\newtheorem{assumption}{Assumption}
\newtheorem{lemma}{Lemma}
\newtheorem{proposition}{Proposition}
\newtheorem{theorem}{Theorem}
\newtheorem{corollary}{Corollary}
\newtheorem{definition}{Definition}
\newcommand{\N}{{\mathbb N}}
\newcommand{\Z}{{\mathbb Z}}
\newcommand{\R}{{\mathbb R}}
\newcommand{\C}{{\mathbb C}}
\newcommand{\E}{{\mathbb E}}
\newcommand{\M}{{\mathbb M}}
\newcommand{\D}{{\mathbb D}}
\newcommand{\Dbar}{\overline{\mathbb D}}
\newcommand{\U}{{\mathcal U}}
\newcommand{\Ubar}{\overline{\mathcal U}}
\newcommand{\cercle}{{\mathbb S}^1}
\newcommand{\eps}{\varepsilon}
\newcommand{\dps}{\displaystyle}
\newcommand{\Ng}{| \! | \! |}
\newcommand{\Nd}{| \! | \! |}
\newcommand{\bxi}{{\boldsymbol \xi}}
\newcommand{\bpi}{{\boldsymbol \Pi}}
\newcommand{\bfeta}{{\boldsymbol \eta}}
\newcommand{\bkappa}{{\boldsymbol \kappa}}
\newcommand{\bdelta}{{\boldsymbol \Delta}}
\begin{document}

\title{Transparent numerical boundary conditions\\
for evolution equations:\\
Derivation and stability analysis}

\author{Jean-Fran\c{c}ois {\sc Coulombel}\thanks{CNRS and Universit\'e de Nantes, Laboratoire de Math\'ematiques 
Jean Leray (UMR CNRS 6629), 2 rue de la Houssini\`ere, BP 92208, 44322 Nantes Cedex 3, France. Email: 
{\tt jean-francois.coulombel@univ-nantes.fr}. Research of the author was supported by ANR project BoND, 
ANR-13-BS01-0009-01.}}
\date{\today}
\maketitle

\begin{abstract}
The aim of this article is to propose a systematic study of transparent boundary conditions for finite difference 
approximations of evolution equations. We try to keep the discussion at the highest level of generality in order 
to apply the theory to the broadest class of problems.

We deal with two main issues. We first derive transparent numerical boundary conditions, that is, we exhibit 
the relations satisfied by the solution to the pure Cauchy problem when the initial condition vanishes outside 
of some domain. Our derivation encompasses discretized transport, diffusion and dispersive equations with 
arbitrarily wide stencils. The second issue is to prove sharp {\it stability} estimates for the initial boundary 
value problem obtained by enforcing the boundary conditions derived in the first step. We focus here on 
discretized transport equations. Under the assumption that the numerical boundary is non-characteristic, 
our main result characterizes the class of numerical schemes for which the corresponding transparent boundary 
conditions satisfy the so-called Uniform Kreiss-Lopatinskii Condition introduced in \cite{gks}. Adapting some 
previous works to the non-local boundary conditions considered here, our analysis culminates in the derivation 
of trace and semigroup estimates for such transparent numerical boundary conditions. Several examples and 
possible extensions are given.
\end{abstract}
\bigskip

\noindent {\small {\bf AMS classification:} 65M06, 65M12, 35L02, 35K05, 35Q41.}

\noindent {\small {\bf Keywords:} evolution equations, difference approximations, transparent boundary conditions, 
stability.}
\bigskip
\bigskip

Throughout this article, we use the notation
\begin{align*}
&\U := \{\zeta \in \C,|\zeta|>1 \}\, ,\quad \D := \{\zeta \in \C,|\zeta|<1 \}\, ,\quad \cercle := \{\zeta \in \C,|\zeta|=1 \} \, ,\\
&\Ubar := \U \cup \cercle \, ,\quad \Dbar := \D \cup \cercle \, .
\end{align*}
We let ${\mathcal M}_{n_1,n_2} ({\mathbb K})$ denote the set of $n_1 \times n_2$ matrices with entries in 
${\mathbb K} = \R \text{ or } \C$. In the case $n_1=n_2=n$, we use the notation ${\mathcal M}_n ({\mathbb K})$ 
for the set of square matrices of size $n$. If $M \in {\mathcal M}_n (\C)$, $M^*$ denotes the conjugate transpose 
of $M$ and $\text{\rm sp}(M)$ denotes the spectrum of $M$. We let $I$ denote the identity matrix or the identity 
operator when it acts on an infinite dimensional space. The subscript in $I_k$ is intended to make the dimension 
$k$ of the underlying vector space $\C^k$ precise when needed. We use the notation $x^* \, y$ for the Hermitian 
product $\sum_i \overline{x_i} \, y_i$ of two vectors $x,y \in \C^n$. For two vectors $x,y \in \C^n$, the quantity 
$\sum_i x_i \, y_i$ is denoted $x \cdot y$; it coincides with the Euclidean product when the vectors have real 
coordinates. The norm of a vector $x \in \C^n$ is $|x| := (x^* \, x)^{1/2}$. The induced matrix norm on 
${\mathcal M}_n (\C)$ is denoted $\| \cdot \|$.

The letter $C$ denotes a constant that may vary from line to line or within the same line. The dependence of 
the constants on the various parameters is made precise throughout the text. If a constant $C$ depends on 
some parameter $\upsilon$, we write either $C_\upsilon$ or $C(\upsilon)$ to make this dependence explicit.

In what follows, we let $d \ge 1$ denote a fixed integer, which will stand for the dimension of the space domain 
$\R^d$ or $\Z^d$ we are considering. We shall use the space $\ell^2$ of square integrable sequences. Sequences 
may be valued in $\C^k$ for some integer $k$. In that case, we write $\ell^2 (\Z^d;\C^k)$ to emphasize that 
sequences are vector valued. Some sequences will be indexed by $\Z^{d-1}$ while some will be indexed by 
$\Z^d$ or a subset of $\Z^d$. We thus introduce some specific notation for the norms. Let $\Delta x_i>0$ for 
$i=1,\dots,d$ be $d$ space steps. We shall make use of the $\ell^2(\Z^{d-1})$ norm that we define as follows: 
for all $v \in \ell^2 (\Z^{d-1})$,
\begin{equation*}
\| v \|_{\ell^2(\Z^{d-1})}^2 := 
\left( \prod_{k=2}^d \Delta x_k \right) \, \sum_{i=2}^d \sum_{j_i\in \Z} |v_{(j_2,\dots,j_d)}|^2 \, .
\end{equation*}
The corresponding scalar product is denoted $\langle \cdot,\cdot \rangle_{\ell^2(\Z^{d-1})}$. Then for all integers 
$m_1 \le m_2$, we set
\begin{equation*}
\Ng u \Nd_{m_1,m_2}^2 := \Delta x_1 \, \sum_{j_1=m_1}^{m_2} \|u_{(j_1,\cdot)} \|_{\ell^2(\Z^{d-1})}^2 \, ,
\end{equation*}
to denote the $\ell^2$ norm on the set $[m_1,m_2] \times \Z^{d-1}$ ($m_1$ may equal $-\infty$ and $m_2$ 
may equal $+\infty$). The corresponding scalar product is denoted $\langle \cdot,\cdot \rangle_{m_1,m_2}$. 
In the particular case $d=1$, the space step is denoted $\Delta x$ and the $\ell^2$ norm on the interval 
$[m_1,m_2]$ reduces to
\begin{equation*}
\Ng u \Nd_{m_1,m_2}^2 := \Delta x \, \sum_{j=m_1}^{m_2} |u_j|^2 \, .
\end{equation*}
The $\ell^2(\Z^{d-1})$ norm reduces to the norm of vectors. Other notation is introduced when needed 
throughout the text or is meant to be self-explanatory.

\section{Introduction}
\label{intro}

\subsection{The context}

We are concerned here with the approximation of partial differential equations of the evolutionary type in 
the whole space $\R^d$. For simplicity, we restrict here to linear partial differential equations with constant 
coefficients of the form
\begin{equation}
\label{edp}
\partial_t v +P(\partial_1,\dots,\partial_d) \, v =0 \, ,
\end{equation}
where the differential operator $P$ is a polynomial expression of the spatial partial derivatives $\partial_1, \dots, 
\partial_d$ (we write $\partial_j$ for the partial derivative with respect to the $j$-th space variable $x_j$ and 
use $\partial_t$ for the partial derivative with respect to time). The differential operator $P$ may have complex 
coefficients so that the above framework encompasses the Schr\"odinger equation, as well as prototype evolution 
equations for real valued functions such as the transport, heat or Airy equation. We restrict here for simplicity 
to the case of {\it scalar} evolution equations of order $1$ in time: the unknown $v$ in \eqref{edp} is either real 
or complex valued and there is no second or higher order time derivative in \eqref{edp}. There would be no 
great effort to consider higher order equations such as the wave or beam equations but the functional framework 
would be slightly different.

When considered on the whole space domain $\R^d$, solving \eqref{edp} usually relies on the Fourier transform 
and we assume that well-posedness holds for \eqref{edp} in $L^2 (\R^d)$. More precisely, we assume
$$
\forall \, \bxi \in \R^d \, ,\quad \text{\rm Re } P(i\, \xi_1,\dots,i\, \xi_d) \ge 0 \, ,
$$
so the solution to the Cauchy problem \eqref{edp} satisfying $v|_{t=0}=v_0$ reads
$$
\forall \, t \ge 0 \, ,\quad v(t,x) =\dfrac{1}{(2\, \pi)^d} \, \int_{\R^d} {\rm e}^{i \, x \cdot \bxi} \, 
{\rm e}^{-t \, P(i\, \xi_1,\dots,i\, \xi_d)} \, \widehat{v_0}(\bxi) \, {\rm d}\bxi \, ,\quad 
\widehat{v_0}(\bxi) := \int_{\R^d} {\rm e}^{-i \, x \cdot \bxi} \, v(x) \, {\rm d}x \, .
$$
Of course, for hyperbolic or dispersive equations, $P(i\, \xi_1,\dots,i\, \xi_d)$ is a purely imaginary number 
and the solution $v$ to \eqref{edp} is also defined for $t \le 0$. However, the theory developed below for 
numerical schemes is mostly restricted to evolution equations in positive times because, even though the 
original partial differential equation \eqref{edp} may be time reversible, most of its finite difference approximations 
will not be so. Hence we consider $t \ge 0$ in what follows, which is of course no restriction when dealing with 
parabolic equations. The above Fourier representation of the solution implies by Plancherel's Theorem the 
uniform bound
$$
\forall \, t \ge 0 \, ,\quad \| v(t,\cdot) \|_{L^2(\R^d)} \le \| v_0 \|_{L^2(\R^d)} \, ,
$$
and we shall be interested below in numerical approximations of \eqref{edp} for which the same $L^2$ decay 
(or conservation) property holds, or a slightly relaxed version of it.
\bigskip

We now turn to the finite difference approximation of \eqref{edp}. We introduce a time step $\Delta t>0$ and 
some space steps $\Delta x_i>0$, $i=1,\dots,d$. The solution $v$ to \eqref{edp} is `approximated' by a 
piecewise constant function
\begin{equation}
\label{function}
u(t,x) := u_j^n \, ,\quad \forall \, (t,x) \in [n \, \Delta t,(n+1) \, \Delta t) \times 
\prod_{k=1}^d \, [j_k \, \Delta x_k,(j_k+1) \, \Delta x_k) \, ,
\end{equation}
where the sequence $(u^n_j)_{n \in \N, j\in \Z^d}$ is defined as the solution to the recurrence relation:
\begin{equation}
\label{cauchy}
\begin{cases}
{\dps \sum_{\sigma=0}^{s+1}} \, Q_\sigma \, u^{n+\sigma} =0 \, ,& n\ge 0 \, ,\\
(u^0,\dots,u^s) = (f^0,\dots,f^s) \in \ell^2 (\Z^d)^{s+1} \, . &
\end{cases}
\end{equation}
In \eqref{cauchy}, each (real or complex) sequence $u^n,\dots,u^{n+s+1}$ is defined on $\Z^d$, that is, we 
consider a pure Cauchy problem on the whole space, and the operators $Q_\sigma$ are given by:
\begin{equation}
\label{defop}
\forall \, \sigma =0,\dots,s+1 \, ,\quad 
Q_\sigma := \sum_{\ell=-r}^p a_{\ell,\sigma}(\Delta t,\Delta x) \, {\bf S}^\ell \, ,\quad 
\text{\rm with } ({\bf S}^\ell u)_j := u_{j+\ell} \, .
\end{equation}
Let us comment a little on \eqref{cauchy}, \eqref{defop}. First of all, the {\it stencil} of the scheme \eqref{cauchy} 
is assumed to be {\it fixed} and finite. More precisely, we consider some {\it fixed} numbers $r_1,\dots,r_d, 
p_1,\dots,p_d \in \N$ and we use in \eqref{defop} the short notation
$$
\sum_{\ell=-r}^p :=\sum_{k=1}^d \, \sum_{\ell_k=-r_k}^{p_k} \, .
$$
Then in \eqref{defop}, the coefficients $a_{\ell,\sigma}(\Delta t,\Delta x)$ are either real or complex numbers 
(depending on whether \eqref{edp} has real or complex coefficients/solutions) and they may depend on the 
time and space steps, which we abbreviate as $(\Delta t,\Delta x)$. Each operator $Q_\sigma$ in \eqref{cauchy} 
thus acts on sequences indexed by the (discrete) spatial variable $j \in \Z^d$, and is bounded on $\ell^2$ for 
any given choice of the discretization parameters $(\Delta t,\Delta x)$ under consideration. The (discrete) time 
variable $n$ enters as a parameter when we apply each operator $Q_\sigma$. For convenience, we write most 
of the time $Q_\sigma \, u_j^{n+\sigma}$ rather than $(Q_\sigma \, u^{n+\sigma})_j$ to denote the application 
of the operator $Q_\sigma$ to the sequence $u^{n+\sigma}$, the resulting sequence being evaluated at the 
space index $j$. For simplicity, we shall not write that the $Q_\sigma$'s depend on the time and space steps 
$(\Delta t,\Delta x)$ even though they will in all applications we have in mind.
\bigskip

Our aim is to cover several situations within the same framework. In particular, we wish to cover {\it explicit} 
schemes (in that case, $Q_{s+1}$ is the identity) that are commonly used for discretizing transport equations 
and which come unavoidably with CFL type restrictions \cite{strang1}, and {\it implicit} discretizations of diffusion 
and/or dispersive equations where the goal is precisely to avoid stringent stability constraints. We therefore 
consider from now on that the time and space steps belong to some set
\begin{equation*}
\bdelta \subset (0,+\infty)^{d+1} \, ,
\end{equation*}
where for instance we wish $\bdelta$ to be the semi-open square $(0,1] \times (0,1]$ when discretizing the 
one-dimensional heat or Schr\"odinger equation by an implicit scheme, and $\bdelta$ can be a semi-open 
interval
$$
\big\{ (\Delta t,\Delta t/\lambda_1,\dots,\Delta t/\lambda_d) \, ,\quad \Delta t \in (0,1] \big\} \, ,
$$
for some {\it fixed} parameters $\lambda_1,\dots,\lambda_d>0$ when we deal with an explicit scheme for a 
transport equation in $\R^d$. In all what follows, we assume that the coefficients $a_{\ell,\sigma}$ in \eqref{defop} 
are defined on $\bdelta$. For convergence purposes, it is tacitly assumed that $\bdelta$ contains at least one 
sequence that converges to zero. Let us keep in mind that some coefficients $a_{\ell,\sigma}$ in \eqref{defop} 
may be unbounded when the parameters $(\Delta t,\Delta x)$ approach the boundary of $\bdelta$. This will 
restrict some of our arguments below.
\bigskip

When implemented on a computer, the numerical scheme \eqref{cauchy} can of course not be used as such 
since it would rely on the storage of an infinite dimensional vector. We must therefore truncate the space domain, 
assuming for instance that the initial data are negligible outside of some domain of interest $\Omega \subset \Z^d$. 
However, as is evidenced by transport equation or by traveling waves in nonlinear equations, there is no reason why 
the solution to \eqref{cauchy} should remain negligible outside of the same fixed domain $\Omega \subset \Z^d$ at 
{\it any} later time $n \in \N$. This prevents in most situations from enforcing the homogeneous Dirichlet conditions 
on the boundary of $\Omega$ in order to compute the restriction to the domain of interest $\Omega$ of the solution 
to \eqref{cauchy}. In this article, we follow a long line of research devoted to the derivation and analysis of 
{\it transparent boundary conditions}, see among many other works 
\cite{aabes,AES,BELV,BMN,ducomet-zlotnik,ehrhardt-arnold,zheng-wen-han,zisowski-ehrhardt}. 
We shall be concerned here with {\it exact} transparent boundary conditions and refer for instance to 
\cite{aabes,abs,ehrhardt,hagstrom,halpern,han-yin,szeftel} for several works dedicated to the construction of 
approximate, more easily implementable, boundary conditions referred to as {\it absorbing}. In this work, we 
wish to understand first what are the relations satisfied by the solution to the pure Cauchy problem \eqref{cauchy} 
when the data $f^0,\dots,f^s$ vanish outside of some domain $\Omega \subset \Z^d$. Then among all these 
relations, we wish to select sufficiently many such that, when combined with the restriction of the recurrence 
relation \eqref{cauchy} to $\Omega$, we get a {\it stable} and hopefully {\it convergent} numerical initial boundary 
value problem. We now make our assumptions on the numerical scheme \eqref{cauchy} precise, and then state 
our main results. Let us already emphasize that this article is devoted to well-posedness issues for transparent 
boundary conditions only. We shall not deal here with consistency relations between the operators $Q_\sigma$ 
in \eqref{cauchy} and the original partial differential equation \eqref{edp}. Such consistency and convergence 
problems will be dealt with in a future work. Observe however that recurrence relations as in \eqref{cauchy} 
also arise in the discretization of higher order (in time) partial differential equations such as the wave equation 
so our framework \eqref{cauchy} is not restricted to the discretization of first order (in time) problems. The main 
technical restriction that we make here is that we focus on {\it scalar} problems: the coefficients $a_{\ell,\sigma}$ 
defining the operators $Q_\sigma$ in \eqref{cauchy} are real or complex numbers and the solutions to 
\eqref{cauchy} are also real or complex valued. The extension of our work to {\it systems} of equations is also 
postponed to a future work.

\subsection{Assumptions on the numerical scheme}

The recurrence relation \eqref{cauchy} is meant to be a defining equation for the sequence $u^{n+s+1} \in 
\ell^2(\Z^d)$, considering that the sequences $u^n,\dots,u^{n+s}$ are known and belong to $\ell^2(\Z^d)$ 
(and therefore \eqref{cauchy} is meant to define uniquely all the $u^n$'s, $n \ge s+1$, in terms of the initial 
data $f^0,\dots,f^s$). For future use, we not only assume that $Q_{s+1}$ is an isomorphism on $\ell^2 (\Z^d)$ 
but make the following slightly stronger assumption.

\begin{assumption}[Solvability of \eqref{cauchy}]
\label{assumption0}
For all discretization parameters $(\Delta t,\Delta x) \in \bdelta$, the operator $Q_{s+1}$ is an isomorphism 
on $\ell^2 (\Z^d)$, or equivalently\footnote{The equivalence between the properties that $Q_{s+1}$ is an 
isomorphism and that its {\it symbol} $\widehat{Q_{s+1}}$ does not vanish on $(\cercle)^d$ is based on the 
fact that we deal with a scalar problem. There are several other places where this restriction plays a role, 
though we shall not always point it out. For systems, one would need to consider the determinant of the 
matrix valued symbol $\widehat{Q_{s+1}}$ to characterize invertibility of $Q_{s+1}$.}:
$$
\forall \, \bkappa \in (\cercle)^d \, ,\quad 
\widehat{Q_{s+1}}(\bkappa) := \sum_{\ell=-r}^p a_{\ell,s+1}(\Delta t,\Delta x) \, \bkappa^\ell \neq 0 \, ,\quad 
\bkappa^\ell :=\kappa_1^{\ell_1} \cdots \kappa_d^{\ell_d} \, .
$$
Moreover, for all $\bfeta=(\eta_2,\dots,\eta_d) \in \R^{d-1}$, $Q_{s+1}$ satisfies the {\rm index condition}:
\begin{equation}
\label{index}
\dfrac{1}{2\, i \, \pi} \, \int_{\cercle} 
\dfrac{\partial_{\kappa_1} \widehat{Q_{s+1}}(\kappa_1,{\rm e}^{i\, \eta_2},\dots,{\rm e}^{i\, \eta_d})}
{\widehat{Q_{s+1}}(\kappa_1,{\rm e}^{i\, \eta_2},\dots,{\rm e}^{i\, \eta_d})} \, {\rm d}\kappa_1 =0 \, .
\end{equation}
\end{assumption}

The index condition \eqref{index} appears in many works devoted to the well-posedness of {\it implicit} 
discretizations of partial differential equations, see for instance \cite{strang2,osher3}. It originates in the 
theory of Toeplitz operators for which we refer to \cite{GF,Nikolski} or \cite[Chapter 27]{lax}. Let us observe 
that the index condition \eqref{index} is not necessary for solving \eqref{cauchy} on the whole space $\Z^d$. 
However, it will play a crucial role when we study well-posedness of the recurrence relation \eqref{cauchy} 
on a `half-space' $\N \times \Z^{d-1}$ or on a strip $[0;N] \times \Z^{d-1}$ in conjunction with the transparent 
boundary conditions derived below. At last, let us observe that Assumption \ref{assumption0} is trivially 
satisfied for {\it explicit} schemes, that is, when $Q_{s+1}$ is the identity.

We now make a crucial assumption on the {\it stability} of the numerical scheme \eqref{cauchy}.

\begin{assumption}[Stability of \eqref{cauchy}]
\label{assumption1}
For all $(\Delta t,\Delta x) \in \bdelta$, there exists a constant $C(\Delta t,\Delta x)>0$ such that for all 
initial data $f^0,\dots,f^s \in \ell^2(\Z^d)$, the solution to \eqref{cauchy} satisfies the uniform bound in 
time
\begin{equation}
\label{bornestabilitecauchy}
\sup_{n \in \N} \, \Ng u^n \Nd_{-\infty,+\infty}^2 \le C(\Delta t,\Delta x) \, \sum_{\sigma=0}^s 
\Ng f^\sigma \Nd_{-\infty,+\infty}^2 \, .
\end{equation}
\end{assumption}

By Fourier analysis, see \cite{RM,livreVB,gko}, it is a well-known fact that Assumption \ref{assumption1} 
can be completely characterized by the fulfillment of a uniform power boundedness property for the 
{\it amplification matrix} associated with \eqref{cauchy}. For future use, we therefore introduce the 
amplification matrix:
\begin{equation}
\label{defA}
{\mathcal A}(\bkappa) := \begin{pmatrix}
-\widehat{Q_s}(\bkappa)/\widehat{Q_{s+1}}(\bkappa) & \dots & \dots & 
-\widehat{Q_0}(\bkappa)/\widehat{Q_{s+1}}(\bkappa) \\
1 & 0 & \dots & 0 \\
 & \ddots & \ddots & \vdots \\
0 & & 1 & 0 \end{pmatrix} \in {\mathcal M}_{s+1}(\C)\, ,
\end{equation}
where the definition of the symbol $\widehat{Q_\sigma}$ for any $\sigma$ is identical to that of 
$\widehat{Q_{s+1}}$ in Assumption \ref{assumption0}, that is:
$$
\forall \, \sigma=0,\dots,s\, ,\quad 
\widehat{Q_\sigma}(\bkappa) := \sum_{\ell=-r}^p a_{\ell,\sigma} (\Delta t,\Delta x) \, \bkappa^\ell \, .
$$
In particular, we do not necessarily restrict the definition of $\widehat{Q_\sigma}$ to the set $(\cercle)^d$. 
The above definition of $\widehat{Q_\sigma}$ makes sense on $(\C \setminus \{ 0\})^d$. However, the 
amplification matrix ${\mathcal A}$ is defined after dividing by $\widehat{Q_{s+1}}$, and this is possible 
by Assumption \ref{assumption0} on a neighborhood of $(\cercle)^d$ in $\C^d$ (in what follows, we 
shall consider situations in which $\widehat{Q_{s+1}}$ may vanish in $\C \times (\cercle)^{d-1}$).

Let us clarify the link between Assumption \ref{assumption1} and the uniform power boundedness property 
for ${\mathcal A}$ since this will play a major role in the analysis below.

\begin{lemma}
\label{lem0}
Let the operators $Q_0,\dots,Q_{s+1}$ satisfy Assumptions \ref{assumption0} and \ref{assumption1}. Then 
the amplification matrix ${\mathcal A}$ in \eqref{defA} satisfies
\begin{equation}
\label{powerbounded}
\forall \, n \in \N \, ,\quad \forall \, \bkappa \in (\cercle)^d \, ,\quad \| {\mathcal A}(\bkappa)^n \|^2 
\le (s+1) \, C(\Delta t,\Delta x) \, ,
\end{equation}
where $C(\Delta t,\Delta x)$ is the same constant as in \eqref{bornestabilitecauchy}. In particular, for all 
$\bxi \in \R^d$, the dispersion relation
\begin{equation}
\label{dispersion}
\sum_{\sigma=0}^{s+1} \widehat{Q_\sigma} ({\rm e}^{i\, \xi_1},\dots,{\rm e}^{i\, \xi_d}) \, z^\sigma =0 \, ,
\end{equation}
has $s+1$ roots $z_1,\dots,z_{s+1}$ in $\Dbar$ and those roots located on $\cercle$ are simple.

For all $(\Delta t,\Delta x) \in \bdelta$, the operators $Q_\sigma$ are thus geometrically regular in the sense 
of \cite{jfcsinum}, see also \cite[Definition 3]{jfcnotes}. In other words the amplification matrix ${\mathcal A}$ 
satisfies the uniform power boundedness \eqref{powerbounded} and if $\underline{\bkappa} \in (\cercle)^d$ 
is such that there exists $\underline{z} \in \cercle \cap \text{\rm sp}({\cal A} (\underline{\bkappa}))$, then there 
exists a (unique) holomorphic function $\lambda$ on a neighborhood ${\mathcal W}$ of $\underline{\bkappa}$ 
in $\C^d$, such that
\begin{align*}
& \lambda(\underline{\bkappa}) =\underline{z} \, ,\\
& \det \big( z\, I +{\mathcal A}(\bkappa) \big)=\vartheta (\bkappa,z) \, \big( z+\lambda(\bkappa) \big) \, ,\quad 
\forall \, (z,\bkappa) \in \C \times {\mathcal W} \, ,
\end{align*}
with $\vartheta$ a holomorphic function of $(\bkappa,z)$ on ${\mathcal W} \times \C$ such that $\vartheta 
(\underline{\bkappa},\underline{z}) \neq 0$, and there exists a vector valued holomorphic function $E(\bkappa) 
\in \C^{s+1}$on ${\mathcal W}$ that satisfies
\begin{align*}
& E(\underline{\bkappa}) \neq 0 \, ,\\
&\forall \, \bkappa \in {\mathcal W} \, ,\quad {\mathcal A}(\bkappa) \, E(\bkappa) =\lambda(\bkappa) \, E(\bkappa) \, .
\end{align*}
\end{lemma}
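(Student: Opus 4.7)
The plan is to go to the Fourier side in $\bkappa$, combine Plancherel with \eqref{bornestabilitecauchy} to obtain an integral bound on the powers of $\mathcal{A}(\bkappa)$, upgrade it pointwise by a localisation argument to get \eqref{powerbounded}, and then read off the spectral and holomorphic statements from \eqref{powerbounded} and the companion structure of $\mathcal{A}$. After a space Fourier transform of \eqref{cauchy} and dividing by $\widehat{Q_{s+1}}(\bkappa)$ (allowed by Assumption \ref{assumption0}), the recurrence becomes $\widehat{U^{n+1}}(\bkappa) = \mathcal{A}(\bkappa) \, \widehat{U^n}(\bkappa)$ with $\widehat{U^n}(\bkappa) := (\widehat{u^{n+s}}(\bkappa), \ldots, \widehat{u^n}(\bkappa))^\top$. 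Plancherel turns \eqref{bornestabilitecauchy} into $\sup_n \int |\widehat{u^n}|^2 \le C(\Delta t,\Delta x) \sum_\sigma \int |\widehat{f^\sigma}|^2$ on $(\cercle)^d$, and summing over the $s+1$ coordinates of $\widehat{U^n}$ yields, for every $\widehat{U^0} \in L^2((\cercle)^d;\C^{s+1})$,
\begin{equation*}
\int_{(\cercle)^d} |\mathcal{A}(\bkappa)^n \, \widehat{U^0}(\bkappa)|^2 \, \mathrm{d}\bkappa
\le (s+1) \, C(\Delta t,\Delta x) \int_{(\cercle)^d} |\widehat{U^0}(\bkappa)|^2 \, \mathrm{d}\bkappa \, .
\end{equation*}

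To upgrade this integrated estimate to the pointwise bound \eqref{powerbounded}, I would fix $\underline{\bkappa} \in (\cercle)^d$ and $V \in \C^{s+1}$ and test against $\widehat{U^0}(\bkappa) := \mathbf{1}_{B_\eps(\underline{\bkappa})}(\bkappa) \, V$; the corresponding initial data are easily checked to lie in $\ell^2(\Z^d)^{s+1}$. Dividing by $|B_\eps(\underline{\bkappa})|$ and letting $\eps \to 0$, Lebesgue differentiation together with continuity of $\bkappa \mapsto \mathcal{A}(\bkappa)^n$ on $(\cercle)^d$ (ensured by Assumption \ref{assumption0}) deliver $|\mathcal{A}(\underline{\bkappa})^n V|^2 \le (s+1) \, C(\Delta t,\Delta x) \, |V|^2$, which is exactly \eqref{powerbounded}.

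The spectral claims then follow quickly. A direct computation for the companion matrix $\mathcal{A}$ gives the identity $\widehat{Q_{s+1}}(\bkappa) \, \det(zI - \mathcal{A}(\bkappa)) = \sum_{\sigma=0}^{s+1} \widehat{Q_\sigma}(\bkappa) \, z^\sigma$, so the roots of \eqref{dispersion} are exactly the eigenvalues of $\mathcal{A}(\bkappa)$. Eigenvalues outside $\Dbar$ would contradict \eqref{powerbounded} by geometric growth, and a nontrivial Jordan block attached to an eigenvalue on $\cercle$ would produce polynomial growth; hence all eigenvalues lie in $\Dbar$ and those on $\cercle$ are semisimple. Since the companion matrix $\mathcal{A}(\bkappa)$ is cyclic, each eigenvalue has geometric multiplicity one, and semisimplicity on $\cercle$ therefore amounts to simplicity.

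Finally, since $\underline{z}$ is a simple eigenvalue of $\mathcal{A}(\underline{\bkappa})$, the point $(-\underline{z},\underline{\bkappa})$ is a simple root of the jointly holomorphic function $(z,\bkappa) \mapsto \det(zI + \mathcal{A}(\bkappa))$ (holomorphicity being ensured by Assumption \ref{assumption0}). The holomorphic implicit function theorem then provides the holomorphic function $\lambda$ with $\lambda(\underline{\bkappa}) = \underline{z}$ and the claimed factorisation with a nonvanishing cofactor $\vartheta$. The companion structure of $\mathcal{A}$ finally supplies an explicit holomorphic eigenvector, $E(\bkappa) := (\lambda(\bkappa)^s, \lambda(\bkappa)^{s-1}, \ldots, \lambda(\bkappa), 1)^\top$, which is nowhere zero and satisfies $\mathcal{A}(\bkappa) E(\bkappa) = \lambda(\bkappa) E(\bkappa)$. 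The main technical obstacle I anticipate is the localisation step: one must verify carefully that the indicator-valued test function genuinely corresponds to admissible $\ell^2$ initial data and that the pointwise limit is legitimate; all remaining items are structural consequences of the companion form.
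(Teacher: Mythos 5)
Your proof is correct and takes essentially the same route as the paper: the paper simply cites \cite[Chapter 2]{jfcnotes} for the uniform power bound \eqref{powerbounded} (your Plancherel-plus-localization argument is exactly the standard proof behind that citation), and then, just as you do, identifies \eqref{dispersion} with the characteristic polynomial of the companion matrix ${\mathcal A}$, excludes eigenvalues outside $\Dbar$ and nontrivial Jordan blocks on $\cercle$ via \eqref{powerbounded}, uses the one-dimensionality of eigenspaces of companion matrices to get simplicity, and concludes with the holomorphic eigenvalue and eigenvector. No gaps; your explicit eigenvector $(\lambda(\bkappa)^s,\dots,\lambda(\bkappa),1)^\top$ is a concrete way of carrying out the paper's final step.
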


\noindent The interested reader will observe that the situation encountered here is a particular case of the 
more general geometric regularity condition of \cite{jfcsinum} where several eigenvalues may `cross' at 
$\bkappa =\underline{\bkappa}$. Here the form of the companion matrix ${\mathcal A}$ makes such a 
crossing compatible with the uniform power boundedness \eqref{powerbounded} only if the crossing takes 
place inside the unit disk $\D$ (and not on the boundary $\cercle$).

\begin{proof}[Proof of Lemma \ref{lem0}]
The proof of the uniform bound \eqref{powerbounded} is given in \cite[Chapter 2]{jfcnotes} for one-dimensional 
explicit schemes ($d=1$, $Q_{s+1}=I$). The extension to multidimensional implicit schemes is rather 
straightforward so we omit it. The dispersion relation \eqref{dispersion} is the characteristic polynomial of 
${\mathcal A}$, which implies that the zeroes of \eqref{dispersion} are located in $\Dbar$. Furthermore, if 
one such root belongs to $\cercle$, then the multiplicity of $z$ as a root of \eqref{dispersion} must equal 
the dimension of the eigenspace of ${\mathcal A}(\bkappa)$ associated with $z$. Since eigenspaces of 
companion matrices have dimension $1$, $z$ is a simple root of \eqref{dispersion}. In particular, the 
corresponding eigenvalue of ${\mathcal A}$ depends locally holomorphically on $\bkappa$ and one can 
determine an eigenvector that also depends locally holomorphically on $\bkappa$. As noted in 
\cite[Lemma 7]{jfcnotes}, the geometric regularity of the operators $Q_\sigma$ follows automatically 
from the stability Assumption \ref{assumption1} in the scalar case.
\end{proof}

The uniform bound in time we require on \eqref{cauchy} is the (slightly relaxed) analogue of the $L^2$ decay 
property satisfied by the solutions to the partial differential equation \eqref{edp}. For numerical schemes with 
only one time level, that is when $s=0$, then \eqref{dispersion} is a first degree polynomial equation in $z$ 
whose only root is
$$
-\widehat{Q_0} ({\rm e}^{i\, \xi_1},\dots,{\rm e}^{i\, \xi_d})/\widehat{Q_1} ({\rm e}^{i\, \xi_1},\dots,{\rm e}^{i\, \xi_d}) 
\, ,
$$
and in that case, stability (in the sense of the fulfillment of \eqref{bornestabilitecauchy}) is equivalent to the 
fact that solutions to the recurrence formula
$$
Q_1 \, u^{n+1} +Q_0 \, u^n =0 \, ,
$$
satisfy the $\ell^2$ decay property
$$
\forall \, n \in \N \, ,\quad \Ng u^{n+1} \Nd_{-\infty,+\infty} \le \Ng u^n \Nd_{-\infty,+\infty} \, .
$$
In that case, the constant $C(\Delta t,\Delta x)$ does not even depend on $(\Delta t,\Delta x)$ and can be chosen 
to be $1$. However, when considering schemes with several time levels, there is no obvious generalization of 
this $\ell^2$ decay since the best one can hope for is to have an energy functional that is equivalent to the norm 
on $\ell^2(\Z^d)^{s+1}$ and that is nonincreasing for solutions to \eqref{cauchy}, see \cite{strikwerda-wade}. 
But such an energy will strongly depend on the numerical scheme under consideration, and we therefore state 
Assumption \ref{assumption1} in this rather simplified form which encompasses a wide class of difference 
schemes.

What is important here is that we require the iteration \eqref{cauchy} to satisfy a {\it uniform} bound in time, 
though we allow the constant $C(\Delta t,\Delta x)$ to depend `badly' on the time and space steps. For 
instance we allow $C(\Delta t,\Delta x)$ to be of the form $1/\Delta t$, which would be terrible for convergence 
purposes, but our framework excludes numerical schemes that only satisfy a `Lax-Richtmyer' bound of the form
$$
 \Ng u^{n+1} \Nd_{-\infty,+\infty} \le (1+C\, \Delta t) \, \Ng u^n \Nd_{-\infty,+\infty} \, ,
$$
which in the end gives rise to an exponential bound in time for the $\ell^2$ norm. Of course, the verification 
of Assumption \ref{assumption1} may restrict the set of discretization parameters $\bdelta$, and this is one 
reason for introducing such a set rather than considering the most general case $\bdelta =(0,1]^{d+1}$. 
Assumption \ref{assumption1} also rules out incorporating `lower order terms' in the numerical scheme 
\eqref{cauchy}. We restrict in some sense to the principal part of \eqref{cauchy} (just like the $L^2$ decay 
property for \eqref{edp} is not invariant under lower order perturbations).

Our last main assumption appears in several anterior works on fully discrete initial boundary value problems 
for hyperbolic equations, see among other works \cite{goldberg,goldberg-tadmor,kreiss1,gks,osher1}. It 
is also used in many {\it explicit} computations for deriving and analyzing discrete transparent boundary 
conditions for various prototype equations, see for instance 
\cite{ehrhardt-arnold,aabes,zisowski-ehrhardt,ehrhardt,ducomet-zlotnik,zheng-wen-han,BELV}. We 
state two possible versions of this assumption in order to highlight when the strong form of Assumption 
\ref{assumption2'} is necessary and when one can only use the weak form of Assumption \ref{assumption2}. 
In the defining equation \eqref{defA-d} below, we use the decomposition $j=(j_1,j') \in \Z \times \Z^{d-1}$ for 
any integer $j \in \Z^d$. In particular $r'$ stands for $(r_2,\dots,r_d)$ and so on.

\begin{assumption}[Noncharacteristic discrete boundary (weak form)]
\label{assumption2}
For $\ell_1=-r_1,\dots,p_1$, $z \in \C$ and $\bfeta \in \R^{d-1}$, let us define
\begin{equation}
\label{defA-d}
a_{\ell_1}(z,\bfeta) :=\sum_{\sigma=0}^{s+1} z^\sigma \, \sum_{\ell'=-r'}^{p'} 
a_{(\ell_1,\ell'),\sigma}(\Delta t,\Delta x) \, {\rm e}^{i \, \ell' \cdot \bfeta} \, .
\end{equation}
Then $a_{-r_1}$ and $a_{p_1}$ do not vanish on $\U \times \R^{d-1}$.
\end{assumption}

\begin{assumption}[Noncharacteristic discrete boundary (strong form)]
\label{assumption2'}
The functions $a_{-r_1}$ and $a_{p_1}$ defined in \eqref{defA-d} do not vanish on $\Ubar \times \R^{d-1}$.
\end{assumption}

In what follows, we shall always consider numerical schemes that satisfy Assumption \ref{assumption2}. 
This is sufficient for deriving the transparent boundary conditions for the scheme \eqref{cauchy}. However, 
in the stability analysis of transparent boundary conditions, it is convenient to allow $z \in \U$ to be 
arbitrarily close to the unit circle $\cercle$. This is the reason why for showing our main stability result 
(Theorem \ref{thm2} below), we shall make the stronger Assumption \ref{assumption2'}. The rather 
inconvenient feature of Assumption \ref{assumption2'} is that it excludes numerical schemes that are 
first based on a semi-discretization in space of \eqref{edp}, giving a system of differential equations 
of the form
$$
\dfrac{{\rm d}u_j}{{\rm d}t} =\sum_{\ell=-r}^p \widetilde{a}_\ell(\Delta x) \, u_{j+\ell} \, ,
$$
and then on the application of the Crank-Nicolson rule, giving rise to the numerical scheme
$$
\dfrac{1}{\Delta t} \, (u_j^{n+1}-u_j^n) =\dfrac{1}{2} \, \left( \sum_{\ell=-r}^p \widetilde{a}_\ell(\Delta x) 
\, u_{j+\ell}^{n+1} +\sum_{\ell=-r}^p \widetilde{a}_\ell(\Delta x) \, u_{j+\ell}^n \right) \, .
$$
In that case, Assumption \ref{assumption2'} is not satisfied because either $a_{-r_1}$ or $a_{p_1}$ (or 
even both if $r_1$ and $p_1$ are nonzero) vanishes at $z=-1$. However, Assumption \ref{assumption2'} 
is commonly satisfied in the discretization of hyperbolic equations by explicit methods and by some implicit 
schemes for parabolic equations too, see Section \ref{sect:examples} for some examples.

We make one last, mostly technical, assumption, which restricts a little the class of numerical schemes 
that we consider, but that we might be able to relax later on, to the price of some future refinements in the 
proofs below. Some remarks on Assumption \ref{assumption3} are made later on in the text when we use it.

\begin{assumption}[Technical restriction on the scheme]
\label{assumption3}
One of the following two conditions holds:
\begin{description}
\item[{\rm (i)}] $Q_{s+1}$ is the identity operator (the scheme \eqref{cauchy} is explicit), and for all 
$\bfeta \in \R^{d-1}$ there holds
\begin{equation*}
\begin{cases}
\sum_{\ell'=-r'}^{p'} a_{(-r_1,\ell'),s}(\Delta t,\Delta x) \, {\rm e}^{i \, \ell' \cdot \bfeta} \neq 0 \, ,& 
\text{\rm if } r_1>0 \, ,\\
\sum_{\ell'=-r'}^{p'} a_{(p_1,\ell'),s}(\Delta t,\Delta x) \, {\rm e}^{i \, \ell' \cdot \bfeta} \neq 0 \, ,& 
\text{\rm if } p_1>0 \, .
\end{cases}
\end{equation*}

\item[{\rm (ii)}] $Q_{s+1}$ is not the identity (the scheme \eqref{cauchy} is implicit), and for all 
$\bfeta \in \R^{d-1}$, there holds
\begin{equation*}
\sum_{\ell'=-r'}^{p'} a_{(-r_1,\ell'),s+1}(\Delta t,\Delta x) \, {\rm e}^{i \, \ell' \cdot \bfeta} \neq 0 \, ,\quad 
\sum_{\ell'=-r'}^{p'} a_{(p_1,\ell'),s+1}(\Delta t,\Delta x) \, {\rm e}^{i \, \ell' \cdot \bfeta} \neq 0 \, .
\end{equation*}
\end{description}
\end{assumption}

At this stage, the above assumptions incorporate many standard finite difference discretizations of 
hyperbolic, parabolic and dispersive partial differential equations. Several examples are discussed 
in Section \ref{sect:examples}. Our first goal in the following paragraph is to show that the derivation 
of transparent boundary conditions is indeed independent of the nature of the underlying partial 
differential equation, but only relies on the properties encoded in Assumptions \ref{assumption0}, 
\ref{assumption1} and \ref{assumption2} (we emphasize again that Assumption \ref{assumption3} 
aims mainly at simplifying one of the arguments below but should not be necessary in the most 
general possible framework). We shall then study the stability of the numerical scheme combining 
\eqref{cauchy} with transparent boundary conditions.

\subsection{Derivation of transparent numerical boundary conditions}

In this paper, we focus on the derivation and analysis of transparent numerical boundary conditions 
when the space domain $\Z^d$ is `truncated' only on one side. Of course, this is still far from a realistic 
implementation in a computer, but our goal is to highlight, in the case of one single boundary, the main 
features of \eqref{cauchy} that have an impact on the stability of the transparent numerical boundary 
conditions we construct below. The first step of the analysis is therefore to consider the numerical 
scheme \eqref{cauchy} with initial data that vanish on a `half-space', and to derive the relations satisfied 
by the solution to \eqref{cauchy} in that case. This part of the analysis only uses the weak form of the 
assumption that the discrete boundary is noncharacteristic (Assumption \ref{assumption2} rather than 
Assumption \ref{assumption2'}). Our first main result reads as follows.

\begin{theorem}
\label{thm1}
Let Assumptions \ref{assumption0}, \ref{assumption1}, \ref{assumption2} and \ref{assumption3} be 
satisfied. Then there exists a sequence $(\bpi_n)_{n \in \N}$ of bounded operators\footnote{In the case 
$d=1$, the $\bpi_n$'s are just square matrices of size $p_1+r_1$ since our definition of $\ell^2(\Z^{d-1}; 
\C^{p_1+r_1})$ then reduces to $\C^{p_1+r_1}$.} on $\ell^2(\Z^{d-1};\C^{p_1+r_1})$ that satisfies
\begin{align*}
&\forall \, \delta>0 \, ,\quad 
\sum_{n \in \N} \dfrac{1}{(1+\delta)^n} \, \| \bpi_n \|_{{\mathcal B}(\ell^2(\Z^{d-1}))} <+\infty \, ,\quad 
\text{\rm (growth condition)} \, ,\\
&\forall \, n \in \N \, ,\quad \bpi_n =\sum_{m=0}^n \bpi_m \, \bpi_{n-m} \, ,\quad \text{\rm (algebraic constraints)} \, ,
\end{align*}
and such that for all initial data $f^0,\dots,f^s \in \ell^2(\Z^d)$ in \eqref{cauchy} verifying
$$
\forall \, \sigma=0,\dots,s\, ,\quad \forall \, j_1 \le p_1 \, ,\quad f_{(j_1,\cdot)}^\sigma =0 \, ,
$$
then the solution to \eqref{cauchy} satisfies
\begin{equation}
\label{relations}
\forall \, n \in \N \, ,\quad \forall \, j_1 \le 0 \, ,\quad 
\sum_{m=0}^n \bpi_{n-m} \, \begin{pmatrix}
u_{(j_1+p_1,\cdot)}^m \\
\vdots \\
u_{(j_1+1-r_1,\cdot)}^m \end{pmatrix} =0 \, .
\end{equation}

When the scheme is explicit (case {\rm (i)} in Assumption \ref{assumption3}), the operator $\bpi_0$ is given 
by
$$
\bpi_0 =\begin{pmatrix}
0 & 0 \\
0 & I_{r_1} \end{pmatrix} \, .
$$
\end{theorem}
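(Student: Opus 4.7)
The plan is to pass to frequency space via the Z-transform in $n$ and the tangential Fourier transform in $j' \in \Z^{d-1}$, reducing the problem on $j_1 \le 0$ to a homogeneous one-dimensional linear recurrence whose $\ell^2$-admissible solutions are characterized by a Dunford-Taylor spectral projector. Concretely, for $z \in \U$ and $\bfeta \in \R^{d-1}$, Assumption \ref{assumption1} makes
$$\check u(z, j_1, \bfeta) := \sum_{n \ge 0} z^{-n} \sum_{j' \in \Z^{d-1}} u^n_{(j_1, j')} \, e^{-i j' \cdot \bfeta}$$
well-defined, and transforming \eqref{cauchy} gives the one-dimensional recurrence $\sum_{\ell_1 = -r_1}^{p_1} a_{\ell_1}(z, \bfeta) \, \check u(z, j_1 + \ell_1, \bfeta) = \check F(z, j_1, \bfeta)$ with $a_{\ell_1}$ as in \eqref{defA-d}. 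The source $\check F$ arises from the initial data, and since each $f^\sigma$ vanishes for $j_1 \le p_1$ while the first-coordinate stencil of $Q_\sigma$ is $[-r_1, p_1]$, one checks $\check F(z, j_1, \bfeta) = 0$ for $j_1 \le 0$.

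On $j_1 \le 0$, Assumption \ref{assumption2} allows division by $a_{p_1}(z, \bfeta) \ne 0$, recasting the homogeneous recurrence as a linear dynamical system $W(z, j_1 + 1, \bfeta) = \mathcal{N}(z, \bfeta) \, W(z, j_1, \bfeta)$ on $\C^{p_1 + r_1}$, where $\mathcal{N}$ is the companion matrix associated with the recurrence and
$$W(z, j_1, \bfeta) := \bigl(\check u(z, j_1 + p_1, \bfeta), \ldots, \check u(z, j_1 + 1 - r_1, \bfeta)\bigr)^T.$$
The eigenvalues of $\mathcal{N}$ are exactly the $\mu$-roots of the dispersion polynomial $\sum_\sigma z^\sigma \widehat{Q_\sigma}(\mu, e^{i\bfeta}) = 0$; by Lemma \ref{lem0} none lies on $\cercle$ for $z \in \U$, and a continuity/argument-principle computation combining the index condition \eqref{index} with the large-$|z|$ asymptotics controlled by Assumption \ref{assumption3} pins their count at exactly $r_1$ in $\D$ and $p_1$ outside $\Ubar$, uniformly on $\U \times \R^{d-1}$. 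The $\ell^2$-summability of $\check u(z, \cdot, \bfeta)$ on $j_1 \le 0$ then forces $W(z, j_1, \bfeta)$ to lie in the $p_1$-dimensional subspace $\mathcal{E}^s(z, \bfeta)$ spanned by the eigenvectors of $\mathcal{N}$ with $|\mu| > 1$, for every $j_1 \le 0$.

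Introduce the Dunford-Taylor spectral projector
$$\bpi(z, \bfeta) := \frac{1}{2 i \pi} \oint_{\cercle} \bigl(\mu I - \mathcal{N}(z, \bfeta)\bigr)^{-1} d\mu,$$
a genuine idempotent with $\ker \bpi(z, \bfeta) = \mathcal{E}^s(z, \bfeta)$, holomorphic in $z \in \U$ and continuous, $2\pi$-periodic in $\bfeta$. Assumption \ref{assumption3} ensures that $\bpi_\infty(\bfeta) := \lim_{z \to \infty} \bpi(z, \bfeta)$ exists and is uniformly bounded, so $\bpi$ admits an absolutely convergent Laurent expansion $\bpi(z, \bfeta) = \sum_{n \ge 0} z^{-n} \widehat{\bpi_n}(\bfeta)$ on $\{|z| \ge 1\}$ with $\sup_{n, \bfeta} \|\widehat{\bpi_n}(\bfeta)\| < \infty$. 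Define $\bpi_n$ to be the tangential Fourier multiplier on $\ell^2(\Z^{d-1}; \C^{p_1 + r_1})$ with symbol $\widehat{\bpi_n}(\bfeta)$: the uniform bound yields the growth condition $\sum_n (1 + \delta)^{-n} \|\bpi_n\| < \infty$, and the idempotency $\bpi(z, \bfeta)^2 = \bpi(z, \bfeta)$ translates coefficient-by-coefficient into the algebraic constraint $\bpi_n = \sum_{m=0}^n \bpi_m \, \bpi_{n-m}$. Expanding $\bpi(z, \bfeta) \, W(z, j_1, \bfeta) = 0$ as a Cauchy product of Laurent series in $z^{-1}$ and inverting the tangential Fourier transform produces exactly \eqref{relations} for every $j_1 \le 0$.

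For the explicit case $Q_{s+1} = I$, the constant Laurent coefficient $\bpi_0$ equals $\bpi_\infty$. As $z \to \infty$, the dispersion polynomial is dominated by $z^{s+1}$, and Assumption \ref{assumption3}(i) guarantees that exactly $r_1$ eigenvalues of $\mathcal{N}$ tend to $0$ while $p_1$ tend to $\infty$. Since the Vandermonde-type eigenvectors $(\mu^{p_1+r_1-1}, \ldots, \mu, 1)^T$ of a companion matrix concentrate on the last coordinate as $\mu \to 0$ and on the first as $\mu \to \infty$, the spectral projector onto the subspace corresponding to eigenvalues inside $\cercle$ converges to the block matrix $\begin{pmatrix} 0 & 0 \\ 0 & I_{r_1} \end{pmatrix}$, yielding the announced closed form for $\bpi_0$. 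The main delicate point in the whole argument is the precise eigenvalue count for $\mathcal{N}(z, \bfeta)$ throughout $\U \times \R^{d-1}$ together with the existence and boundedness of the limit $\bpi_\infty$; both rest on a careful combination of Lemma \ref{lem0}, the index condition \eqref{index}, and Assumption \ref{assumption3}, with the implicit case requiring a separate asymptotic analysis near $|z| = \infty$ based on the nonvanishing of the leading coefficients of $a_{\pm r_1}, a_{\pm p_1}$.
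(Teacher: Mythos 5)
Your overall route is the same as the paper's: Z/Laplace transform in $n$ and tangential Fourier transform in $j'$, reduction to the companion matrix $\M(z,\bfeta)$, the eigenvalue count $r_1$ in $\D$ versus $p_1$ in $\U$ (Lemma \ref{lem2}), the observation that $\ell^2$-decay as $j_1\to-\infty$ forces the state vector into the unstable generalized eigenspace, the Riesz projector, its Laurent expansion in $z^{-1}$, and the reinterpretation of the coefficients as tangential Fourier multipliers. That skeleton is correct. The genuine gap is at the step you yourself flag as delicate and then dispose of in one sentence: the existence of $\lim_{z\to\infty}\bpi(z,\bfeta)$ in the explicit case and the identification of $\bpi_0$. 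Your argument is that the Vandermonde eigenvectors ``concentrate on the last coordinate as $\mu\to 0$ and on the first as $\mu\to\infty$,'' hence the projector converges to $\begin{pmatrix}0&0\\ 0&I_{r_1}\end{pmatrix}$. As soon as $r_1\ge 2$ (or $p_1\ge 2$) this does not prove anything: all $r_1$ stable eigenvectors converge to the \emph{same} vector (the last basis vector), all normalized unstable ones to the same vector as well, so the diagonalizing Vandermonde matrix degenerates and the limit of an $r_1$-dimensional invariant subspace, let alone of the projector $V\,\mathrm{diag}(I_{r_1},0)\,V^{-1}$, cannot be read off from the limits of the individual eigenvectors. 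One must quantify the collapse: the paper uses the Puiseux expansions $\kappa\sim c\,z^{-1/r_1}$ for the stable branch and $\kappa\sim d\,z^{1/p_1}$ for the unstable one, and then estimates the blocks of $W(z,\bfeta)$ and of $W(z,\bfeta)^{-1}$ through the explicit inverse formula \eqref{formuleinv} to obtain \eqref{limprojecteur}. (An alternative is to prove convergence of $\E^s(z,\bfeta)$ and $\E^u(z,\bfeta)$ in the Grassmannian, e.g.\ via divided differences of the Vandermonde columns, and use that the limits are transversal; but some such argument is indispensable, and it is exactly what Assumption \ref{assumption3} is there for.) Without this, the causality of the expansion is obtained, but the closed form of $\bpi_0$ claimed in the theorem is not proved. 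The implicit case you defer to ``a separate asymptotic analysis''; there the matrix $\M(z,\bfeta)$ itself converges and the index condition \eqref{index} plus Assumption \ref{assumption3} give the spectral splitting of the limit, so that part is easy, but it should be said.

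A secondary inaccuracy: you assert that the Laurent expansion of $\bpi(z,\bfeta)$ converges absolutely on $\{|z|\ge 1\}$ with $\sup_{n,\bfeta}\|\widehat{\bpi_n}(\bfeta)\|<\infty$, and deduce the growth condition from that uniform bound. This is not justified under the hypotheses of Theorem \ref{thm1}: the projector is holomorphic on $\U\cup\{\infty\}$ but may be singular at points of $\cercle$ (this is precisely the glancing phenomenon; uniform control up to $|z|=1$ is what Corollary \ref{cor1} provides only under the non-glancing condition). Fortunately the stated growth condition only requires, for every $\delta>0$, summability of $(1+\delta)^{-n}\|\bpi_n\|$, which follows from Cauchy's estimates on the circle $|z|=1+\delta$ together with periodicity and compactness in $\bfeta$, i.e.\ from \eqref{boundpin}; so replace the overclaim by this weaker, correct statement.
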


Let us observe that $\bpi_0$ is a projector because of the algebraic constraints written for $n=0$. When the 
scheme is implicit (case {\rm (ii)} in Assumption \ref{assumption3}), the operator $\bpi_0$ does not have such 
a nice expression as in the explicit case. In particular, $\bpi_0$ is most of the time a genuine {\it nonlocal} 
operator in the tangential variable $j' \in \Z^{d-1}$, meaning that the value of the sequence $\bpi_0 \, v$ at 
an index $j' \in \Z^{d-1}$ does not depend on finitely many values of $v$ close to $j'$ but on the whole sequence 
$v$. (Of course, in the one-dimensional case $d=1$, the $\bpi_n$'s are matrices of size $p_1+r_1$ and the 
discussion on non-locality becomes irrelevant.) It should be noted that in the explicit case, the $\bpi_n$'s for 
$n \ge 1$ are also nonlocal (unless some specific - though unlikely - cancellation appears). Such nonlocality 
is a very common feature of transparent boundary conditions for partial differential equations in several space 
dimensions, see e.g. \cite{ab,szeftel1} for the case of the Schr\"odinger equation.

For $n=0,\dots,s$, the relations in \eqref{relations} are trivially satisfied because the initial data $f^0,\dots,f^s$ 
vanish for $j_1 \le p_1$. Hence the relations in \eqref{relations} start to be really meaningful for $n \ge s+1$ but 
for reasons that will arise later on, it is useful to consider the whole set of relations \eqref{relations} indexed by 
$n \in \N$ and not only by $n \ge s+1$.

Theorem \ref{thm1} provides with the set of relations \eqref{relations} that are satisfied by the solution to 
\eqref{cauchy} when the initial data have support in $\{ j_1 >p_1 \}$. In what follows, we are concerned 
with the numerical scheme that is obtained by combining the iteration \eqref{cauchy} on a half-space, that 
is, we truncate the space domain in one spatial direction, in conjunction with discrete transparent boundary 
conditions obtained from \eqref{relations}. More precisely, we are going to consider the following numerical 
scheme:
\begin{equation}
\label{transparent0}
\begin{cases}
{\dps \sum_{\sigma=0}^{s+1}} \, Q_\sigma \, u_j^{n+\sigma} =\Delta t \, F_j^{n+s+1} \, ,& 
n\ge 0 \, ,\quad j_1 \ge 1 \, ,\\
{\dps \sum_{m=0}^{n+s+1}} \, \bpi_{n+s+1-m} \, \begin{pmatrix}
u_{(p_1,\cdot)}^m \\
\vdots \\
u_{(1-r_1,\cdot)}^m \end{pmatrix} = g^{n+s+1} \, , & n\ge 0 \, ,\\
(u_j^0,\dots,u_j^s) = (f_j^0,\dots,f_j^s) \, , & j_1 \ge 1-r_1 \, ,
\end{cases}
\end{equation}
where the $\bpi_n$'s are the tangential operators given by Theorem \ref{thm1} and whose precise definition 
is given (on the Fourier side) by \eqref{defpin}.

The discrete initial boundary value problem \eqref{transparent0} is meant to describe, for zero interior 
and boundary source terms $F$ and $g$, the dynamics of \eqref{cauchy} when restricted to the half-space 
$\{ j_1 \ge 1-r_1 \}$. There is however a little discrepancy because in Theorem \ref{thm1} we have considered 
initial data that vanish for $j_1 \le p_1$, while, due to the stencil of the operators $Q_\sigma$, we consider 
in \eqref{transparent0} a solution $(u_j^n)$ that is indexed by $j_1 \ge 1-r_1$. In particular, the initial data 
$f^0,\dots,f^s$ in \eqref{transparent0} need not vanish for $j_1 \le p_1$. We also consider the possibility of 
nonzero interior and boundary forcing terms $F$ and $g$ in view of proving stability estimates that will later 
be useful for convergence purposes.

The first obvious question when considering \eqref{transparent0} is to determine whether there exists a 
unique solution $u$ for given source terms $F,g,f$ (respectively interior, boundary forcing terms, and initial 
data). It turns out that existence and uniqueness of a solution to \eqref{transparent0} is {\it automatic} in the 
framework of Theorem \ref{thm1}. There is however a little price to pay because of the formulation of the 
numerical boundary conditions. Since $\bpi_0$ is a projector that is not the identity (unless $p_1=0$), it 
cannot be an isomorphism on $\ell^2(\Z^{d-1})$ and there must necessarily be algebraic constraints on 
the source terms $(g^n)_{n \ge s+1}$ in \eqref{transparent0} for proving existence of a solution to 
\eqref{transparent0}. These constraints are made clear in the following result\footnote{Lemma \ref{lem1} 
is purely algebraic and does not require the vector space $E$ to be a Banach or Hilbert space, nor the 
linear operators to be bounded.}.

\begin{lemma}
\label{lem1}
Let $E$ be a vector space and let $(P_n)_{n \in \N}$ be a sequence of linear operators on $E$ such that
$$
\forall \, n \in \N \, ,\quad P_n =\sum_{m=0}^n P_m \, P_{n-m} \, .
$$
Let $(y_n)_{n \in \N}$ be a sequence with values in $E$. Then there exists a sequence $(x_n)_{n \in \N}$ 
with values in $E$ that satisfies
\begin{equation}
\label{convolution}
\forall \, n \in \N \, ,\quad \sum_{m=0}^n P_{n-m} \, x_m =y_n \, ,
\end{equation}
if and only if there holds
\begin{equation}
\label{compatibilitebord}
\forall \, n \in \N \, ,\quad y_n =\sum_{m=0}^n P_{n-m} \, y_m \, .
\end{equation}
\end{lemma}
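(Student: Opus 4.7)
The plan is to observe that the hypothesis on $(P_n)$ says exactly that the sequence $(P_n)$ is idempotent under the (Cauchy) convolution product $\ast$ on $E$-valued sequences, i.e. $P \ast P = P$. In this language, \eqref{convolution} reads $P \ast x = y$, and \eqref{compatibilitebord} reads $P \ast y = y$. So the lemma becomes the tautology ``$y \in \operatorname{Im}(P \ast \cdot)$ if and only if $y$ is fixed by $P \ast \cdot$'', which holds for any idempotent.

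More concretely, I would proceed as follows. For the sufficiency direction, simply take $x_n := y_n$. Then \eqref{convolution} is just \eqref{compatibilitebord}, so existence is immediate; there is nothing to compute. Note that this also shows $\bpi_0$ is idempotent (set $n=0$ in the algebraic constraint: $P_0 = P_0^2$), which is what one would expect since its range is exactly the admissible set of source terms $g^{s+1}$.

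For the necessity direction, assume \eqref{convolution} holds and substitute $y_m = \sum_{k=0}^m P_{m-k} \, x_k$ into the right-hand side of \eqref{compatibilitebord}:
\begin{equation*}
\sum_{m=0}^n P_{n-m} \, y_m
= \sum_{m=0}^n \sum_{k=0}^m P_{n-m} \, P_{m-k} \, x_k
= \sum_{k=0}^n \Bigl( \sum_{m=k}^n P_{n-m} \, P_{m-k} \Bigr) \, x_k \, .
\end{equation*}
Setting $j = m-k$ in the inner sum, the bracket becomes $\sum_{j=0}^{n-k} P_{n-k-j} \, P_j$, which by the algebraic constraint on $(P_n)$ equals $P_{n-k}$. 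Therefore
\begin{equation*}
\sum_{m=0}^n P_{n-m} \, y_m = \sum_{k=0}^n P_{n-k} \, x_k = y_n \, .
\end{equation*}

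The only mildly delicate point is the index swap in the necessity argument, which is a routine rearrangement of a double sum over the triangle $0 \le k \le m \le n$. Everything is purely algebraic, with no analytic or topological input, which matches the footnote's observation that $E$ need not carry any topology.
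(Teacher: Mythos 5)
Your proof is correct; the sufficiency direction (take $x_n:=y_n$) is identical to the paper's, but your necessity direction takes a genuinely different and shorter route. The paper proves that direction by induction on $n$: it isolates $y_{n+1}-P_0\,y_{n+1}$, repeatedly substitutes \eqref{convolution} for terms of the form $P_{n+1-p}\,x_0$, and reorganizes indices until \eqref{compatibilitebord} at level $n+1$ appears, the argument being framed around the observation that a vector lies in the range of the projector $P_0$ exactly when it is fixed by $P_0$. You instead use the algebraic constraints globally in one stroke: substituting \eqref{convolution} into $\sum_{m=0}^n P_{n-m}\,y_m$, swapping the finite double sum over the triangle $0\le k\le m\le n$, and recognizing the inner sum $\sum_{j=0}^{n-k} P_{n-k-j}\,P_j$ as the constraint at index $n-k$ (the constraint $P_N=\sum_{m=0}^N P_m\,P_{N-m}$ is a sum over all ordered pairs with indices summing to $N$, so your reversed ordering is literally the same sum and no commutativity of the $P_n$'s is needed). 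This is exactly the associativity identity $P\ast(P\ast x)=(P\ast P)\ast x$ for the convolution action, and it makes the lemma the tautology that $y$ lies in the image of an idempotent iff it is fixed by it — a cleaner and non-inductive argument. What the paper's inductive formulation buys is that it proceeds level by level in $n$, mirroring the time-stepping way the compatibility conditions are actually exploited later (the solution to \eqref{transparent0} is constructed inductively in $n$ in the proof of Proposition \ref{prop1}); as a proof of Lemma \ref{lem1} itself, your computation is the more economical one, and it is equally faithful to the footnote's point that no topology on $E$ is involved.
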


\noindent Enforcing algebraic constraints as in \eqref{compatibilitebord} for the source terms in 
\eqref{transparent0}, the unique solvability of \eqref{transparent0} can be stated as follows.

\begin{proposition}
\label{prop1}
Let Assumptions \ref{assumption0}, \ref{assumption1}, \ref{assumption2} and \ref{assumption3} be 
satisfied. Let $f^0,\dots,f^s \in \ell^2$ be the initial data for \eqref{transparent0}, and let $(g^n)_{n \ge s+1}$ 
be a sequence in $\ell^2(\Z^{d-1};\C^{p_1+r_1})$ of boundary source terms for \eqref{transparent0}. With 
the tangential operators $(\bpi_n)_{n \ge 0}$ given in Theorem \ref{thm1}, let us define\footnote{If the initial 
data $f^0,\dots,f^s$ for \eqref{transparent0} vanish for $j_1 \le p_1$, then $g^0, \dots, g^s$ vanish.} for 
$n=0,\dots,s$, :
$$
g^n := \sum_{m=0}^n \, \bpi_{n-m} \, \begin{pmatrix}
f_{(p_1,\cdot)}^m \\
\vdots \\
f_{(1-r_1,\cdot)}^m \end{pmatrix} \, ,
$$
and let us further assume that the following compatibility conditions are satisfied:
\begin{equation}
\label{compatibilitegn}
\forall \, n \ge 0 \, ,\quad g^n =\sum_{m=0}^n \bpi_{n-m} \, g^m \, .
\end{equation}
Then for all sequence of interior source terms $(F^n)_{n \ge s+1}$ with values in $\ell^2$, there exists a 
unique sequence $(u^n)_{n \in \N}$ with values in $\ell^2$ solution to \eqref{transparent0}.
\end{proposition}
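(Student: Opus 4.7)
The plan is to proceed by strong induction on $n \ge 0$: starting from $u^\sigma = f^\sigma$ for $\sigma = 0, \dots, s$ (which fulfills the boundary relation at times $0, \dots, s$ by the very definition of $g^0, \dots, g^s$ in the statement), I construct the time slice $u^{n+s+1}$ at each step. The interior equation in \eqref{transparent0} rearranges as
$$Q_{s+1}\, u^{n+s+1}_j = G^{n+s+1}_j \, ,\quad j_1 \ge 1 \, ,\qquad G^{n+s+1}_j := \Delta t \, F^{n+s+1}_j - \sum_{\sigma=0}^s Q_\sigma \, u^{n+\sigma}_j \, ,$$
while the boundary relation at time $n+s+1$ becomes
$$\bpi_0 \, V^{n+s+1} = H^{n+s+1} \, ,\qquad H^{n+s+1} := g^{n+s+1} - \sum_{m=0}^{n+s} \bpi_{n+s+1-m} \, V^m \, ,$$
where $V^m := (u^m_{(p_1,\cdot)}, \dots, u^m_{(1-r_1,\cdot)})$. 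Both $G^{n+s+1}$ and $H^{n+s+1}$ are then known from previous iterates.

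The crux is an algebraic compatibility check. The algebraic constraints of Theorem \ref{thm1} at $n=0$ force $\bpi_0$ to be a projector, so $\bpi_0 V = H^{n+s+1}$ is solvable only if $(I - \bpi_0)\, H^{n+s+1} = 0$. I would establish this by combining the compatibility condition \eqref{compatibilitegn} at index $n+s+1$, which rewrites $(I - \bpi_0)\, g^{n+s+1} = \sum_{m=0}^{n+s} \bpi_{n+s+1-m}\, g^m$, with the inductive hypothesis $g^m = \sum_{k=0}^m \bpi_{m-k}\, V^k$ for $m \le n+s$, and the identity $\bpi_N = \sum_{j=0}^N \bpi_{N-j}\, \bpi_j$. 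Swapping the order of summation produces the simplification $\sum_{j=0}^{N-1} \bpi_{N-j}\, \bpi_j = (I - \bpi_0)\, \bpi_N$, which after substitution yields $(I - \bpi_0)\, H^{n+s+1} = 0$.

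Once compatibility is secured, the combined interior--boundary system must be solved for $u^{n+s+1}$. In the explicit case (Assumption \ref{assumption3}(i)) the interior equation directly gives $u^{n+s+1}_j = G^{n+s+1}_j$ for $j_1 \ge 1$, pinning down the first $p_1$ tangential blocks of $V^{n+s+1}$, and the explicit form $\bpi_0 = \begin{pmatrix} 0 & 0 \\ 0 & I_{r_1} \end{pmatrix}$ supplied by Theorem \ref{thm1} reads off the $r_1$ ghost blocks as the last components of $H^{n+s+1}$, with $\ell^2$ membership automatic. In the implicit case (Assumption \ref{assumption3}(ii)) I would tangentially Fourier-transform the system: for each $\bfeta$, the interior equation becomes a scalar banded recurrence in $j_1$ whose characteristic polynomial $\kappa_1^{r_1}\, \widehat{Q_{s+1}}(\kappa_1, {\rm e}^{i\eta_2}, \dots, {\rm e}^{i\eta_d})$ has nonvanishing leading and trailing coefficients by Assumption \ref{assumption3}(ii) and, together with Assumption \ref{assumption0} and the index condition \eqref{index}, has exactly $r_1$ roots in $\D$ and $p_1$ in $\U$. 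The $\ell^2$ solutions of the homogeneous recurrence on $\{j_1 \ge 1-r_1\}$ thus form an $r_1$-dimensional space, cut down uniquely by the $r_1$-dimensional constraint $\bpi_0 V = H^{n+s+1}$; the required transversality is built into the construction of $\bpi_0$ in Theorem \ref{thm1} via the ``outgoing'' spectral data of $\widehat{Q_{s+1}}$ encoded by \eqref{defpin}.

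Uniqueness at each induction step follows by applying the same analysis to the difference of two candidate solutions, for which $G$ and $H$ vanish by the inductive hypothesis, so that only the zero solution survives. The full convolution relation at time $n+s+1$ is then restored by the very definition of $H^{n+s+1}$, and the induction closes. The hardest part is precisely the implicit-case transversality: it rests on the explicit spectral description of $\bpi_0$ through \eqref{defpin}, and without that one would have to invoke a Wiener--Hopf factorization of $\kappa_1 \mapsto \widehat{Q_{s+1}}(\kappa_1, \kappa')$ and an abstract index count to justify the complementarity of $\mathrm{range}(\bpi_0)$ with the $r_1$-dimensional space of $\ell^2$ ghost-cell extensions of the homogeneous recurrence.
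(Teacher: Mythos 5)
Your overall architecture is the same as the paper's: induction on the time level, reduction at each step to the one-step problem $Q_{s+1}u^{n+s+1}_j=G^{n+s+1}_j$ ($j_1\ge 1$) coupled with $\bpi_0 V^{n+s+1}=H^{n+s+1}$, and a compatibility check showing $(I-\bpi_0)\,H^{n+s+1}=0$. Your direct computation of this last identity (using \eqref{compatibilitegn}, the previously satisfied boundary relations and the constraint $\bpi_n=\sum_{m=0}^n\bpi_m\,\bpi_{n-m}$) is correct and plays exactly the role of the Lemma~\ref{lem1}-type manipulation invoked in the paper; the explicit case and the uniqueness argument (difference of two solutions, trace in $\ker\Pi_0(\bfeta)\cap\E^s(\infty,\bfeta)=\{0\}$) are also in order, granted the identification of $\Pi_0(\bfeta)$ as the spectral projector of the matrix $\M(\infty,\bfeta)$ in \eqref{limMinfty}, which Step 2 of the proof of Theorem~\ref{thm1} provides.

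The genuine gap is the \emph{existence} part in the implicit case. Your argument only discusses the homogeneous recurrence: the dimension count (``$r_1$-dimensional space of $\ell^2$ solutions, cut down uniquely by the $r_1$-dimensional constraint'') is an injectivity statement, and since the unknown lives on the half-line $\{j_1\ge 1-r_1\}$, injectivity does not yield surjectivity; the interior source $G^{n+s+1}=\Delta t\,F^{n+s+1}-\sum_\sigma Q_\sigma u^{n+\sigma}$ is nonzero in general, and you never produce an $\ell^2$ solution of the inhomogeneous problem. This is where the paper's proof does its real work: after lifting the boundary datum by a sequence supported on the trace cells, it solves $Q_{s+1}^\sharp(\bfeta)\,\widehat w_{j_1}=\widehat F_{j_1}$, $\Pi_0(\bfeta)(\widehat w_{p_1},\dots,\widehat w_{1-r_1})^T=0$ by an explicit variation-of-constants formula built on the stable/unstable splitting of $\M(\infty,\bfeta)$, and uses the exponential dichotomy bounds $\|(\M(\infty,\bfeta)\Pi_0(\bfeta))^k\|\le C\,r^k$ and $\|(\M(\infty,\bfeta)^{-1}(I-\Pi_0(\bfeta)))^k\|\le C\,r^k$, \emph{uniform in} $\bfeta$ by periodicity and analyticity, together with an $\ell^1\star\ell^2$ convolution estimate. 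That uniformity is what lets one invert the tangential Fourier transform and conclude $u^{n+s+1}\in\ell^2$ when $d\ge 2$; your sketch is silent on both the construction and the uniform bound. Finally, your closing phrase about the ``complementarity of $\mathrm{range}(\bpi_0)$ with the space of $\ell^2$ ghost-cell extensions'' is misstated: the traces of homogeneous $\ell^2$ solutions are not complementary to $\mathrm{range}(\Pi_0(\bfeta))$, they \emph{coincide} with it (both equal $\E^s(\infty,\bfeta)$, while $\ker\Pi_0(\bfeta)=\E^u(\infty,\bfeta)$); it is precisely this identification, not a Wiener--Hopf index argument, that makes the boundary condition solvable once a particular $\ell^2$ solution of the inhomogeneous interior equation has been constructed.
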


One drawback of Theorem \ref{thm1} is that the family of operators $(\bpi_n)_{n \ge 0}$ is not uniquely 
defined, even when enforcing the growth condition and the algebraic constraints. There is however one 
choice that seems to be more natural than others, and this is the one we make in the defining equation 
\eqref{defpin}. Other formulations of the transparent boundary conditions are proposed in Section 
\ref{sect:2}, some of which being analogous to the one encoded in \eqref{transparent0}, and others 
being closer to those used in 
\cite{AES,aabes,zisowski-ehrhardt,ehrhardt-arnold,ehrhardt,ducomet-zlotnik,zheng-wen-han,BELV} etc. 
We discuss how one can pass from one formulation to the other depending on the space dimension $d$.

Once we know that \eqref{transparent0} has a unique solution, there remains to determine whether 
this solution depends continuously on the data. This is a {\it stability} problem, and the last requirement 
for Hadamard well-posedness of \eqref{transparent0}. Stability is to be understood as proving that a 
certain norm of the solution $u$ to \eqref{transparent0} can be estimated in terms of some appropriate 
norms of the source terms in \eqref{transparent0}. This is where the nature of the underlying partial 
differential equation \eqref{edp} comes back into play since what we have in mind is proving an estimate 
for \eqref{transparent0} that is compatible with the `continuous' limit $\Delta t, \Delta x \rightarrow 0$. 
However, since the scale invariance properties of the transport, heat, Schr\"odinger or Airy equations 
are widely different, it seems hopeless at this point to encompass all possible applications within the 
same framework. In what follows we explore one possible notion of stability that is related to the theory 
of {\it hyperbolic} initial boundary value problems. This means that the underlying partial differential 
equation \eqref{edp} we have in mind is a transport equation (with ${\bf a} \in \R^d$ a fixed vector):
$$
\partial_t v +\sum_{j=1}^d {\bf a}_j \, \partial_j v=0 \, .
$$
Dispersive equations such as the Schr\"odinger or Airy equations will be addressed in a near future with 
stability estimates compatible with the ones discussed in \cite{audiard} for the continuous problem.

\subsection{Characterization of strong stability}

The notion of stability that we discuss is inspired from \cite{gks} and is rather restrictive in the sense 
that it requires controlling the trace of the solution to \eqref{transparent0} with possibly non-homogeneous 
boundary forcing terms $g$. From now on, we consider that the ratios $\Delta t/\Delta x_i$, $i=1,\dots,d$, 
are constant, which means that the set $\bdelta$ of discretization parameters is a semi-open interval
$$
\bdelta =\big\{ (\Delta t,\Delta t/\lambda_1,\dots,\Delta t/\lambda_d) \, ,\quad \Delta t \in (0,1] \big\} \, ,
$$
where $\lambda_1,\dots,\lambda_d$ are fixed positive numbers. We also assume that the coefficients 
$a_{\ell,\sigma}$ in \eqref{defop} only depend on the time and space steps through the ratios $\lambda_1, 
\dots, \lambda_d$. In other words, keeping $\Delta t$ as the only free small parameter, we assume 
that the operators $Q_\sigma$ are {\it independent} of $\Delta t$, which means that the scheme 
\eqref{transparent0} is also independent of $\Delta t$ (since one can rename the interior source term 
$\Delta t \, F_j^{n+s+1}$ fictitiously as $\widetilde{F}_j^{n+s+1}$). The scaling invariance $\Delta t/\Delta x_i 
=\text{\rm cst}$ is reminiscent of the scaling invariance $(t,x) \rightarrow (\alpha \, t,\alpha \, x)$ of the 
underlying hyperbolic equation we have in mind. The estimate \eqref{stabilitenumibvp} below is also 
the direct analogue of the weighted in time estimates discussed in \cite[Chapter 4]{benzoni-serre} and 
that are also invariant by the scaling $(t,x) \rightarrow (\alpha \, t,\alpha \, x)$ (the Laplace parameter 
$\gamma$ below being then rescaled as $\gamma \rightarrow \gamma/\alpha$). As already mentioned, 
we plan to adapt the continuous {\it dispersive} estimates of \cite{audiard} to the framework of finite 
difference schemes and transparent boundary conditions in a near future. We now introduce the 
following terminology.

\begin{definition}[Strong stability \cite{gks}]
\label{defstab1}
The finite difference approximation \eqref{transparent0} is said to be \emph{strongly stable} if there exists 
a constant $C$ such that for all $\gamma>0$ and all $\Delta t \in \, (0,1]$, the solution\footnote{Here we 
tacitly assume that the boundary forcing terms and vanishing initial data satisfy the compatibility conditions 
\eqref{compatibilitegn} so that a solution to \eqref{transparent0} does exist.} $(u_j^n)$ to \eqref{transparent0} 
with $(f_j^0) =\dots =(f_j^s) =0$ satisfies the estimate:
\begin{multline}
\label{stabilitenumibvp}
\dfrac{\gamma}{\gamma \, \Delta t+1} \, \sum_{n\ge s+1} \Delta t \, {\rm e}^{-2\, \gamma \, n\, \Delta t} \, 
\Ng u^n \Nd_{1-r_1,+\infty}^2 
+\sum_{n\ge s+1} \, \sum_{j_1=1-r_1}^{p_1} \Delta t \, {\rm e}^{-2\, \gamma \, n\, \Delta t} \, 
\| u_{(j_1,\cdot)}^n \|_{\ell^2(\Z^{d-1})}^2 \\
\le C_1 \left\{ \dfrac{\gamma \, \Delta t+1}{\gamma} \, \sum_{n \ge s+1} \Delta t \, 
{\rm e}^{-2\, \gamma \, n\, \Delta t} \, \Ng F^n \Nd_{1,+\infty}^2 
+\sum_{n\ge s+1} \, \Delta t \, {\rm e}^{-2\, \gamma \, n\, \Delta t} \, \| g^n \|_{\ell^2(\Z^{d-1})}^2 \right\} \, .
\end{multline}
\end{definition}

The main point in the estimate \eqref{stabilitenumibvp} is that the $\ell^2_{n,j'}$ norm of the trace of 
the solution is estimated with the same `weight' as $\gamma$ times the $\ell^2_{n,j}$ norm. Moreover, 
there is no loss of `derivative' from the source terms in \eqref{transparent0} to the solution in the estimate 
\eqref{stabilitenumibvp}. We exhibit below a necessary and sufficient condition for \eqref{transparent0} 
to be strongly stable. The analysis is inspired from and bears quite some resemblance with \cite{jfc}, 
see also \cite{trefethen3}. Before stating our main result, we introduce some more terminology.

\begin{definition}[Non-glancing scheme]
\label{def2}
Let the operators $Q_0,\dots,Q_{s+1}$ satisfy Assumptions \ref{assumption0} and \ref{assumption1}. The 
scheme \eqref{cauchy} is said to be \emph{non-glancing} if for all $\underline{\bkappa} \in (\cercle)^d$ such 
that there exists $\underline{z} \in \cercle \cap \text{\rm sp}({\cal A} (\underline{\bkappa}))$, then the 
holomorphic eigenvalue $\lambda$ of ${\mathcal A}$ given in Lemma \ref{lem1} satisfies
$$
\dfrac{\partial \lambda}{\partial \kappa_1} (\underline{\bkappa}) \neq 0 \, .
$$
\end{definition}

As discussed in \cite{jfc}, several standard discretizations of transport equations such as the upwind, 
Lax-Friedrichs and Lax-Wendroff schemes are non-glancing. At the opposite, and as already noticed 
in \cite{trefethen3}, the leap-frog and Crank-Nicolson approximations of the transport equation admit 
glancing wave packets (while the underlying partial differential equation does not !). Such examples 
are discussed in Section \ref{sect:examples}. Our main result reads as follows. We emphasize that 
from now on we enforce the stronger Assumption \ref{assumption2'} rather than its weak version 
(Assumption \ref{assumption2}).

\begin{theorem}
\label{thm2}
Let Assumptions \ref{assumption0}, \ref{assumption1}, \ref{assumption2'} and \ref{assumption3} be 
satisfied. Then the scheme \eqref{transparent0} is strongly stable in the sense of Definition \ref{defstab1} 
if and only if the scheme \eqref{cauchy} for the pure Cauchy problem is non-glancing.

Assume furthermore that for all $\bxi \in \R^d$, the roots to the dispersion relation \eqref{dispersion} are 
simple. Then if the scheme \eqref{cauchy} is non-glancing, there exists a constant $C>0$ such that for 
all $\gamma>0$ and all $\Delta t \in \, (0,1]$, the solution\footnote{Here again we tacitly assume that 
the boundary forcing terms and the (nonzero) initial data satisfy the compatibility conditions 
\eqref{compatibilitegn} so that a solution to \eqref{transparent0} does exist.} $(u_j^n)$ to 
\eqref{transparent0} with $f^0,\dots,f^s \in \ell^2$ satisfies the estimate:
\begin{multline}
\label{estimation}
\sup_{n \in \N} {\rm e}^{-2\, \gamma \, n\, \Delta t} \, \Ng u^n \Nd_{1-r_1,+\infty}^2 
+\sum_{n\ge s+1} \, \sum_{j_1=1-r_1}^{p_1} \Delta t \, {\rm e}^{-2\, \gamma \, n\, \Delta t} \, 
\| u_{(j_1,\cdot)}^n \|_{\ell^2(\Z^{d-1})}^2 \\
\le C \left\{ \sum_{\sigma=0}^s \Ng f^\sigma \Nd_{1-r_1,+\infty}^2 
+\dfrac{\gamma \, \Delta t+1}{\gamma} \, \sum_{n \ge s+1} \Delta t \, 
{\rm e}^{-2\, \gamma \, n\, \Delta t} \, \Ng F^n \Nd_{1,+\infty}^2 
+\sum_{n\ge s+1} \, \Delta t \, {\rm e}^{-2\, \gamma \, n\, \Delta t} \, \| g^n \|_{\ell^2(\Z^{d-1})}^2 \right\} \, .
\end{multline}
\end{theorem}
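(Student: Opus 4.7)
The plan is to combine a Laplace--Fourier transform in the time and tangential variables $(n,j')$ with the Kreiss normal-mode analysis, viewing \eqref{transparent0} as a spatial resolvent equation in the normal variable $j_1$ with spectral parameters $(z,\bfeta) \in \Ubar \times \R^{d-1}$, where $z = \exp((\gamma+i\tau)\,\Delta t)$. The relevant characteristic symbol is $\widehat{P}(z,\kappa_1,\bfeta) := \sum_{\sigma=0}^{s+1} z^\sigma \, \widehat{Q_\sigma}(\kappa_1,\mathrm{e}^{i\bfeta})$, which by Assumption \ref{assumption2'} has exactly $p_1+r_1$ roots $\kappa_1 \in \C\setminus\{0\}$, counted with multiplicity, uniformly for $(z,\bfeta) \in \Ubar \times \R^{d-1}$. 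A standard index argument based on Assumption \ref{assumption0} and Lemma \ref{lem0} shows that for $z \in \U$, $p_1$ of these roots lie in $\D$ (stable modes, giving $\ell^2$ solutions as $j_1 \to +\infty$) while $r_1$ lie in $\U$ (unstable modes). Rewriting the resolvent equation as a first-order recursion in $j_1$ with state vector of size $p_1+r_1$ produces a spatial amplification matrix $M(z,\bfeta)$ whose invariant splitting defines the stable subspace $\mathcal{E}^s(z,\bfeta)$ (dimension $p_1$) and the unstable subspace $\mathcal{E}^u(z,\bfeta)$ (dimension $r_1$).

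The key observation --- and the reason transparent boundary conditions plug seamlessly into the GKS framework --- is that the Laplace--Fourier symbol $\widehat{\bpi}(z,\bfeta)$ of the convolution operator $\sum_m \bpi_{n-m}$ is, by the very construction of the $\bpi_n$'s in Theorem \ref{thm1}, a multiple of the spectral projector onto $\mathcal{E}^u(z,\bfeta)$ parallel to $\mathcal{E}^s(z,\bfeta)$: indeed, the relations \eqref{relations} are precisely those satisfied by the traces of $\ell^2$ extensions of the pure Cauchy problem, which on the Fourier side forces membership in $\mathcal{E}^s$. Consequently the boundary equation $\widehat{\bpi}(z,\bfeta)\,X = \widehat{g}$ is solvable precisely when $\widehat{g} \in \mathcal{E}^u(z,\bfeta)$ (this is the content of the compatibility \eqref{compatibilitegn}), and combined with the requirement that $\widehat{u}(\cdot,z,\bfeta) \in \mathcal{E}^s(z,\bfeta)$ it uniquely and continuously determines the trace.

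The characterization of strong stability thus reduces to obtaining uniform bounds on this resolvent as $|z| \downarrow 1$, which is precisely the Uniform Kreiss--Lopatinskii Condition: the spectral projector onto $\mathcal{E}^u$ along $\mathcal{E}^s$ must extend continuously and with uniformly bounded norm to $z \in \cercle$. I expect this step --- linking UKLC to the non-glancing condition --- to be the main obstacle. At a frequency $\underline{\bkappa} = (\underline{\kappa_1},\mathrm{e}^{i\bfeta_0}) \in (\cercle)^d$ with $\underline{z} \in \cercle \cap \mathrm{sp}(\mathcal{A}(\underline{\bkappa}))$, Lemma \ref{lem0} provides a holomorphic eigenvalue $\lambda(\bkappa)$ with $\lambda(\underline{\bkappa}) = \underline{z}$. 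Inverting the implicit equation $\lambda(\kappa_1,\mathrm{e}^{i\bfeta}) = z$ for $\kappa_1$ near $\underline{\kappa_1}$ is possible by the holomorphic implicit function theorem exactly when $\partial_{\kappa_1} \lambda(\underline{\bkappa}) \neq 0$, i.e.\ in the non-glancing case; then the stable and unstable roots separate cleanly across $\cercle$ and the projector onto $\mathcal{E}^u$ extends continuously up to the unit circle. In the glancing case $\partial_{\kappa_1} \lambda(\underline{\bkappa}) = 0$, a stable root and an unstable root coalesce at $\underline{\kappa_1}$ in a Puiseux fashion, the subspaces $\mathcal{E}^s$ and $\mathcal{E}^u$ merge, the projector blows up, and one produces Kreiss-type wave packets concentrated near $(\underline{z},\underline{\bkappa})$ that violate \eqref{stabilitenumibvp} --- the standard glancing obstruction already analyzed in \cite{jfc, trefethen3}.

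Once UKLC is established in the non-glancing case, the weighted trace and interior estimates of \eqref{stabilitenumibvp} follow from Parseval's identity in $(n,j')$ combined with the uniform $\ell^2$-in-$j_1$ resolvent bound, as in \cite{gks, jfc}, the only adaptation being that the boundary symbol is now genuinely nonlocal but is nonetheless controlled in $L^\infty$ in $\bfeta$ thanks to UKLC. To upgrade to the stronger estimate \eqref{estimation} (which handles nonzero initial data and the sup-in-time norm), I would first extend $(f^0,\dots,f^s)$ trivially by $0$ to $\Z^d$, solve the pure Cauchy problem \eqref{cauchy} --- Assumption \ref{assumption1} giving a sup-in-$n$ $\ell^2$ bound --- and subtract this solution from $u$ to reduce to the zero-initial-data case with a modified boundary source whose weighted norm is controlled by the initial data; here the simplicity of the roots of the dispersion relation yields uniform diagonalizability of $\mathcal{A}$, hence sharp semigroup bounds without polynomial loss. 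The sup-in-$n$ piece of \eqref{estimation} is then recovered by combining the trace estimate with a duality/energy argument for the inhomogeneous scheme, in the spirit of \cite{jfc}.
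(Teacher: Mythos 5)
Your normal-mode setup has the spatial splitting inverted, and the error is not merely notational. By Lemma \ref{lem2}, for $z\in\U$ the companion matrix $\M(z,\bfeta)$ has $r_1$ eigenvalues in $\D$ and $p_1$ in $\U$, so the space of solutions decaying as $j_1\to+\infty$ is $\E^s(z,\bfeta)$ of dimension $r_1$ (not $p_1$), while the transparent relations \eqref{relations} read, on the Laplace--Fourier side, $\Pi^s(z,\bfeta)\,U_{j_1}=0$, i.e.\ membership of the trace vector in $\E^u$ (the modes of the exterior, left, extension), not in $\E^s$; the boundary symbol is $\Pi^s$, the projector onto $\E^s$ along $\E^u$, and the compatibility \eqref{compatibilitegn} places $\widehat g$ in its range. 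As you wrote it, the kernel of the boundary symbol coincides with the decaying subspace, so the homogeneous resolvent problem would admit every decaying mode and your claim that the boundary equation ``uniquely and continuously determines the trace'' collapses. The correct configuration is precisely what makes the Uniform Kreiss--Lopatinskii condition immediate: $\Pi^s$ restricted to $\E^s$ is the identity, and the only real issue is whether $\Pi^s$ extends boundedly/holomorphically up to $\cercle$, which by the block-structure theorem of \cite{jfcsinum} (Theorem \ref{thmjfc1} and Corollary \ref{cor1}, which also give the exponential decay of the $\Pi_n$'s --- an implicit-function-theorem sketch alone does not) holds if and only if the scheme is non-glancing. Relatedly, the necessity direction cannot be dismissed as ``the standard glancing obstruction'': since the projector itself constitutes the boundary condition, its blow-up does not by itself contradict \eqref{stabilitenumibvp}. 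One must exhibit an explicit violation of the resolvent bound \eqref{estimW1}, which the paper does through Puiseux expansions of the glancing block and a Vandermonde choice of the source \eqref{defFeps}--\eqref{defalphaell}; moreover the case $m$ odd with $(m-1)/2$ stable eigenvalues in the glancing block survives that first computation and requires a refined two-eigenvector construction in which a possible cancellation must be excluded (the nonvanishing of \eqref{argfinal}). None of this appears in your sketch.

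The reduction to zero initial data is also too optimistic. If you subtract the solution of the pure Cauchy problem, the modified boundary datum involves the time-weighted $\ell^2$ norm of the traces $v^n_{(j_1,\cdot)}$, $1-r_1\le j_1\le p_1$, of that auxiliary solution; Assumption \ref{assumption1} only gives $\sup_n\Ng v^n\Nd^2\le C\sum_\sigma\Ng f^\sigma\Nd^2$, and converting this into $\sum_n\Delta t\,{\rm e}^{-2\gamma n\Delta t}\|v^n_{(j_1,\cdot)}\|_{\ell^2(\Z^{d-1})}^2$ costs at least a factor $1/(\gamma\,\Delta x_1)$, destroying the $\gamma$-uniform structure of \eqref{estimation}. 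What is needed is an auxiliary solution of the interior scheme with the same initial data that satisfies a $\gamma$-independent time-integrated \emph{trace} bound by $\sum_\sigma\Ng f^\sigma\Nd^2$; this is exactly Theorem \ref{thmjfcX} (existence of dissipative boundary conditions, from \cite{jfcX}), and it is there --- not in a diagonalizability argument for ${\mathcal A}$, which Assumption \ref{assumption1} already handles --- that the simplicity of the roots of \eqref{dispersion} is used. Finally, the sup-in-$n$ term of \eqref{estimation} is obtained not by duality but by the multiplier technique of \cite{jfcX}, which transfers verbatim to the present nonlocal boundary conditions once strong stability controls the trace of the remainder. So while your skeleton (Laplace--Fourier reduction, UKLC, GKS machinery) matches the paper's, the three load-bearing steps --- the identification of the boundary symbol as $\Pi^s$ with the correct dimension count, the explicit glancing instability construction, and the trace-controlled auxiliary solution for nonzero data --- are respectively inverted, missing, and replaced by an argument that loses a factor of $\gamma$.
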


Let us observe that in \eqref{estimation}, we could have added for free the quantity
$$
\dfrac{\gamma}{\gamma \, \Delta t+1} \, \sum_{n\ge 0} \Delta t \, {\rm e}^{-2\, \gamma \, n\, \Delta t} \, 
\Ng u^n \Nd_{1-r_1,+\infty}^2 \, ,
$$
on the left hand side, since it is controlled by the stronger `semigroup' norm
$$
\sup_{n \in \N} {\rm e}^{-2\, \gamma \, n\, \Delta t} \, \Ng u^n \Nd_{1-r_1,+\infty}^2 \, ,
$$
uniformly in $\gamma$ and $\Delta t$. The main point in Theorem \ref{thm2} is that for non-glancing 
schemes, one can control both semigroup and trace norms of the solution to \eqref{transparent0}, 
including for the case of non-homogeneous boundary conditions. Such strong stability estimates 
are relevant for two purposes: first they allow for a convergence analysis relying on suitable 
consistency estimates, and second such strong stability estimates persist under any small 
modification of the numerical boundary conditions. This means that one may be able to prove 
first stability and then convergence for a class of {\it absorbing} boundary conditions that is based 
on a sufficiently good approximation of the tangential operators $(\bpi_n)_{n\ge 0}$. We plan to 
investigate this issue in a future work, for instance when the operators $\bpi_n$ are approximated 
by the so-called sum of exponentials, see e.g. \cite{AES}.
\bigskip

The plan of the paper is as follows. In Section \ref{sect:2}, we derive the transparent numerical 
boundary conditions and prove Theorem \ref{thm1}. We also discuss alternative formulations of the 
transparent numerical boundary conditions and make some constructions more explicit in the special 
case $p_1=r_1=1$ which occurs in many practical examples. In Section \ref{sect:3}, we show solvability 
for the numerical scheme \eqref{transparent0} and prove Lemma \ref{lem1} and Proposition \ref{prop1}. 
We also briefly describe, for $d=1$, the case where the space domain $\Z$ is truncated on either side, 
leading to a {\it finite dimensional} problem on an interval $[0,N]$. Section \ref{sect:4} is devoted to the 
proof of Theorem \ref{thm2} which characterizes strong stability in terms of non-existence of glancing 
wave packets. Eventually we discuss in Section \ref{sect:examples} several examples related to 
transport, diffusion or dispersive equations.

\section{Derivation of transparent numerical boundary conditions}
\label{sect:2}

In this Section, we prove Theorem \ref{thm1} and construct transparent boundary conditions in the 
rather wide framework covered by Assumptions \ref{assumption0}, \ref{assumption1}, \ref{assumption2} 
and \ref{assumption3}.

\subsection{Proof of Theorem \ref{thm1}}

The proof splits in several steps. In the first step, we compute the transparent conditions on the `Laplace-Fourier 
side'. In several references, this calculation is performed by using the so-called ${\mathcal Z}$-transform, 
which is the discrete analogue of the Laplace transform. The second step consists in going back to the 
original time and space variables by using the inverse Laplace-Fourier transform. This requires specific 
attention in order to show that the `causality' principle is preserved, meaning that in \eqref{relations}, the 
relation that should eventually contribute to determining $u^n$ does not involve some $u^{n'}$ with $n'>n$. 
Writing the relations \eqref{relations} in a causal way in which `future does not affect the past' amounts to 
proving that the Laurent expansions of several objects that are holomorphic on $\U$ do have a limit at infinity 
(and therefore the Laurent expansions only involve nonpositive powers of $z \in \U$). Justifying that such limits 
at infinity exist is the purpose of the second step of the proof and this is the reason why we make the technical 
Assumption \ref{assumption3}. Once this is achieved, the conclusions of Theorem \ref{thm1} follow rather easily.
\bigskip

$\bullet$ Step 1. The transparent conditions on the Laplace-Fourier side.

We consider the iteration \eqref{cauchy} with initial data in $\ell^2$ vanishing for $j_1 \le p_1$. Thanks to 
Assumption \ref{assumption1}, we know that the solution to \eqref{cauchy} satisfies
\begin{equation}
\label{bornecauchy}
\forall \, \gamma >0 \, ,\quad \sum_{n \ge 0} \Delta t \, {\rm e}^{-2 \, \gamma \, n \, \Delta t} \, 
\Ng u^n \Nd_{-\infty,+\infty}^2 <+\infty \, .
\end{equation}
In particular, for all fixed $x_1 \in \R$, the piecewise constant function $u(\cdot,x_1,\cdot)$ in \eqref{function} 
has a well-defined Laplace-Fourier transform on $\C^+ \times \R^{d-1}$, where we let $\C^+$ denote the set 
of complex numbers with positive real part. Here the Laplace transform refers to the time variable $t$, and 
the (partial) Fourier transform refers to the tangential space variables $x':=(x_2,\dots,x_d)$. Even though the 
$x_1$ variable lies in $\R$, we do not perform Fourier transform with respect to $x_1$. We rather stick to the 
original discrete variable $j_1 \in \Z$ and feel free to use the notation $u_{j_1}(t,x')$ which is entirely analogous 
to the notation introduced in \eqref{function} for the step function $u$. The dual variables to $(t,x')$ are denoted 
$(\tau,\bxi')$ below, with $\tau=\gamma+i\, \theta$ and $\bxi'=(\xi_2,\dots,\xi_d)$, and the Laplace-Fourier transform 
of $u_{j_1}$ is denoted $\widehat{u_{j_1}}$. Moreover, given $(\tau,\bxi') \in \C^+ \times \R^{d-1}$, we always 
use the notation
$$
z:={\rm e}^{\tau \, \Delta t} \in \U \, ,\quad \bfeta :=(\xi_2 \, \Delta x_2,\dots,\xi_d \, \Delta x_d) \in \R^{d-1} \, .
$$
The difference between the Laplace-Fourier transform used here and the ${\mathcal Z}$-transform used in 
\cite{AES,aabes,zisowski-ehrhardt,ehrhardt,BELV} is only of a multiplicative - though {\it crucial} - factor. 
Indeed, we compute
\begin{equation}
\label{laplaceu}
\widehat{u_{j_1}}(\tau,\bxi') =\dfrac{1-z^{-1}}{\tau} \, \sum_{n \ge 0} z^{-n} \, 
\int_{\R^{d-1}} {\rm e}^{-i\, x' \cdot \bxi'} \, u_{j_1}^n(x') \, {\rm d}x' \, ,
\end{equation}
at least if, for all $n \in \N$, the sequence $(u_{(j_1,\cdot)}^n)$ belongs to $\ell^1(\Z^{d-1})$. Otherwise we use 
the continuous extension of the Fourier transform to $L^2(\R^{d-1})$ in case $(u_{(j_1,\cdot)}^n)$ belongs to 
$\ell^2(\Z^{d-1})$ without belonging to $\ell^1(\Z^{d-1})$.

Applying Plancherel's Theorem, we obtain from the bound \eqref{bornecauchy} the property:
$$
\forall \, \gamma >0 \, ,\quad \sum_{j_1 \in \Z} \int_{\R \times \R^{d-1}} \big| 
\widehat{u_{j_1}}(\gamma+i\, \theta,\bxi') \big|^2 \, {\rm d} \theta \, {\rm d}\bxi' <+\infty \, .
$$
In particular, for any $\gamma>0$, the sequence $(\widehat{u_{j_1}}(\gamma+i\, \theta,\bxi'))_{j_1 \in \Z}$ 
belongs to $\ell^2(\Z)$ for almost every $(\theta,\bxi') \in \R \times \R^{d-1}$. We omit below the possible 
negligible set of those $(\theta,\bxi')$ for which the sequence is not in $\ell^2$, and make as if this negligible 
set is always empty. This has no consequence of course in the proof.

With the above notation, it is rather straightforward to derive the recurrence relation satisfied by the sequence 
$(\widehat{u_{j_1}}(\gamma+i\, \theta,\bxi'))_{j_1 \in \Z}$. Namely, we first apply the partial Fourier transform 
with respect to $x'$ and get\footnote{It is convenient here to use also the hat notation for denoting the partial 
Fourier transform with respect to the tangential spatial variables $x'$.}
$$
\begin{cases}
{\dps \sum_{\sigma=0}^{s+1}} \, Q_\sigma^\sharp(\bfeta) \, \widehat{u_{j_1}^{n+\sigma}} (\bxi') =0 \, ,& 
n\ge 0 \, ,\quad j_1 \in \Z \, ,\\
\, & \\
\Big( \widehat{u_{j_1}^0},\dots,\widehat{u_{j_1}^s} \Big) (\bxi') = \Big( \widehat{f_{j_1}^0},\dots,\widehat{f_{j_1}^s} 
\Big) (\bxi') \, , & j_1 \in \Z \, ,
\end{cases}
$$
with
\begin{equation}
\label{defqdiese}
\forall \, \sigma=0,\dots,s \, ,\quad 
Q_\sigma^\sharp(\bfeta) := \sum_{\ell_1 =-r_1}^{p_1} \left( 
\sum_{\ell'=-r'}^{p'} a_{(\ell_1,\ell'),\sigma}(\Delta t,\Delta x) \, {\rm e}^{i \, \ell' \cdot \bfeta} \right) \, 
{\bf S}_1^{\ell_1} \, ,\quad ({\bf S}_1^{\ell_1} w)_{j_1} :=w_{j_1+\ell_1} \, .
\end{equation}
Let us recall that $\bfeta$ is a placeholder for $(\xi_2 \, \Delta x_2,\dots,\xi_d \, \Delta x_d)$. We then use the 
expression \eqref{laplaceu} of the Laplace-Fourier transform $\widehat{u_{j_1}}$ to compute the relation
\begin{equation}
\label{recurrencecauchy}
\forall \, j_1 \in \Z \, ,\quad 
\sum_{\ell_1=-r_1}^{p_1} a_{\ell_1}(z,\bfeta) \, \widehat{u_{j_1+\ell_1}}(\tau,\bxi') =F_{j_1}(\tau,\bxi') \, ,
\end{equation}
where the functions $a_{-r_1},\dots,a_{p_1}$ are defined in \eqref{defA-d} and the source term $F_{j_1}$ in 
\eqref{recurrencecauchy} is given by the relation
$$
F_{j_1}(\tau,\bxi') :=\dfrac{1-z^{-1}}{\tau} \, \sum_{m=0}^s \, \sum_{\sigma=m}^s z^{1+\sigma-m} \, 
Q_{1+\sigma}^\sharp(\bfeta) \, \widehat{f_{j_1}^m}(\bxi') \, .
$$

The precise expression of $F_{j_1}$ is not very useful. What is important is that, because of the support 
assumption on the initial data in Theorem \ref{thm1}, $F_{j_1}$ is zero for any index $j_1 \le 0$. Thanks 
to Assumption \ref{assumption2}, we also know that both $a_{-r_1}(z,\bfeta)$ and $a_{p_1}(z,\bfeta)$ 
are nonzero because $\tau \in \C^+$ and therefore $z={\rm e}^{\tau \, \Delta t} \in \U$. Hence 
\eqref{recurrencecauchy} is a recurrence relation of order (exactly equal to) $p_1+r_1$ for the sequence 
$(\widehat{u_{j_1}} (\tau,\bxi'))_{j_1 \in \Z}$. Introducing the vector
$$
U_{j_1} (\tau,\bxi') :=\begin{pmatrix}
\widehat{u_{j_1+p_1-1}}(\tau,\bxi') \\
\vdots \\
\widehat{u_{j_1-r_1}}(\tau,\bxi') \end{pmatrix} \in \C^{p_1+r_1} \, ,
$$
as well as the companion matrix
\begin{equation}
\label{defM}
\forall \, (z,\bfeta) \in \U \times \R^{d-1} \, ,\quad 
\M (z,\bfeta) := \begin{pmatrix}
-\dfrac{a_{p_1-1}(z,\bfeta)}{a_{p_1}(z,\bfeta)} & \dots & \dots & -\dfrac{a_{-r_1}(z,\bfeta)}{a_{p_1}(z,\bfeta)} \\
1& 0 & \dots & 0 \\
 & \ddots & \ddots & \vdots \\
0 & & 1 & 0 \end{pmatrix} \in {\mathcal M}_{p_1+r_1}(\C) \, ,
\end{equation}
we can equivalently rewrite \eqref{recurrencecauchy} as
$$
U_{j_1+1}(\tau,\bxi') -\M(z,\bfeta) \, U_{j_1} (\tau,\bxi') =\dfrac{1}{a_{p_1}(z,\bfeta)} \, \begin{pmatrix} 
F_{j_1}(\tau,\bxi') \\
0 \\
\vdots \\
0 \end{pmatrix} \, .
$$
In particular, there holds
\begin{equation}
\label{recurrencecauchy'}
\forall \, j_1 \le 0 \, ,\quad U_{j_1+1}(\tau,\bxi') =\M(z,\bfeta) \, U_{j_1} (\tau,\bxi') \, .
\end{equation}
The relations \eqref{relations} of Theorem \ref{thm1} are direct consequences of the recurrence relation 
\eqref{recurrencecauchy'}. Some fundamental properties of the matrix $\M(z,\bfeta)$ are stated in the 
following Lemma.

\begin{lemma}
\label{lem2}
Let Assumptions \ref{assumption0}, \ref{assumption1}, \ref{assumption2} and \ref{assumption3} be satisfied. 
Then for all $(z,\bfeta) \in \U \times \R^{d-1}$, the matrix $\M(z,\bfeta)$ in \eqref{defM} is invertible and has 
no eigenvalue on $\cercle$. If moreover Assumption \ref{assumption3} is satisfied, then $\M(z,\bfeta)$ has 
$r_1$ eigenvalues (counted with multiplicity) in $\D$ and the remaining $p_1$ eigenvalues in $\U$.
\end{lemma}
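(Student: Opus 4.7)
The plan is to verify the three assertions in turn. Invertibility is immediate: the companion determinant equals $\det \M(z, \bfeta) = (-1)^{p_1+r_1} a_{-r_1}(z,\bfeta)/a_{p_1}(z,\bfeta)$, which is nonzero on $\U \times \R^{d-1}$ by Assumption \ref{assumption2}. For the absence of eigenvalues on $\cercle$, I would argue by contradiction: an eigenvalue $\kappa_1 \in \cercle$ of $\M(z,\bfeta)$ would satisfy $\sum_{\ell_1 = -r_1}^{p_1} a_{\ell_1}(z, \bfeta)\, \kappa_1^{\ell_1 + r_1} = 0$; dividing by $\kappa_1^{r_1}$ and unfolding \eqref{defA-d} with $\bkappa := (\kappa_1, e^{i \eta_2}, \dots, e^{i \eta_d}) \in (\cercle)^d$ recovers the dispersion relation \eqref{dispersion} having $z$ as a root, which by Lemma \ref{lem0} forces $z \in \Dbar$, contradicting $z \in \U$.

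For the count, I would run a homotopy argument based on the argument principle. Fixing $\bfeta$ and writing $\chi(z,\bfeta;\kappa_1)$ for the characteristic polynomial of $\M(z,\bfeta)$, the number of its roots inside $\cercle$ is $N(z) := \frac{1}{2i\pi} \oint_{\cercle} \partial_{\kappa_1} \chi / \chi \, d\kappa_1$. The previous step ensures $\chi$ has no zero on $\cercle$ for any $z \in \U$, so $N$ is a continuous $\Z$-valued, hence constant, function on the connected set $\U$. It therefore suffices to compute $\lim_{z \to \infty} N(z)$. In the implicit case (Assumption \ref{assumption3}(ii)), $a_{\ell_1}(z, \bfeta)/z^{s+1}$ converges to $\sum_{\ell'} a_{(\ell_1,\ell'),s+1}(\Delta t, \Delta x)\, e^{i \ell' \cdot \bfeta}$, and Assumption \ref{assumption3}(ii) guarantees that the limits for $\ell_1 = -r_1$ and $\ell_1 = p_1$ are nonzero. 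Thus the monic normalization of $\chi$ converges uniformly on compact sets to a polynomial proportional to $\kappa_1^{r_1}\, \widehat{Q_{s+1}}(\kappa_1, e^{i\eta_2}, \dots, e^{i\eta_d})$, of degree exactly $p_1 + r_1$ with no zero at $0$ nor on $\cercle$ (by Assumptions \ref{assumption3}(ii) and \ref{assumption0}, respectively). Applying the argument principle to $\widehat{Q_{s+1}}(\cdot, e^{i\eta_2}, \dots)$ and combining with the index condition \eqref{index} yields exactly $r_1$ zeros inside $\D$; Hurwitz's theorem then transfers the $(r_1, p_1)$ split to $\chi(z, \bfeta; \cdot)$ for $z$ large, and the homotopy propagates this count to every $z \in \U$.

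In the explicit case (Assumption \ref{assumption3}(i)), the limit polynomial degenerates since $\widehat{Q_{s+1}} \equiv 1$: the term $a_0(z, \bfeta) \sim z^{s+1}$ dominates all other $a_{\ell_1}(z, \bfeta) = O(z^s)$ with $\ell_1 \neq 0$, so the naive limit of the monic $\chi$ is merely $\kappa_1^{r_1}$. Normalizing $\chi$ by $a_0(z,\bfeta)$ shows that $r_1$ roots contract to $0$ (and hence lie in $\D$ for $z$ large); normalizing instead by $a_{p_1}(z,\bfeta)$, which under Assumption \ref{assumption3}(i) remains of order $z^s$ with nonzero leading coefficient, makes the coefficient of $\kappa_1^{r_1}$ diverge like $z$, forcing the remaining $p_1$ roots to escape to infinity. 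The delicate point — essentially the only real obstacle in the proof — is this effective-degree degeneration of the limit polynomial, which I would handle via complementary Rouch\'e/Hurwitz arguments at both ends (a small disk around $0$ and a large disk containing the diverging cluster) to obtain the $(r_1, p_1)$ split for $z$ large. The homotopy argument then concludes.
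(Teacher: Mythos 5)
Your proposal is correct and follows essentially the same route as the paper: determinant plus Assumption \ref{assumption2} for invertibility, the dispersion relation and Lemma \ref{lem0} to exclude eigenvalues on $\cercle$, and connectedness together with the limit $z \to \infty$ (Rouch\'e-type counting, using the index condition \eqref{index} and Assumption \ref{assumption3} in the implicit case) for the $(r_1,p_1)$ split. The only difference is cosmetic: in the explicit case you count roots near $0$ and near $\infty$ by two complementary Rouch\'e arguments, whereas the paper first shows by a compactness argument that all stable eigenvalues tend to $0$ and then counts the roots near the origin of the polynomial $D(\kappa,Z)$.
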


Let us take the result of Lemma \ref{lem2} for granted for a while. We can therefore introduce the 
eigenprojectors $\Pi^{s,u}(z,\bfeta)$ associated with the decomposition of $\C^{p_1+r_1}$ into
\begin{equation}
\label{decomposition1}
\C^{p_1+r_1} =\E^s (z,\bfeta) \oplus \E^u (z,\bfeta) \, .
\end{equation}
Here $\E^s$ refers to the {\it stable} subspace, that is the generalized eigenspace of $\M(z,\bfeta)$ 
associated with the eigenvalues in $\D$ and $\E^u$ refers to the {\it unstable} subspace associated 
with the eigenvalues of $\M$ in $\U$. By Lemma \ref{lem2}, the rank of $\Pi^s(z,\bfeta)$ is $r_1$ 
and the rank of $\Pi^u(z,\bfeta)$ is $p_1$.

Using the recurrence relation \eqref{recurrencecauchy'} and the fact that $(U_{j_1} (\tau,\bxi'))_{j_1 \in \Z}$ 
belongs to $\ell^2(\Z)$, the vector $U_1(\tau,\bxi')$ must necessarily belong to the unstable subspace 
$\E^u (z,\bfeta)$ and this property is propagated to any $j_1 \le 0$ by the recurrence \eqref{recurrencecauchy'} 
since $\E^u (z,\bfeta)$ is invariant by $\M$ and $\M^{-1}$. Using the projectors $\Pi^{s,u}(z,\bfeta)$ 
associated with \eqref{decomposition1}, we have obtained:
\begin{equation}
\label{relationshat}
\forall \, j_1 \le 1 \, ,\quad \Pi^s(z,\bfeta) \, U_{j_1}(\tau,\bxi') =0 \, .
\end{equation}

The relations \eqref{relationshat} are the Laplace-Fourier counterparts of the relations \eqref{relations} 
stated in Theorem \ref{thm1}. In what follows, we are going to prove Lemma \ref{lem2}. Then we shall 
explain how one can rewrite \eqref{relationshat} in the original physical space, which amounts to 
determining the inverse Laplace-Fourier transform of the left hand side in \eqref{relationshat}.
\bigskip

\begin{proof}[Proof of Lemma \ref{lem2}]
The proof is mostly the same as in \cite{kreiss1}, but since we intend to show that the argument is not 
restricted to discretized {\it hyperbolic} problems, we reproduce it here for the sake of completeness. 
Let $(z,\bfeta) \in \U \times \R^{d-1}$. The determinant of the matrix $\M(z,\bfeta)$ in \eqref{defM} equals
$$
(-1)^{p_1+r_1} \, a_{-r_1}(z,\bfeta)/a_{p_1}(z,\bfeta) \, ,
$$
this quantity being nonzero due to Assumption \ref{assumption2}. More generally, the characteristic 
polynomial of $\M(z,\bfeta)$ is
$$
\dfrac{1}{a_{p_1}(z,\bfeta)} \, \sum_{\ell_1=-r_1}^{p_1} a_{\ell_1}(z,\bfeta) \, X^{\ell_1+r_1} \, ,
$$
and the eigenvalues of $\M(z,\bfeta)$ are the roots $\kappa_1 \in \C \setminus \{ 0\}$ to the dispersion 
relation
$$
\sum_{\sigma=0}^{s+1} \widehat{Q_\sigma} (\kappa_1,{\rm e}^{i\, \eta_2},\dots,{\rm e}^{i\, \eta_d}) \, 
z^\sigma =0 \, .
$$
In particular, $\kappa_1$ does not belong to the unit circle $\cercle$ for otherwise \eqref{dispersion} would 
have a root $z$ in $\U$ for some $\bxi \in \R^d$ (and this would contradict Lemma \ref{lem0}). This means 
that the eigenvalues of $\M(z,\bfeta)$ split into two groups: the {\it stable} ones in $\D$ and the {\it unstable} 
ones in $\U$, which gives rise to the decomposition \eqref{decomposition1}. It remains to determine the 
dimension of each of the vector spaces in \eqref{decomposition1}.
\bigskip

We start with the case of explicit schemes, that is, case (i) in Assumption \ref{assumption3}. Following 
\cite[Lemma 2]{kreiss1} (see also \cite[Lemma 15]{jfcnotes} for a more detailed exposition), the number 
of stable eigenvalues is computed by analyzing their behavior when $z$ tends to infinity. Since $\U \times 
\R^{d-1}$ is connected, the number of eigenvalues of $\M(z,\bfeta)$ in $\D$ does not depend on $(z,\bfeta)$. 
Let now $\bfeta$ be fixed. As $z$ tends to infinity, all stable eigenvalues of $\M(z,\bfeta)$ converge to zero. 
This property holds for otherwise, there would exist a sequence $(z_n)$ with $|z_n|>n$ and there would exist 
a sequence $(\kappa_n)$ such that $\kappa_n \in \D \cap \text{\rm sp} \, (\M(z_n,\bfeta))$, and $\inf_n 
|\kappa_n|>0$. Up to extracting and relabeling, we can assume that the sequence $(\kappa_n)$ converges 
towards some nonzero complex number $\underline{\kappa}$. We now pass to the limit in the expression
$$
\dfrac{1}{z_n^{s+1}} \, \sum_{\ell_1=-r_1}^{p_1} a_{\ell_1}(z_n,\bfeta) \, \kappa_n^{\ell_1+r_1} =0 \, ,
$$
where the rescaling by $z_n^{-s-1}$ has been made in order to have (recall here that we consider explicit 
schemes):
$$
\dfrac{1}{z_n^{s+1}} \, a_{\ell_1}(z_n,\bfeta) \longrightarrow \delta_{\ell_1 0} \, ,
$$
see \eqref{defA-d}. Hence we get $\underline{\kappa}^{r_1} =0$, which is a contradiction. All the stable 
eigenvalues of $\M(z,\bfeta)$ tend to zero as $z$ tends to infinity (and similarly, all the unstable eigenvalues 
tend to infinity as $z$ tends to infinity). Setting $z=1/Z$, the number of stable eigenvalues of $\M(z,\bfeta)$ 
is computed for $Z$ small by counting the number of roots close to zero to the equation
$$
D(\kappa,Z) :=Z^{s+1} \, \sum_{\ell_1=-r_1}^{p_1} a_{\ell_1}(1/Z,\bfeta) \, \kappa^{\ell_1+r_1} =0 \, .
$$
Since $D$ is a polynomial in $(\kappa,Z)$ with $D(\kappa,0) =\kappa^{r_1}$, there are $r_1$ stable 
eigenvalues of $\M(z,\bfeta)$. The $p_1$ other eigenvalues of $\M(z,\bfeta)$ are the unstable ones.
\bigskip

We now deal with the case of implicit schemes, that is, case (ii) in Assumption \ref{assumption3}. Once 
again, we analyze the behavior of the eigenvalues of $\M(z,\bfeta)$ as $z$ tends to infinity. For fixed 
$\bfeta$ and $\kappa_1$, we compute
$$
\lim_{z \to \infty} \dfrac{1}{z^{s+1}} \, \sum_{\ell_1=-r_1}^{p_1} a_{\ell_1}(z,\bfeta) \, \kappa_1^{\ell_1+r_1} 
=\sum_{\ell_1=-r_1}^{p_1} \left( \sum_{\ell'=-r'}^{p'} a_{(\ell_1,\ell'),s+1}(\Delta t,\Delta x) \, 
{\rm e}^{i \, \ell' \cdot \bfeta} \right) \, \kappa_1^{\ell_1+r_1} \, .
$$
Using Assumptions \ref{assumption0} and \ref{assumption3}, we know that the equation in $\kappa_1$:
$$
\kappa_1^{r_1} \, \widehat{Q_{s+1}} (\kappa_1,{\rm e}^{i \, \eta_2},\dots,{\rm e}^{i\, \eta_d}) =
\sum_{\ell_1=-r_1}^{p_1} \left( \sum_{\ell'=-r'}^{p'} a_{(\ell_1,\ell'),s+1}(\Delta t,\Delta x) \, 
{\rm e}^{i \, \ell' \cdot \bfeta} \right) \, \kappa_1^{\ell_1+r_1} =0 \, ,
$$
is a polynomial equation of degree $p_1+r_1$, that its roots are nonzero and that it has no root on $\cercle$. 
Furthermore, the residue Theorem \cite{rudin} shows that the integral on the left hand side of \eqref{index} 
equals the number of zeroes in $\D$ of the holomorphic function $\widehat{Q_{s+1}} (\cdot,{\rm e}^{i \, \eta_2}, 
\dots, {\rm e}^{i\, \eta_d})$ minus the number of its poles in $\D$ (zeroes and poles being counted with 
multiplicity). By Assumption \ref{assumption3}, we know that there is only one pole at the origin with 
order $r_1$, so the number of zeroes in $\D$ is $r_1$. Summarizing, we have shown that the equation
$$
\sum_{\ell_1=-r_1}^{p_1} \left( \sum_{\ell'=-r'}^{p'} a_{(\ell_1,\ell'),s+1}(\Delta t,\Delta x) \, 
{\rm e}^{i \, \ell' \cdot \bfeta} \right) \, \kappa_1^{\ell_1+r_1} =0 \, ,
$$
has $r_1$ roots in $\D$ and it must therefore also have $p_1$ roots in $\U$. By the Rouch\'e Theorem 
\cite{rudin}, this implies that for any sufficiently large $z$, the equation
$$
\sum_{\ell_1=-r_1}^{p_1} a_{\ell_1}(z,\bfeta) \, \kappa_1^{\ell_1+r_1} =0 \, ,
$$
also has $r_1$ roots in $\D$ and $p_1$ roots in $\U$.
\end{proof}

$\bullet$ Step 2. The limit at infinity of the stable and unstable subspaces.

Let us first observe that because of Lemma \ref{lem2}, the stable and unstable spaces $E^{s,u}$ in 
\eqref{decomposition1} depend holomorphically on $z \in \U$ and analytically on $\bfeta \in \R^{d-1}$. 
Moreover, they are $2\, \pi$-periodic with respect to each coordinate of $\bfeta$ because the matrix 
$\M$ itself is $2\, \pi$-periodic with respect to each coordinate of $\bfeta$, see \eqref{defA-d}. Hence 
the projectors $\Pi^{s,u}$, which can be defined by integrals of $(w \, I-\M)^{-1}$ on suitable contours 
\cite{baumgartel}, depend holomorphically on $z \in \U$ and analytically on $\bfeta \in \R^{d-1}$. 
Assumption \ref{assumption3} comes back into play for studying the limit of the projectors $\Pi^{s,u}$ 
as $z$ tends to infinity, which amounts to determining the limits of $E^{s,u}$ as $z$ tends to infinity.

Let us first consider case {\rm (ii)} in Assumption \ref{assumption3}, that is, the case of implicit schemes. 
We have already seen in the proof of Lemma \ref{lem2} that for all $\bfeta \in \R^{d-1}$, the function
$$
\kappa_1 \in \C \setminus \{ 0 \} \longmapsto \widehat{Q_{s+1}} 
(\kappa_1,{\rm e}^{i\, \eta_2},\dots,{\rm e}^{i\, \eta_d}) \, ,
$$
has $r_1$ zeroes in $\D \setminus \{ 0 \}$ and $p_1$ zeroes in $\U$ (as always, zeroes are counted with 
multiplicity). We can then easily determine the behavior of the spectral projectors $\Pi^{s,u}(z,\bfeta)$ as 
$z$ tends to infinity. Indeed, by the definition \eqref{defM} of $\M(z,\bfeta)$, and the fact that both $a_{-r_1}$ 
and $a_{p_1}$ are polynomials of degree $s+1$ in $z$ (Assumption \ref{assumption3}), we find that 
$\M (z,\bfeta)$ has a limit as $z$ tends to infinity. This limit is given by
\begin{equation}
\label{limMinfty}
\begin{pmatrix}
-\dfrac{a_{\infty,p_1-1}(\bfeta)}{a_{\infty,p_1}(\bfeta)} & \dots & \dots & 
-\dfrac{a_{\infty,-r_1}(\bfeta)}{a_{\infty,p_1}(\bfeta)} \\
1& 0 & \dots & 0 \\
 & \ddots & \ddots & \vdots \\
0 & & 1 & 0 \end{pmatrix} \, ,\quad 
a_{\infty,\ell_1}(\bfeta) :=\sum_{\ell'=-r'}^{p'} a_{(\ell_1,\ell'),s+1}(\Delta t,\Delta x) \, 
{\rm e}^{i \, \ell' \cdot \bfeta} \, ,
\end{equation}
and we know from the previous arguments that this matrix has $r_1$ eigenvalues in $\D \setminus \{ 0 \}$ and 
$p_1$ eigenvalues in $\U$. In other words, the singularity at $Z=0$ of the projectors $\Pi^{s,u}(1/Z,\bfeta)$ is 
removable since the splitting between stable and unstable eigenvalues persists up to $Z=0$. In what follows, 
we let $\E^s(\infty,\bfeta)$, resp. $\E^u(\infty,\bfeta)$, denote the stable, resp. unstable, subspace of the matrix 
$\M(\infty,\bfeta)$ in \eqref{limMinfty}. We also let $\Pi^{s,u}(\infty,\bfeta)$ denote the projectors associated with 
the decomposition
$$
\C^{p_1+r_1} =\E^s (\infty,\bfeta) \oplus \E^u (\infty,\bfeta) \, ,
$$
which holds for any $\bfeta \in \R^{d-1}$.

Let us now turn to the case of explicit schemes, case {\rm (i)} in Assumption \ref{assumption3}. This is 
the case treated in \cite{kreiss1,jfcsinum}. From the proof of Lemma \ref{lem2}, we already know that the 
eigenvalues of $\M(z,\bfeta)$ are the roots $\kappa_1$ to the equation
$$
\sum_{\ell_1=-r_1}^{p_1} \kappa_1^{\ell_1+r_1} \, a_{\ell_1}(z,\bfeta) =0 \, .
$$
There are $r_1$ stable roots, all of them converging to zero as $z$ tends to infinity, and there are $p_1$ 
unstable roots, all of them tending to infinity as $z$ tends to infinity. We are going to determine an asymptotic 
expansion of the eigenvalues as $z$ tends to infinity. (Of course determining this behavior makes sense only 
if $r_1$ and/or $p_1$ is nonzero, and this is the reason why in Assumption \ref{assumption3} we have stated 
conditions only when these integers are nonzero, which we assume from now on.) We introduce the function
$$
D \, : \, (Z,\kappa_1) \longmapsto 
Z^{s+1} \, \sum_{\ell_1=-r_1}^{p_1} \kappa_1^{\ell_1+r_1} \, a_{\ell_1}(1/Z,\bfeta) \, ,
$$
that is a polynomial function of $(Z,\kappa_1)$, and that satisfies
$$
D(0,\kappa_1) =\kappa_1^{r_1} \, ,\quad \dfrac{\partial D}{\partial Z} (0,0) 
=\lim_{Z \to 0} Z^s \, a_{-r_1}(1/Z,\bfeta) 
=\sum_{\ell'=-r'}^{p'} a_{(-r_1,\ell'),s}(\Delta t,\Delta x) \, {\rm e}^{i \, \ell' \cdot \bfeta} \neq 0 \, .
$$
Applying the Puiseux expansions theory, for which we refer to \cite{baumgartel}, the $r_1$ eigenvalues of 
$\M(1/Z,\bfeta)$ close to zero thus have an asymptotic expansion of the form
$$
\kappa_1 \sim c \, Z^{1/r_1} +O(Z^{2/r_1}) \, ,\quad c \neq 0\, .
$$
Recall that the frequency $\bfeta$ is fixed here. In particular, the $r_1$ stable eigenvalues of $\M(z,\bfeta)$ 
are simple for all sufficiently large $z$, $\bfeta$ being fixed (the splitting comes from the $r_1$ possible 
branches for the $r_1$-th root of $1/z$). In a similar way, we can prove that the $p_1$ unstable eigenvalues 
of $\M(z,\bfeta)$ have the asymptotic expansion
$$
\kappa_1 \sim d \, z^{1/p_1} +O(1) \, ,\quad d \neq 0\, .
$$
At this stage, we know that for all sufficiently large $z$, the eigenvalues of $\M(z,\bfeta)$ are simple; the 
stable ones behave like $z^{-1/r_1}$ and the unstable ones behave like $z^{1/p_1}$ as $z$ tends to infinity. 
It remains to compute the limit of $\Pi^s(z,\bfeta)$ (the limit of $\Pi^u$ is directly obtained by using $\Pi^u 
=I-\Pi^s$), which will also give the limit of $\E^{s,u}(z,\bfeta)$ as $z$ tends to infinity.

Using the expression of the eigenvectors of a companion matrix, we know that the Vandermonde matrix
$$
\begin{pmatrix}
\kappa_{1,1}^{p_1+r_1-1} & \cdots & \kappa_{1,p_1+r_1}^{p_1+r_1-1} \\
\vdots & & \vdots \\
1 & \cdots & 1 \end{pmatrix} \, ,
$$
diagonalizes $\M$ for $z$ large enough (so that all eigenvalues are simple). We label the eigenvalues in 
such a way that $\kappa_{1,1},\dots,\kappa_{1,r_1}$ are the stable ones, and $\kappa_{1,r_1+1}, \dots, 
\kappa_{1,r_1+p_1}$ are the unstable ones. With this convention, there holds
$$
\Pi^s(z,\bfeta) =\begin{pmatrix}
\kappa_{1,1}^{p_1+r_1-1} & \cdots & \kappa_{1,p_1+r_1}^{p_1+r_1-1} \\
\vdots & & \vdots \\
1 & \cdots & 1 \end{pmatrix} \, \begin{pmatrix}
I_{r_1} & 0 \\
0 & 0 \end{pmatrix} \, \begin{pmatrix}
\kappa_{1,1}^{p_1+r_1-1} & \cdots & \kappa_{1,p_1+r_1}^{p_1+r_1-1} \\
\vdots & & \vdots \\
1 & \cdots & 1 \end{pmatrix}^{-1} \, .
$$
Our aim is to show the property
\begin{equation}
\label{limprojecteur}
\lim_{z \to \infty} \Pi^s(z,\bfeta) =\begin{pmatrix}
0 & 0 \\
0 & I_{r_1} \end{pmatrix} \, .
\end{equation}
Introducing the matrix
\begin{equation}
\label{defW}
W(z,\bfeta) :=\begin{pmatrix}
1 & \cdots & \kappa_{1,1}^{p_1+r_1-1} \\
\vdots & & \vdots \\
1 & \cdots & \kappa_{1,p_1+r_1}^{p_1+r_1-1} \end{pmatrix} \, ,
\end{equation}
and using the previous expression of $\Pi^s(z,\bfeta)$, proving \eqref{limprojecteur} is equivalent to proving 
the property
\begin{equation}
\label{limprojecteur'}
\lim_{z \to \infty} W(z,\bfeta)^{-1} \, \begin{pmatrix}
I_{r_1} & 0 \\
0 & 0 \end{pmatrix} \, W(z,\bfeta) =\begin{pmatrix}
I_{r_1} & 0 \\
0 & 0 \end{pmatrix} \, .
\end{equation}

We introduce the block decomposition of the matrix $W(z,\bfeta)$ in \eqref{defW} and of its inverse 
$W(z,\bfeta)^{-1}$ (we forget temporarily to recall the $(z,\bfeta)$ dependence of all matrices to make 
expressions a little lighter):
$$
W =\begin{pmatrix}
V & V_\sharp \\
V_\flat & V_\natural \end{pmatrix} \, ,\quad W^{-1} =\begin{pmatrix}
\widetilde{V} & \widetilde{V_\sharp} \\
\widetilde{V_\flat} & \widetilde{V_\natural} \end{pmatrix} \, ,
$$
where the upper left block has dimension $r_1 \times r_1$ and all other dimensions follow accordingly. 
The matrix on the left of \eqref{limprojecteur'}, whose limit we wish to compute, is given by:
$$
\begin{pmatrix}
\widetilde{V} \, V & \widetilde{V} \, V_\sharp \\
\widetilde{V_\flat} \, V & \widetilde{V_\flat} \, V_\sharp \end{pmatrix} 
=\begin{pmatrix}
I -\widetilde{V_\sharp} \, V_\flat & \widetilde{V} \, V_\sharp \\
-\widetilde{V_\natural} \, V_\flat & \widetilde{V_\flat} \, V_\sharp \end{pmatrix} \, ,
$$
where we have used the fact that $W^{-1} \, W$ is the identity. For computing the limit as $z$ tends to 
infinity, we need some bounds on all matrices involved in the latter expression. Let us first examine the 
block $V_\sharp$. From the definition \eqref{defW} of $W$ and the labeling of eigenvalues of $\M$, 
the block $V_\sharp$ involves powers at least equal to $r_1$ of stable eigenvalues of $\M$. The block 
$V_\flat$ involves powers at most equal to $r_1-1$ of unstable eigenvalues of $\M$. Hence we have 
the bounds
$$
\| V_\sharp \| =O(|z|^{-1}) \, ,\quad \| V_\flat \| =O(|z|^{(r_1-1)/p_1}) \, ,
$$
where here $\| \cdot \|$ denotes any norm on (not necessarily square) complex matrices, for instance the 
maximum of the modulus of the entries. We now need to estimate the blocks of $W^{-1}$, which is made 
possible thanks to the explicit and somehow classical formula (the formula comes from the Lagrange 
polynomial interpolation theory):
\begin{equation}
\label{formuleinv}
(W^{-1})_{kj} =(-1)^k \, \left( \prod_{m \neq j} (\kappa_{1,m}-\kappa_{1,j}) \right)^{-1} \, 
\sum_{\underset{m_1,\dots,m_{p_1+r_1-k} \neq j}{1 \le m_1 <\cdots <m_{p_1+r_1-k} \le p_1+r_1,}} 
\kappa_{1,m_1} \cdots \kappa_{1,m_{p_1+r_1-k}} \, .
\end{equation}
In this formula, the sum is understood as being equal to $1$ if $k=p_1+r_1$. Repeated and careful 
applications of \eqref{formuleinv} lead to the following bounds for the blocks of $W^{-1}$:
$$
\| \widetilde{V} \| =O(|z|^{1-1/r_1}) \, ,\quad \| \widetilde{V_\sharp} \| =O(|z|^{-r_1/p_1-1/r_1}) \, ,\quad 
\| \widetilde{V_\flat} \| =O(|z|^{1-1/p_1-1/r_1}) \, ,\quad \| \widetilde{V_\natural} \| =O(|z|^{-r_1/p_1}) \, .
$$
In particular, when combined with the bounds on $V_\sharp$ and $V_\flat$, we can show that the four 
products of matrices $\widetilde{V_\sharp} \, V_\flat$, $\widetilde{V} \, V_\sharp$, $\widetilde{V_\natural} 
\, V_\flat$, and $\widetilde{V_\flat} \, V_\sharp$ tend to zero, which completes the proof of 
\eqref{limprojecteur'}.

Since we have \eqref{limprojecteur}, and the other limit
$$
\lim_{z \to \infty} \Pi^u(z,\bfeta) =\begin{pmatrix}
I_{p_1} & 0 \\
0 & 0 \end{pmatrix} \, ,
$$
we get the limits of the stable and unstable subspaces as well. Namely, the limit $\E^s(\infty,\bfeta)$ 
of $\E^s(z,\bfeta)$ is the vector space spanned by the last $r_1$ vectors in the canonical basis of 
$\C^{p_1+r_1}$, and the limit $\E^u(\infty,\bfeta)$ of $\E^u(z,\bfeta)$ is the vector space spanned by 
the first $p_1$ vectors in the canonical basis of $\C^{p_1+r_1}$. With that definition, we obviously 
have the splitting \eqref{decomposition1} that persists up to $z=\infty$, as in the implicit case.
\bigskip

$\bullet$ Step 3. The transparent conditions in the physical variables.

In the previous step, we have seen that the projector $\Pi^s(z,\bfeta)$ onto the stable subspace of 
$\M(z,\bfeta)$ has a limit as $z$ tends to infinity. For explicit schemes, this limit is independent of 
$\bfeta$ and is given by \eqref{limprojecteur}. Consequently, $\Pi^s$ extends as a function on 
$(\U \cup \{ \infty \}) \times \R^{d-1}$ that depends holomorphically on $z$ and analytically on 
$\bfeta$ with the additional property of being $2\, \pi$-periodic with respect to each coordinate 
of $\bfeta$. We can therefore write the Laurent expansion of $\Pi^s$ under the form
$$
\forall \, (z,\bfeta) \in (\U \cup \{ \infty \}) \times \R^{d-1} \, ,\quad 
\Pi^s (z,\bfeta) =\sum_{n \ge 0} z^{-n} \, \Pi_n(\bfeta) \, ,
$$
where the convergence is normal on every compact subset. In particular, since each matrix $\Pi_n$ 
depends analytically and in a periodic way on $\bfeta$, the convergence of the Laurent series is normal 
on any set of the form $\{ |z| \ge 1+\delta \} \times \R^{d-1}$, $\delta>0$, that is:
\begin{equation}
\label{boundpin}
\forall \, \delta >0 \, ,\quad 
\sum_{n \ge 0} \dfrac{1}{(1+\delta)^n} \, \sup_{\bfeta \in \R^{d-1}} \| \Pi_n(\bfeta) \| <+\infty \, .
\end{equation}
We go back to the definition of the vector $U_{j_1}(\tau,\bxi')$ in \eqref{relationshat}, use the Laurent 
expansion of $\Pi^s$ and the expression \eqref{laplaceu} of the Laplace-Fourier transform of $u_{j_1}$ 
to get
\begin{equation}
\label{relationsfourier}
\forall \, n \in \N \, ,\quad \forall \, j_1 \le 1 \, ,\quad 
\sum_{m=0}^n \Pi_{n-m}(\bfeta) \, \begin{pmatrix}
\widehat{u_{j_1+p_1-1}^m} (\bxi') \\
\vdots \\
\widehat{u_{j_1-r_1}^m} (\bxi') \end{pmatrix} =0 \, ,
\end{equation}
where we recall that here, the `hat' denotes partial Fourier transform with respect to $(x_2,\dots,x_d)$, 
and $\bfeta$ is a short notation for $(\xi_2 \, \Delta x_2,\dots,\xi_d \, \Delta x_d)$. The sequence of 
operators $(\bpi_n)_{n \in \N}$ is then defined as the following sequence of Fourier multipliers: for 
any $n \in \N$ and any sequence $v \in \ell^2(\Z^{d-1};\C^{p_1+r_1})$, we identify the sequence $v$ 
and the corresponding step function
$$
v(x') :=v_{j'} \, ,\quad \forall \, x' \in \prod_{k=2}^d \, [j_k \, \Delta x_k;(j_k+1) \, \Delta x_k) \, .
$$
In particular, the Fourier transform of the sequence $v$ means the Fourier transform of the corresponding 
step function. Then $\bpi_n \, v$ is defined as the sequence\footnote{It is indeed a rather standard result 
that Fourier multipliers associated with periodic symbols map the set of $L^2$ step functions into itself. We 
can therefore equivalently view the operator $\bpi_n$ as acting on $\ell^2(\Z^{d-1})$ with values in $\ell^2 
(\Z^{d-1})$ rather than acting on the set of $L^2$ step functions with values in $L^2(\R^{d-1})$.} whose 
Fourier transform is given by
\begin{equation}
\label{defpin}
\bxi' \in \R^{d-1} \longmapsto \Pi_n(\bfeta) \, \widehat{v} (\bxi') \, .
\end{equation}
With this definition for the $\bpi_n$'s, applying the inverse Fourier transform to \eqref{relationsfourier} gives 
\eqref{relations}. To complete the proof of Theorem \ref{thm1}, it only remains to show that the $\bpi_n$'s 
satisfy the growth condition and the algebraic constraints stated in Theorem \ref{thm1}. The growth condition 
is a direct consequence of the bound \eqref{boundpin} on the symbols $(\Pi_n)$ of the Fourier multipliers 
$(\bpi_n)$. The algebraic constraints follow by using the fact that $\Pi^s$ is a projector (up to now we have 
only used that $E^u$ is the kernel of $\Pi^s$). Expanding the equality $\Pi^s (z,\bfeta)^2 =\Pi^s (z,\bfeta)$ in 
Laurent series with respect to $z$, we get the algebraic constraints for the symbols:
$$
\forall \, n \in \N \, ,\quad \forall \, \bfeta \in \R^{d-1} \, ,\quad 
\Pi_n(\bfeta) =\sum_{m=0}^n \Pi_m (\bfeta) \, \Pi_{n-m} (\bfeta) \, .
$$
The relations satisfied by the $\bpi_n$'s follow immediately. This completes the proof of Theorem \ref{thm1}.

\subsection{Alternative formulations of transparent boundary conditions}

In this paragraph, we explain why the formulation of the transparent conditions encoded in the operators 
$\bpi_n$ in \eqref{relations} is not unique and what other choices, that may be more suitable from a practical 
point of view, can be made.

\subsubsection{Alternative formulation with projectors}

In the Laplace-Fourier variables, the recurrence relation \eqref{recurrencecauchy'} implies that the 
vector $U_1(\tau,\bxi')$ belongs to the unstable subspace $\E^u(z,\bfeta)$ of $\M(z,\bfeta)$. Using 
the decomposition \eqref{decomposition1}, we have equivalently formulated this property in writing 
\eqref{relationshat}. The arbitrariness here lies in the choice of the supplementary vector space 
$\E^s(z,\bfeta)$. More precisely, let us assume that one can choose a vector space $\widetilde{\E}^s 
(z,\bfeta)$ of dimension $r_1$ in $\C^{p_1+r_1}$, depending holomorphically on $z \in \U \cup \{ \infty\}$, 
analytically on $\bfeta \in \R^{d-1}$ and $2\, \pi$-periodically with respect to each coordinate of 
$\bfeta$, and such that one has the decomposition\footnote{We recall here that $\E^u$ has a limit 
when $z$ tends to infinity, which we have denoted $\E^u (\infty,\bfeta)$.}:
\begin{equation}
\label{decomposition2}
\forall \, (z,\bfeta) \in (\U \cup \{ \infty \}) \times \R^{d-1} \, ,\quad 
\C^{p_1+r_1} =\widetilde{\E}^s (z,\bfeta) \oplus \E^u (z,\bfeta) \, .
\end{equation}
We then let $\widetilde{\Pi}^{s,u}(z,\bfeta)$ denote the corresponding projectors\footnote{Of course, 
$\widetilde{\Pi}^u(z,\bfeta)$ does not coincide with $\Pi^u(z,\bfeta)$, even though one of the vector 
spaces in \eqref{decomposition2} is the same as in \eqref{decomposition1}.}. Then one can equivalently 
rewrite \eqref{relationshat} as
$$
\forall \, j_1 \le 1 \, ,\quad \widetilde{\Pi}^s(z,\bfeta) \, U_{j_1}(\tau,\bxi') =0 \, ,
$$
and since we already know that $\widetilde{\Pi}^s$ is holomorphic in $z$ on $\U \cup \{ \infty \}$, the Laurent 
series of $\widetilde{\Pi}^s$ only involves nonpositive powers of $z$. We can therefore reproduce the same 
arguments as in Step 3 of the proof of Theorem \ref{thm1}, which gives rise in the end to a set of relations
$$
\forall \, n \in \N \, ,\quad \forall \, j_1 \le 0 \, ,\quad 
\sum_{m=0}^n \widetilde{\bpi}_{n-m} \, \begin{pmatrix}
u_{(j_1+p_1,\cdot)}^m \\
\vdots \\
u_{(j_1+1-r_1,\cdot)}^m \end{pmatrix} =0 \, ,
$$
with a definition of the operators $\widetilde{\bpi}_{n-m}$ that is entirely analogous to the one of the 
$\bpi_n$'s. What really matters is not the sequence of operators $(\bpi_n)$ in \eqref{relations} but rather 
the kernel and the range of $\bpi_0$ for instance. This is the reason why there is some possible freedom 
in the formulation of \eqref{relations}.

Of course, the supplementary vector space $\E^s(z,\bfeta)$ in \eqref{decomposition1} is a rather natural 
choice, at least because when the eigenvalues of $\M(z,\bfeta)$ are simple, the projectors $\Pi^{s,u} 
(z,\bfeta)$ are given in terms of a Vandermonde matrix and its inverse for which explicit expressions 
are available\footnote{There are also explicit expressions when some eigenvalues are not simple but 
the algebra gets more involved.}. Therefore there does not seem to be much simplification in choosing 
another supplementary vector space to $\E^u$, though one should keep in mind that it is a possibility.

\subsubsection{Alternative formulation with linear forms}

The other formulation that we propose seems to be much more used in practice, see e.g. 
\cite{ehrhardt-arnold,AES,zisowski-ehrhardt,ehrhardt,ducomet-zlotnik,zheng-wen-han,BELV}. 
It is specifically recommended in the case $r_1=1$ for then $\E^u(z,\bfeta)$ is a hyperplane in 
$\C^{p_1+r_1}$ which one can consider as the kernel of some linear form.

In full generality, we know that $\E^u$ defines a holomorphic/analytic-periodic vector bundle over 
$(\U \cup \{ \infty \}) \times \R^{d-1}$. By holomorphic/analytic-periodic, it should be clear by now 
that we mean holomorphic with respect to $z \in \U \cup \{ \infty \}$, analytic with respect to $\bfeta 
\in \R^{d-1}$ and $2\, \pi$-periodic with respect to each coordinate of $\bfeta$. In the same way, 
$\E^s$ defines a holomorphic/analytic-periodic vector bundle over $(\U \cup \{ \infty \}) \times \R^{d-1}$, 
and the fiber $\E^s(z,\bfeta)$, resp. $\E^u(z,\bfeta)$, coincides with the range of the projector $\Pi^s 
(z,\bfeta)$, resp. $\Pi^u(z,\bfeta)$. By the transformation $z \rightarrow 1/z$, the set $\U \cup \{ \infty \}$ 
is mapped biholomorphically onto the unit disk $\D$, which is simply connected. By following the argument 
in \cite[Chapter 2.4]{kato}, we can thus determine for all $\bfeta \in \R^{d-1}$ a basis $e_1^s(z,\bfeta), \dots, 
e_{r_1}^s(z,\bfeta)$ of $\E^s(z,\bfeta)$, resp. a basis $e_1^u(z,\bfeta),\dots,e_{p_1}^u(z,\bfeta)$ of $\E^u 
(z,\bfeta)$, that depends holomorphically on $z \in \U \cup \{ \infty \}$. (We do not concentrate at first on 
the dependence with respect to $\bfeta$ and consider $\bfeta$ as a fixed parameter for now.)

The inverse of the matrix
$$
\big( e_1^s(z,\bfeta) \, \cdots \, e_{r_1}^s(z,\bfeta) \, e_1^u(z,\bfeta) \, \cdots \, e_{p_1}^u(z,\bfeta) \big) \in 
{\mathcal M}_{p_1+r_1}(\C) \, ,
$$
provides with $r_1$ row vectors $L_1(z,\bfeta),\dots,L_{r_1}(z,\bfeta) \in {\mathcal M}_{1,p_1+r_1}(\C)$ 
that depend holomorphically on $z \in \U \cup \{ \infty \}$ and such that the unstable subspace $\E^u$ 
reads
$$
\E^u(z,\bfeta) = \{ X \in \C^{p_1+r_1} \, / \, L_1(z,\bfeta) \, X =\cdots = L_{r_1}(z,\bfeta) \, X =0 \} \, .
$$
In other words, for all $\bfeta \in \R^{d-1}$, we have constructed a matrix $L(z,\bfeta) \in 
{\mathcal M}_{r_1,p_1+r_1}(\C)$ of {\it full rank} $r_1$, that depends holomorphically on $z$ on $\U \cup 
\{ \infty \}$, and whose kernel is $\E^u(z,\bfeta)$. The relations \eqref{relationshat} in the proof of Theorem 
\ref{thm1} can be equivalently recast as
$$
\forall \, j_1 \le 1 \, ,\quad L(z,\bfeta) \, U_{j_1}(\tau,\bxi') =0 \, ,
$$
with the major gain, from an algebraic point of view, that $L$ has full rank so that any perturbation (meaning 
a nonzero vector in $\C^{r_1}$ on the right hand side is now admissible in view of a stability analysis, which 
bypasses the algebraic constraints of Lemma \ref{lem1}). We can expand $L$ in Laurent series
$$
L(z,\bfeta) =\sum_{n \ge 0} \dfrac{1}{z^n} \, L_n(\bfeta) \, ,
$$
apply the inverse Laplace transform and rewrite the above relations as
$$
\forall \, n \in \N \, ,\quad \forall \, j_1 \le 1 \, ,\quad 
\sum_{m=0}^n L_{n-m}(\bfeta) \, \begin{pmatrix}
\widehat{u_{j_1+p_1-1}^m} (\bxi') \\
\vdots \\
\widehat{u_{j_1-r_1}^m} (\bxi') \end{pmatrix} =0 \, ,
$$
which is the analogue of \eqref{relationsfourier}.

Assume now that the above construction of the matrix $L$ can be performed in such a way that $L$ 
depends analytically and $2\, \pi$-periodically on $\bfeta$. Analyticity can be obtained by using the 
same procedure as in \cite[Chapter 4.6]{benzoni-serre} (which corresponds to applying the method 
of \cite{kato} coordinate by coordinate), but periodicity seems far from obvious if one constructs the 
above basis by ODE arguments as in \cite{kato}. Assume nevertheless that periodicity can be achieved 
(for instance, as in the example below, because one has explicit expressions). Then one has all the 
desirable properties for applying inverse Fourier transform in the previous relations and write the 
relations in the original physical variables under the form
$$
\forall \, n \in \N \, ,\quad \forall \, j_1 \le 0 \, ,\quad 
\sum_{m=0}^n {\bf L}_{n-m} \, \begin{pmatrix}
u_{(j_1+p_1,\cdot)}^m \\
\vdots \\
u_{(j_1+1-r_1,\cdot)}^m \end{pmatrix} =0 \, ,
$$
with suitable Fourier multipliers ${\bf L}_n$ on $\ell^2(\Z^{d-1})$. The nice feature here is that one no 
longer needs to care about compatibility conditions for the boundary forcing terms (if present). There 
may of course still remain some indeterminacy in the construction of $L$ due again to the choice of a 
supplementary vector space to $\E^u$ and to the choice of a basis.

In the case $d=1$, the argument in \cite{kato} allows us to construct the matrix $L(z)$ as above, 
with $L$ holomorphic on $\U \cup \{ \infty \}$. Such a construction is the one that is used for instance 
in \cite{ehrhardt-arnold,BELV,BMN}. When $d$ is larger than $1$, the main difficulty is to construct 
$L(z,\bfeta)$ with the property that $L$ is $2\, \pi$-periodic with respect to each coordinate of $\bfeta$. 
If we go through the arguments above, we could construct such an $L$ provided that we have a basis 
of $\E^s$ and a basis of $\E^u$ that depend holomorphically/analytically/periodically on $(z,\bfeta)$ 
in $(\U \cup \{ \infty \}) \times \R^{d-1}$. Constructing such bases is far from obvious, because it 
amounts to showing that the vector bundles $\E^s$ and $\E^u$ are trivial, but here, because of 
the periodicity in $\bfeta$, these vector bundles are considered over $(\U \cup \{ \infty \}) \times 
(\cercle)^{d-1}$, which is {\it not contractile} (unless $d=1$). However, there are examples for which 
one can show from an explicit expression of $\E^s(z,\bfeta)$ and $\E^u(z,\bfeta)$ that the bundles 
over $(\U \cup \{ \infty \}) \times (\cercle)^{d-1}$ are indeed trivial, and this allows then to construct 
the matrix $L(z,\bfeta)$ even for problems with $d \ge 2$.

\subsubsection{The simplest and most frequent example}

Let us consider for simplicity the case $d=1$ so that the previous technical difficulties encountered 
because of the tangential Fourier variables $\bfeta$ disappear. Let us assume furthermore $p_1=r_1=1$. 
In other words, the original numerical scheme \eqref{cauchy} takes the form:
\begin{equation}
\label{cauchy11}
\begin{cases}
{\dps \sum_{\sigma=0}^{s+1}} \, a_{-1,\sigma} \, u_{j-1}^{n+\sigma} +a_{0,\sigma} \, u_j^{n+\sigma} 
+a_{1,\sigma} \, u_{j+1}^{n+\sigma} =0 \, ,& n\ge 0 \, , \, j \in \Z \, ,\\
(u^0,\dots,u^s) = (f^0,\dots,f^s) \in \ell^2 (\Z)^{s+1} \, . &
\end{cases}
\end{equation}
Of course, the coefficients $a_{-1,\sigma},a_{0,\sigma},a_{1,\sigma}$ in \eqref{cauchy11} may depend on 
$\Delta t$ and/or $\Delta x$.

Let us emphasize how the various assumptions on \eqref{cauchy} translate in the particular case 
\eqref{cauchy11}. Assumptions \ref{assumption0} and \ref{assumption3} mean:
\begin{description}
 \item[For explicit schemes:] $a_{-1,s+1} = a_{1,s+1} = 0$, $a_{0,s+1}=1$, and $a_{-1,s} \, a_{1,s} \neq 0$ 
 (this last condition may restrict the possible values of the discretization parameters $\Delta t,\Delta x$, see 
 the example of the Lax-Wendroff scheme in Section \ref{sect:examples}).
 
 \item[For implicit schemes:] $a_{-1,s+1} \, a_{1,s+1} \neq 0$, and the polynomial (in $\kappa$)
$$
a_{-1,s+1} +a_{0,s+1} \, \kappa +a_{1,s+1} \, \kappa^2 \, ,
$$
 has one root in $\D$ (necessarily not zero) and one root in $\U$. If the coefficients are real, this means that 
 one root belongs to $(-1,1)$ and one root belongs to $(-\infty,-1) \cup (1,+\infty)$ for the roots cannot be 
 complex conjugate.
\end{description}
Assumption \ref{assumption2} means that both polynomials
$$
\sum_{\sigma=0}^{s+1} a_{-1,\sigma} \, z^\sigma \, ,\qquad 
\sum_{\sigma=0}^{s+1} a_{1,\sigma} \, z^\sigma \, ,
$$
have no root in $\U$ (Assumption \ref{assumption2'} means that they have no root in $\Ubar$). For explicit 
schemes, these polynomials have degree $s$ while they have degree $s+1$ for implicit schemes. In 
particular, Assumption \ref{assumption2'} is automatically satisfied if the scheme is explicit and $s=0$ for 
then the previous two polynomials are constant and nonzero.

Let us now turn to our derivation of the transparent boundary conditions. The matrix $\M$ of interest is 
defined in \eqref{defM}. In one space dimension with $p=r=1$, its expression reduces to
\begin{equation}
\label{defM11}
\M (z) := \begin{pmatrix}
-\dfrac{a_0(z)}{a_1(z)} & -\dfrac{a_{-1}(z)}{a_1(z)} \\
1& 0 \end{pmatrix} \in {\mathcal M}_2(\C) \, ,\quad 
a_\ell(z) :=\sum_{\sigma=0}^{s+1} a_{\ell,\sigma} \, z^\sigma \, .
\end{equation}
We know from the proof of Theorem \ref{thm1} that $\M(z)$ has one eigenvalue $\kappa_s(z)$ in $\D$ 
and one eigenvalue $\kappa_u(z)$ in $\U$, both counted with multiplicity and therefore simple. Since 
the eigenvalues do not cross for $z \in \U$, they both depend holomorphically on $z \in \U$. The stable 
eigenvalue $\kappa_s(z)$ has a limit when $z$ tends to infinity, while the unstable eigenvalue $\kappa_u(z)$ 
has a limit when $z$ tends to infinity only when the scheme is implicit (for an explicit scheme, $\kappa_u(z)$ 
tends to infinity when $z$ tends to infinity). The stable and unstable subspaces read
$$
\E^s(z) =\text{\rm Span } \begin{pmatrix}
\kappa_s(z) \\
1 \end{pmatrix} \, ,\quad \E^u(z) =\text{\rm Span } \begin{pmatrix}
1 \\
\kappa_u(z)^{-1} \end{pmatrix} \, ,
$$
where the choice for parametrizing $\E^u$ has been made in such a way that the generating vector has 
a limit when $z$ tends to infinity (even when the scheme is explicit).

Let us now discuss two possible ways of writing the transparent boundary conditions. We first follow the 
approach based on the spectral projectors as in the proof of Theorem \ref{thm1}. The decomposition:
$$
\forall \, z \in \U \cup \{ \infty \} \, ,\quad \C^2 =\E^s(z) \oplus \E^u (z) \, ,
$$
is endowed with two projectors $\Pi^{s,u}(z)$, whose explicit expression is given by
$$
\Pi^s(z) =\dfrac{1}{\kappa_s(z) -\kappa_u(z)} \, \begin{pmatrix}
\kappa_s(z) & -\kappa_s(z) \, \kappa_u(z) \\
1 & -\kappa_u(z) \end{pmatrix} \, ,\quad 
\Pi^u(z) =I-\Pi^s(z) \, .
$$
Observe in particular that for explicit schemes, knowing $\kappa_s \rightarrow 0$ and $\kappa_u \rightarrow 
\infty$ as $z$ tends to $\infty$, we recover the asymptotic behavior
$$
\lim_{z \to \infty} \Pi^s(z) =\begin{pmatrix}
0 & 0 \\
0 & 1 \end{pmatrix} \, ,\quad \lim_{z \to \infty} \Pi^u(z) =\begin{pmatrix}
1 & 0 \\
0 & 0 \end{pmatrix} \, .
$$
We can either write the vector space $\E^u (z)$ as the kernel of the matrix $\Pi^s(z)$ or as the kernel of 
the linear form
$$
x =\begin{pmatrix}
x_1 \\
x_2 \end{pmatrix} \in \C^2 \longmapsto \dfrac{1}{\kappa_u(z)} \, x_1 -x_2 \, .
$$
Again, it is more convenient to use $\kappa_u(z)^{-1}$ instead of $\kappa_u(z)$ because in our framework, 
$\kappa_u(z)^{-1}$ always has a limit as $z$ tends to infinity.

The Laurent series of both $\kappa_s$ and $\kappa_u^{-1}$ can be obtained by starting from the relations
$$
a_{-1}(z) +a_0(z) \, \kappa_s(z) +a_1(z) \, \kappa_s(z)^2 =0 \, ,\quad 
a_{-1}(z) \, \kappa_u(z)^{-2} +a_0(z) \, \kappa_u(z)^{-1} +a_1(z) =0 \, ,
$$
and by identifying inductively the coefficients in the series
$$
\kappa_s(z) =\sum_{n \ge 0} \dfrac{\kappa_{s,n}}{z^n} \, ,\quad 
\kappa_u(z)^{-1} =\sum_{n \ge 0} \dfrac{\widetilde{\kappa}_{u,n}}{z^n} \, .
$$
For the numerical scheme considered in \cite{ehrhardt-arnold}, one can get an explicit expression of the 
coefficients in terms of the Legendre polynomials. This expression does not come from the general method 
we propose. For explicit schemes, both $\kappa_{s,0}$ and $\widetilde{\kappa}_{u,0}$ are zero, and both 
$\kappa_{s,1}$ and $\widetilde{\kappa}_{u,1}$ are nonzero. One can also determine the Laurent series 
expansion of $\kappa_u$ by writing (the coefficient $\kappa_{u,1}$ being nonzero for explicit schemes):
$$
\kappa_u(z) =\kappa_{u,1} \, z +\sum_{n \ge 0} \dfrac{\kappa_{u,n}}{z^n} \, ,
$$
by plugging this series into the equation
$$
a_{-1}(z) +a_0(z) \, \kappa_u(z) +a_1(z) \, \kappa_u(z)^2 =0 \, ,
$$
and by identifying inductively the coefficients $\kappa_{u,n}$. At least theoretically speaking, this gives access 
to the Laurent series expansion of any useful quantity involving $\kappa_s$ and $\kappa_u$. In particular, we 
have access to the Laurent series of $\Pi^s$, which gives rise in the original physical variables to the relations 
(recall here that we consider a one-dimensional problem so the Fourier multipliers $\bpi_n$ reduce in fact to 
$2 \times 2$ matrices):
$$
\forall \, n \in \N \, ,\quad \forall \, j_1 \le 0 \, ,\quad 
\sum_{m=0}^n \Pi_{n-m} \, \begin{pmatrix}
u_{j_1+1}^m \\
u_{j_1}^m \end{pmatrix} =0 \, .
$$

Viewing the vector space $\E^u(z)$ as the kernel of a linear form, we may use the Laurent expansion of 
$\kappa_u^{-1}$ and rewrite equivalently the latter relations as
\begin{equation}
\label{transparent1}
\forall \, n \in \N \, ,\quad \forall \, j_1 \le 0 \, ,\quad 
u_{j_1}^n =\sum_{m=0}^n \widetilde{\kappa}_{u,n-m} \, u_{j_1+1}^m \, .
\end{equation}
Using the formulation \eqref{transparent1}, the new (though equivalent) way of writing the transparent 
boundary conditions in \eqref{transparent0} becomes
\begin{equation}
\label{transparent11}
\begin{cases}
{\dps \sum_{\sigma=0}^{s+1}} \, a_{-1,\sigma} \, u_{j-1}^{n+\sigma} +a_{0,\sigma} \, u_j^{n+\sigma} 
+a_{1,\sigma} \, u_{j+1}^{n+\sigma} =\Delta t \, F_j^{n+s+1} \, ,& n\ge 0 \, ,\quad j \ge 1 \, ,\\
u_0^{n+s+1} -\widetilde{\kappa}_{u,0} \, u_1^{n+s+1} 
={\dps \sum_{m=0}^{n+s}} \widetilde{\kappa}_{u,n+s+1-m} \, u_1^m +g^{n+s+1} \, , & n\ge 0 \, ,\\
(u_j^0,\dots,u_j^s) = (f_j^0,\dots,f_j^s) \, , & j \ge 0 \, .
\end{cases}
\end{equation}
For explicit schemes, $\widetilde{\kappa}_{u,0}$ is zero so the boundary condition in \eqref{transparent11} 
takes the form of a non-homogeneous Dirichlet boundary condition, whose source term is computed thanks 
to the trace at $j=1$ of the solution at earlier times.

\section{Solvability of the scheme with transparent boundary conditions}
\label{sect:3}

\noindent From now on, we analyze the numerical scheme \eqref{transparent0}. We first prove Lemma 
\ref{lem1} which characterizes the sequence of source terms for which one can construct a solution to 
the sequence of `convolution' equations \eqref{convolution}. Lemma \ref{lem1} will provide with necessary 
and sufficient compatibility conditions for solving \eqref{transparent0}.

\begin{proof}[Proof of Lemma \ref{lem1}]
The whole proof is based on the mere fact that $P_0$ is a projector. Hence a vector $y$ belongs to the range 
of $P_0$ if and only if $P_0 \, y=y$.

Let us first assume that the sequence $(y_n)$ is such that there exists a solution $(x_n)$ to \eqref{convolution}. 
For $n=0$, there holds $y_0 \in \text{\rm Im } P_0$ and therefore \eqref{compatibilitebord} holds for $n=0$. Let 
us assume by induction that \eqref{compatibilitebord} holds up to some integer $n$. Then we can write
$$
y_{n+1} -\sum_{m=0}^n P_{n+1-m} \, x_m =P_0 \, x_{n+1} \in \text{\rm Im } P_0 \, ,
$$
and consequently
\begin{align}
y_{n+1} -P_0 \, y_{n+1} &=\sum_{m=0}^n P_{n+1-m} \, x_m -\sum_{m=0}^n P_0 \, P_{n+1-m} \, x_m \notag \\
&= (P_{n+1} -P_0 \, P_{n+1}) \, x_0 +\sum_{m=1}^n (P_{n+1-m} -P_0 \, P_{n+1-m}) \, x_m \, .\label{prooflem1}
\end{align}
We now use the equations satisfied by the $P_n$'s and write
\begin{align*}
(P_{n+1} -P_0 \, P_{n+1}) \, x_0 &=P_{n+1} \, P_0 \, x_0 +\sum_{p=1}^n P_p \, P_{n+1-p} \, x_0 \\
&=P_{n+1} \, y_0 +\sum_{p=1}^n P_p \, P_{n+1-p} \, x_0 \, .
\end{align*}
We can therefore simplify \eqref{prooflem1} and obtain
\begin{equation}
\label{prooflem1'}
y_{n+1} -P_0 \, y_{n+1}-P_{n+1} \, y_0 =\sum_{m=1}^n (P_{n+1-m} -P_0 \, P_{n+1-m}) \, x_m 
+\sum_{p=1}^n P_p \, P_{n+1-p} \, x_0 \, .
\end{equation}
If $n=0$, the work is over since the sums on the right hand side of \eqref{prooflem1'} vanish and we have 
obtained \eqref{compatibilitebord} for $n+1=1$. Assuming $n \ge 1$ from now on, we substitute $P_{n+1-p} 
\, x_0$ in \eqref{prooflem1'}, $p=1,\dots,n$, for
$$
y_{n+1-p} -\sum_{k=0}^{n-p} P_{n+1-p-k} \, x_k \, ,
$$
by using \eqref{convolution}. Using the algebraic relations satisfied by the $P_k$'s and some manipulations 
on the indices, we end up with
$$
y_{n+1} -P_0 \, y_{n+1}-P_{n+1} \, y_0 =\sum_{p=1}^n P_p \, y_{n+1-p} \, ,
$$
which is nothing but \eqref{compatibilitebord} for $n+1$.

Let us now assume that the sequence $(y_n)$ satisfies the compatibility conditions \eqref{compatibilitebord}. 
Then one immediately sees that the sequence $(x_n)$ defined by $x_n :=y_n$ for all $n$ satisfies 
\eqref{convolution}. The proof of Lemma \ref{lem1} is thus complete.
\end{proof}

\noindent It remains to use Lemma \ref{lem1} for proving the solvability result of Proposition \ref{prop1}.

\begin{proof}[Proof of Proposition \ref{prop1}]
We first show that if there exists a solution to \eqref{transparent0}, then it is necessarily unique. Of course, 
by linearity, this amounts to showing that the only solution to
\begin{equation}
\label{transparent0'}
\begin{cases}
{\dps \sum_{\sigma=0}^{s+1}} \, Q_\sigma \, u_j^{n+\sigma} =0 \, ,& 
n\ge 0 \, ,\quad j_1 \ge 1 \, ,\\
{\dps \sum_{m=0}^{n+s+1}} \, \bpi_{n+s+1-m} \, \begin{pmatrix}
u_{(p_1,\cdot)}^m \\
\vdots \\
u_{(1-r_1,\cdot)}^m \end{pmatrix} = 0 \, , & n\ge 0 \, ,\\
(u_j^0,\dots,u_j^s) = (0,\dots,0) \, , & j_1 \ge 1-r_1 \, ,
\end{cases}
\end{equation}
is zero. We prove this property by induction on the time level. Let us assume that up to some level $n \ge 0$, 
there holds $u^0=\cdots=u^{n+s}=0$ (this property clearly holds for $n=0$ due to the initial data in 
\eqref{transparent0'}). Then $u^{n+s+1} \in \ell^2$ is a solution to
$$
\begin{cases}
Q_{s+1} \, u_j^{n+s+1} =0 \, ,& j_1 \ge 1 \, ,\\
\bpi_0 \, \begin{pmatrix}
u_{(p_1,\cdot)}^{n+s+1} \\
\vdots \\
u_{(1-r_1,\cdot)}^{n+s+1} \end{pmatrix} = 0 \, .& 
\end{cases}
$$
We apply a partial Fourier transform with respect to the tangential variables $j' \in \Z^{d-1}$, which yields, 
because $\bpi_0$ is a Fourier multiplier, see \eqref{defpin}:
$$
\begin{cases}
Q_{s+1}^\sharp (\bfeta) \, \widehat{u^{n+s+1}_{j_1}}(\bxi') =0 \, ,& j_1 \ge 1 \, ,\\
\Pi_0(\bfeta) \, \begin{pmatrix}
\widehat{u^{n+s+1}_{p_1}}(\bxi') \\
\vdots \\
\widehat{u^{n+s+1}_{1-r_1}}(\bxi') \end{pmatrix} = 0 \, ,&
\end{cases}
$$
where the `one-dimensional' finite difference operator $Q_{s+1}^\sharp$ is defined in \eqref{defqdiese}.

In the explicit case (case (i) in Assumption \ref{assumption3}), we compute $Q_{s+1}^\sharp (\bfeta)=I$, 
and we have seen in the proof of Theorem \ref{thm1} that the projector $\Pi_0(\bfeta)$ reduces to
$$
\Pi_0(\bfeta) =\begin{pmatrix}
0 & 0 \\
0 & I_{r_1} \end{pmatrix} \, .
$$
Hence we get $\widehat{u^{n+s+1}_{j_1}}(\bxi')=0$ for all $j_1 \ge 1-r_1$, and applying the inverse Fourier 
transform, we obtain $u^{n+s+1}=0$. Uniqueness follows.

In the implicit case (case (ii) in Assumption \ref{assumption3}), we know from the index condition \eqref{index} 
that a sequence $(w_{j_1})_{j_1 \ge 1-r_1}$ solution to the recurrence relation
\begin{equation}
\label{recurrencew}
\forall \, j_1 \ge 1 \, ,\quad Q_{s+1}^\sharp (\bfeta) \, w_{j_1} =0 \, ,
\end{equation}
belongs to $\ell^2$ if and only if its `initial condition' $(w_{p_1},\dots,w_{1-r_1})^T$ belongs to the stable 
subspace $\E^s(\infty,\bfeta)$ of the matrix $\M(\infty,\bfeta)$ whose expression is given in \eqref{limMinfty}. 
The matrix $\M(\infty,\bfeta)$ is of course the companion matrix that arises when one rewrites the recurrence 
\eqref{recurrencew} as a first order recurrence relation for the augmented vector $(w_{j_1+p_1-1},\dots, 
w_{j_1-r_1})^T$. Moreover, we know from the proof of Theorem \ref{thm1} that the projector $\Pi_0(\bfeta)$, 
which is the limit of $\Pi^s(z,\bfeta)$ as $z$ tends to infinity, is precisely the projection on $\E^s(\infty,\bfeta)$ 
with kernel $\E^u(\infty,\bfeta)$. Since the sequence $u^{n+s+1}$ belongs to $\ell^2([1-r_1,+\infty) \times 
\Z^{d-1})$, the partial Fourier transform $(\widehat{u^{n+s+1}_{j_1}}(\bxi'))_{j_1 \ge 1-r_1}$ belongs 
to $\ell^2$ for almost every $\bxi'$. We therefore get $(\widehat{u^{n+s+1}_{p_1}}(\bxi'), \dots, 
\widehat{u^{n+s+1}_{1-r_1}}(\bxi'))=0$ for almost every $\bxi'$, which yields $u^{n+s+1}=0$ by 
inverse Fourier transform. Uniqueness of a solution to \eqref{transparent0} follows by induction 
on $n$.
\bigskip

It remains to show that there exists a solution to \eqref{transparent0}, provided that the necessary 
compatibility conditions described in Proposition \ref{prop1} are satisfied. We thus assume that the 
source terms $g^n$ in \eqref{transparent0} satisfy \eqref{compatibilitegn} (for $n \ge s+1$, $g^n$ refers 
to the boundary forcing term in \eqref{transparent0}, and for $0 \le n \le s$, $g^n$ refers to the sequence 
constructed from the initial data as in the statement of Proposition \ref{prop1}). Let us assume that up to 
some time level $n+s$, with $n \ge 0$, we have constructed the sequences $u^0,\dots,u^{n+s}$, with
\begin{align*}
& (u^0,\dots,u^s) =(f^0,\dots,f^s) \, ,\quad \text{\rm (initial data),} \\
\forall \, n'=1,\dots,n \, ,\quad & \sum_{k=0}^{n'+s} \, \bpi_{n'+s-k} \, \begin{pmatrix}
u_{(p_1,\cdot)}^m \\
\vdots \\
u_{(1-r_1,\cdot)}^m \end{pmatrix} = g^{n'+s} \, ,\quad \text{\rm (boundary conditions),} \\
\forall \, n'=1,\dots,n \, , \, \forall \, j_1 \ge 1 \, ,\quad 
& \sum_{\sigma=0}^{s+1} \, Q_\sigma \, u_j^{n'+\sigma-1} =\Delta t \, F_j^{n'+s} \, ,\quad 
\text{\rm (numerical scheme).}
\end{align*}
We wish to construct a sequence $u^{n+s+1}$ solution to
\begin{equation}
\label{problemun}
\begin{cases}
Q_{s+1} \, u_j^{n+s+1} =\Delta t \, F_j^{n+s+1} -{\dps \sum_{\sigma=0}^s} \, Q_\sigma \, u_j^{n+\sigma} 
\, ,& j_1 \ge 1 \, ,\\
\bpi_0 \, \begin{pmatrix}
u_{(p_1,\cdot)}^{n+s+1} \\
\vdots \\
u_{(1-r_1,\cdot)}^{n+s+1} \end{pmatrix} = g^{n+s+1} -{\dps \sum_{m=0}^{n+s}} \, \bpi_{n+s+1-m} \, 
\begin{pmatrix}
u_{(p_1,\cdot)}^m \\
\vdots \\
u_{(1-r_1,\cdot)}^m \end{pmatrix} \, .& 
\end{cases}
\end{equation}
Let us first observe that, using Lemma \ref{lem1}, we already know that the sequence
$$
g^{n+s+1} -\sum_{m=0}^{n+s} \bpi_{n+s+1-m} \, \begin{pmatrix}
u_{(p_1,\cdot)}^m \\
\vdots \\
u_{(1-r_1,\cdot)}^m \end{pmatrix} \, ,
$$
belongs to the range of the projector $\bpi_0$ (one can for instance reproduce similar calculations as in 
the proof of Lemma \ref{lem1} and prove that this sequence belongs to the kernel of $\bpi_0-I$). We can 
therefore look for the solution $u^{n+s+1}$ to \eqref{problemun} under the form $u^{n+s+1} =v+w$, with
$$
\begin{pmatrix}
v_{(p_1,\cdot)} \\
\vdots \\
v_{(1-r_1,\cdot)} \end{pmatrix} := g^{n+s+1} -{\dps \sum_{m=0}^{n+s}} \, \bpi_{n+s+1-m} \, 
\begin{pmatrix}
u_{(p_1,\cdot)}^m \\
\vdots \\
u_{(1-r_1,\cdot)}^m \end{pmatrix} \, ,
$$
and for instance $v_{(j_1,\cdot)}:=0$ if $j_1>p_1$. We are then reduced to showing that, for some 
sequence $(\widetilde{F}_j) \in \ell^2$ whose expression is not useful, there exists a sequence $w 
\in \ell^2$ solution to
\begin{equation}
\label{problemun'}
\begin{cases}
Q_{s+1} \, w_j =\widetilde{F}_j \, ,& j_1 \ge 1 \, ,\\
\bpi_0 \, \begin{pmatrix}
w_{(p_1,\cdot)} \\
\vdots \\
w_{(1-r_1,\cdot)} \end{pmatrix} = 0 \, .& 
\end{cases}
\end{equation}
For simplicity, we drop the tilde on the source term in \eqref{problemun'}. We construct the partial Fourier 
transform of the solution $w$ to \eqref{problemun'} rather than $w$ itself. Namely, we are going to construct 
a solution to
\begin{equation}
\label{problemun''}
\begin{cases}
Q_{s+1}^\sharp(\bfeta) \, \widehat{w}_{j_1}(\bxi') =\widehat{F}_{j_1}(\bxi') \, ,& j_1 \ge 1 \, ,\\
\Pi_0(\bfeta) \, \begin{pmatrix}
\widehat{w}_{p_1}(\bxi') \\
\vdots \\
\widehat{w}_{1-r_1}(\bxi') \end{pmatrix} = 0 \, .& 
\end{cases}
\end{equation}

Let us first deal with the explicit case. In that case, one simply has to set
$$
\widehat{w}_{j_1}(\bxi') :=\begin{cases}
0 \, , & \text{\rm if } j_1 \le 0 \, ,\\
\widehat{F}_{j_1}(\bxi') \, , & \text{\rm if } j_1 \ge 1 \, ,
\end{cases}
$$
apply inverse Fourier transform and obtain a solution $w \in \ell^2$ to \eqref{problemun'}. We therefore 
focus on the implicit case. With the vector ${\mathcal W}_{j_1}(\bxi') :=(\widehat{w}_{j_1+p_1-1}(\bxi'), 
\dots, \widehat{w}_{j_1-r_1}(\bxi'))^T$, the system \eqref{problemun''} is equivalently rewritten as
\begin{equation}
\label{problemun'''}
\begin{cases}
{\mathcal W}_{j_1+1}(\bxi') -\M(\infty,\bfeta) \, {\mathcal W}_{j_1}(\bxi') =a_{\infty,p_1}(\bfeta)^{-1} \, 
(\widehat{F}_{j_1}(\bxi'),0,\dots,0)^T \, ,& j_1 \ge 1 \, ,\\
{\mathcal W}_1(\bxi') \in \E^u (\infty,\bfeta) \, .& 
\end{cases}
\end{equation}
The unique $\ell^2$ solution to this problem is written explicitly by using the spectral projectors $\Pi_0(\bfeta)$ 
and $I-\Pi_0(\bfeta)$. We obtain the formula
\begin{align*}
{\mathcal W}_{j_1}(\bxi') =& \, a_{\infty,p_1}(\bfeta)^{-1} \, \sum_{k=1}^{j_1-1} 
\Big( \M(\infty,\bfeta) \, \Pi_0(\bfeta) \Big)^{j_1-1-k} \, (\widehat{F}_{j_1}(\bxi'),0,\dots,0)^T \\
&-a_{\infty,p_1}(\bfeta)^{-1} \, \sum_{k \ge j_1} \Big( \M(\infty,\bfeta)^{-1} \, (I- \Pi_0(\bfeta)) \Big)^{k+1-j_1} 
\, (\widehat{F}_{j_1}(\bxi'),0,\dots,0)^T \, ,
\end{align*}
where the first line corresponds to the component of ${\mathcal W}_{j_1}$ on $\E^s(\infty,\bfeta)$ 
and the second line to the component on $\E^u(\infty,\bfeta)$. The latter formula defines a solution 
to \eqref{problemun'''} and the last thing to prove is that it belongs to $\ell^2$.

The matrix $\M(\infty,\bfeta)$ in \eqref{limMinfty} depends periodically and analytically on $\bfeta$. 
Moreover it has no eigenvalue on $\cercle$ for any $\bfeta$ so we have the bounds
$$
\| (\M(\infty,\bfeta) \, \Pi_0(\bfeta))^k \| \le C \, r^k \, ,\quad 
\| (\M(\infty,\bfeta)^{-1} \, (I-\Pi_0(\bfeta)))^k \| \le C \, r^k \, ,
$$
with $C>0$ and $r \in (0,1)$, uniformly with respect to $\bfeta \in \R^{d-1}$. These bounds express the 
exponential decay of the stable components of $\M$ and of the inverse of the unstable components. By 
standard $\ell^1 \star \ell^2$ convolution bounds, we obtain
$$
\sum_{j_1 \ge 1} |{\mathcal W}_{j_1}(\bxi')|^2 \le C \, \sum_{j_1 \ge 1} |\widehat{F}_{j_1}(\bxi')|^2 \, ,
$$
with a constant $C$ that is uniform with respect to the frequency $\bxi'$. This means that the inverse 
Fourier transform of (the appropriate coordinate of) $({\mathcal W}_{j_1}(\bxi'))$ provides with a 
sequence $w \in \ell^2$ that is a solution to \eqref{problemun'}. The proof of Proposition \ref{prop1} 
is thus complete.
\end{proof}

In the case $d=1$, we can also get a solvability result for the reduction of \eqref{cauchy} to an interval 
when one enforces transparent numerical boundary conditions at each end. Namely, we can reproduce 
the analysis of Section \ref{sect:2} when truncating the initial domain $\Z$ `on the right' rather than 
`on the left'. Introducing some given integer $J \ge p+r+2$, the solution to \eqref{cauchy} with initial data 
$f^0,\dots,f^s \in \ell^2(\Z)$ vanishing outside of the interval $[p+1,J-r-1]$ satisfies \eqref{relations} 
together with
$$
\forall \, n \in \N \, ,\quad \forall \, j \ge J \, ,\quad 
\sum_{m=0}^n \widetilde{\bpi}_{n-m} \, \begin{pmatrix}
u_{(j+p,\cdot)}^m \\
\vdots \\
u_{(j+1-r,\cdot)}^m \end{pmatrix} =0 \, ,\quad \text{\rm with } \quad 
\widetilde{\bpi}_n := \delta_{n0} \, I-\bpi_n \, .
$$
The expression of the matrices $\widetilde{\bpi}_n$ comes from the Laurent series expansion of $\Pi^u(z) 
=I-\Pi^s(s)$. In particular, the restriction of \eqref{cauchy} to the interval $[1-r,J+p]$ reads (recall that we 
consider the one-dimensional case here):
\begin{equation}
\label{interval}
\begin{cases}
{\dps \sum_{\sigma=0}^{s+1}} \, Q_\sigma \, u_j^{n+\sigma} =\Delta t \, F_j^{n+s+1} \, ,& 
n\ge 0 \, , \, j=1,\dots,J \, ,\\
{\dps \sum_{m=0}^{n+s+1}} \bpi_{n+s+1-m} \, \begin{pmatrix}
u_p^m \\
\vdots \\
u_{1-r}^m \end{pmatrix} = g_\ell^{n+s+1} \, , & n\ge 0 \, ,\\
{\dps \sum_{m=0}^{n+s+1}} \widetilde{\bpi}_{n+s+1-m} \, \begin{pmatrix}
u_{J+p}^m \\
\vdots \\
u_{J+1-r}^m \end{pmatrix} = g_r^{n+s+1} \, , & n\ge 0 \, ,\\
(u^0_j,\dots,u^s_j) = (f^0_j,\dots,f^s_j) \, , & j=1-r,\dots,J+p \, .
\end{cases}
\end{equation}
Up to compatibility conditions for the boundary source terms $g_\ell^n,g_r^n$ at the left and right ends 
of the interval, which will take the form \eqref{compatibilitegn} (or a similar one at the right end), the main 
issue for constructing the solution to \eqref{interval} is to prove existence for the linear problem:
$$
\begin{cases}
{\dps \sum_{\ell=-r}^p} a_{\ell,s+1} (\Delta t,\Delta x) \, u_{j+\ell} =F_j \, , & j=1,\dots,J \, ,\\
\bpi_0 \, \begin{pmatrix}
u_p \\
\vdots \\
u_{1-r} \end{pmatrix} = g_\ell \, , \quad (I-\bpi_0) \, \begin{pmatrix}
u_{J+p} \\
\vdots \\
u_{J+1-r} \end{pmatrix} = g_r \, . &
\end{cases}
$$
Since the problem is now {\it finite dimensional}, we are reduced to proving that the only solution 
to the homogeneous problem with $(F_j)_{j=1,\dots,J}=0$, $g_\ell=g_r=0$, is zero. The result 
is straightforward in the explicit case since the recurrence relation reduces to $u_j=0$ for 
$j=1,\dots,J$ and because of the expression of $\bpi_0$ and $I-\bpi_0$, see Theorem \ref{thm1}. 
We thus turn to the implicit case. With the notation of the proof of Theorem \ref{thm1}, the problem 
under study reads
$$
U_{j+1}=\M(\infty) \, U_j \, ,\quad j=1,\dots,J \, ,
$$
with the endpoint conditions
$$
\bpi_0 \, U_1 =0 \, ,\quad (I-\bpi_0) \, U_{J+1}=0 \, .
$$
In the one-dimensional implicit case, $\bpi_0$ is the projector on the stable subspace of $\M(\infty)$ 
parallel to the unstable subspace. Both subspaces form a direct sum of $\C^{p+r}$ and are invariant 
by $\M(\infty)$. Hence we have $U_1 \in \E^u(\infty)$, $U_{J+1} \in \E^s(\infty)$ and those conditions 
propagate to any $j=1,\dots,J+1$ by the recurrence relation $U_{j+1}=\M(\infty) \, U_j$. The only 
possibility for all these conditions to hold is to have $U_j=0$ for all $j$, which shows unique 
solvability for \eqref{interval} and explains why all the problems considered in 
\cite{BELV,BMN,ehrhardt-arnold,zheng-wen-han,zisowski-ehrhardt} are indeed solvable. These 
examples are quickly reviewed in Section \ref{sect:examples} below.

\section{Strong stability and semigroup estimate. Proof of Theorem \ref{thm2}}
\label{sect:4}

Our goal in this Section is to prove Theorem \ref{thm2}. In the first two paragraphs, we show that when 
the scheme \eqref{cauchy} is non-glancing, then \eqref{transparent0} is strongly stable and furthermore 
satisfies the estimate \eqref{estimation}. In the last paragraph, we show that the non-glancing condition 
is necessary for strong stability.

\subsection{Strong stability under the non-glancing condition}

Let us recall that from now on, we assume that each ratio $\Delta t/\Delta x_i$, $i=1,\dots,d$ is constant, 
and that each operator $Q_\sigma$ depends on the discretization parameters only through these ratios. 
In addition to Assumptions \ref{assumption0}, \ref{assumption1}, \ref{assumption2'} and \ref{assumption3}, 
we also assume in all this paragraph that the scheme \eqref{cauchy} is non-glancing. We can then follow 
some of the analysis in \cite{jfc} and use the following result from \cite{jfcsinum}.

\begin{theorem}[Block reduction of $\M$]
\label{thmjfc1}
Let Assumptions \ref{assumption0}, \ref{assumption1}, \ref{assumption2'} and \ref{assumption3} be satisfied, 
and assume that the scheme \eqref{cauchy} is non-glancing. Then for all $\underline{z} \in \Ubar$ and all 
$\underline{\bfeta} \in \R^{d-1}$, there exists an open neighborhood ${\mathcal O}$ of $(\underline{z}, 
\underline{\bfeta})$ in $\C \times \R^{d-1}$ and there exists an invertible matrix $T(z,\bfeta)$ that is 
holomorphic/analytic with respect to $(z,\bfeta) \in {\mathcal O}$ such that:
\begin{equation*}
\forall \, (z,\bfeta) \in {\mathcal O} \, ,\quad T(z,\bfeta)^{-1}\, \M(z,\bfeta) \, T(z,\bfeta) =\begin{pmatrix}
\M_1(z,\bfeta) & & 0 \\
& \ddots & \\
0 & & \M_L(z,\bfeta) \end{pmatrix} \, ,
\end{equation*}
where the number $L$ of diagonal blocks and the size $\nu_\ell$ of each block $\M_\ell$ do not depend 
on $(z,\bfeta) \in {\mathcal O}$, and where each block satisfies one of the following three properties:
   \begin{itemize}
      \item  there exists $\delta>0$ such that for all $(z,\bfeta) \in {\mathcal O}$, $\M_\ell(z,\bfeta)^* \, 
             \M_\ell(z,\bfeta) \ge (1+\delta) \, I$,
      \item  there exists $\delta>0$ such that for all $(z,\bfeta) \in {\mathcal O}$, $\M_\ell(z,\bfeta)^* \, 
             \M_\ell(z,\bfeta) \le (1-\delta) \, I$,
      \item  $\nu_\ell=1$, $\underline{z}$ and $\M_\ell(\underline{z},\underline{\bfeta})$ belong to $\cercle$, 
             and $\underline{z} \, \partial_z \M_\ell (\underline{z},\underline{\bfeta}) \, \overline{\M_\ell 
             (\underline{z},\underline{\bfeta})} \in \R \setminus \{ 0 \}$.
   \end{itemize}
\end{theorem}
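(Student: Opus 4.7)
The plan is to combine holomorphic spectral perturbation theory \`a la Kato with a Kreiss-type symmetrizer, and to extract the unit-modulus blocks from the geometric regularity guaranteed by Lemma \ref{lem0}. Fix $(\underline{z},\underline{\bfeta}) \in \Ubar \times \R^{d-1}$ and partition the eigenvalues of $\M(\underline{z},\underline{\bfeta})$ according to their position relative to $\cercle$. By Lemma \ref{lem2}, no eigenvalue lies on $\cercle$ when $\underline{z}\in\U$; only in the case $\underline{z}\in\cercle$ can there be unit-modulus eigenvalues. I would build the blocks $\M_\ell$ cluster by cluster.

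For a cluster sitting strictly inside $\D$ or strictly outside $\Dbar$, I would enclose it by a small Jordan curve disjoint from the rest of the spectrum and use the corresponding Riesz projector $\tfrac{1}{2\, i\, \pi}\oint (w\, I-\M(z,\bfeta))^{-1}\, {\rm d}w$; on a small enough neighborhood ${\mathcal O}$ this depends holomorphically on $z$ and analytically on $\bfeta$, and its range is an invariant subspace of fixed dimension whose spectrum stays in a fixed compact subset of $\D$ (resp.\ $\C \setminus \Dbar$). To upgrade these spectral bounds to the quantitative estimates $\M_\ell^*\, \M_\ell \le (1-\delta)\, I$ or $\ge (1+\delta)\, I$, I would invoke the converse Kreiss matrix theorem: a matrix $A$ with $\text{\rm sp}(A) \subset \{|w|<1\}$ admits a positive definite Hermitian $H$ solving a Stein inequality $H - A^*\, H\, A \ge \eta\, I$, and performing the change of basis by a Cholesky factor of $H$ converts it into the claimed contraction estimate; applying the argument to $\M_\ell^{-1}$ gives the expansion bound for unstable clusters. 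Uniformity on ${\mathcal O}$ follows from the continuous dependence of $H$ on parameters, which is straightforward once the spectrum is cleanly separated.

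For each eigenvalue $\underline{\kappa}_1 \in \cercle$ of $\M(\underline{z},\underline{\bfeta})$, I would set $\underline{\bkappa} := (\underline{\kappa}_1,{\rm e}^{i\, \underline{\eta}_2},\ldots,{\rm e}^{i\, \underline{\eta}_d}) \in (\cercle)^d$, so that $\underline{z} \in \text{\rm sp}\, ({\mathcal A}(\underline{\bkappa})) \cap \cercle$. Lemma \ref{lem0} then supplies a holomorphic function $\lambda$ on a neighborhood of $\underline{\bkappa}$ with $\lambda(\underline{\bkappa})=\underline{z}$, and Definition \ref{def2} yields $\partial_{\kappa_1}\lambda(\underline{\bkappa}) \ne 0$. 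The holomorphic implicit function theorem then solves the equation $\lambda(\kappa_1,{\rm e}^{i\, \eta_2},\dots,{\rm e}^{i\, \eta_d}) = z$ as $\kappa_1 = \kappa_\ell(z,\bfeta)$ holomorphic near $(\underline{z},\underline{\bfeta})$. Since $\underline{\kappa}_1$ is a simple eigenvalue of the companion matrix $\M(\underline{z},\underline{\bfeta})$ (geometric simplicity is automatic for companion matrices, and algebraic simplicity on $\cercle$ follows from Lemma \ref{lem0}), the associated Riesz projector has rank one and yields a scalar block $\M_\ell(z,\bfeta) = \kappa_\ell(z,\bfeta)$. Carrying this out for each unit-modulus eigenvalue, alongside the stable and unstable blocks from the previous step, produces the full decomposition with $L$ and the $\nu_\ell$'s locally constant on ${\mathcal O}$.

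The main obstacle, and the only place where Assumption \ref{assumption1} is used decisively, is the statement $\underline{z}\, \partial_z \M_\ell(\underline{z},\underline{\bfeta})\, \overline{\M_\ell(\underline{z},\underline{\bfeta})} \in \R \setminus \{0\}$ for each scalar block. Differentiating $\lambda(\kappa_\ell(z,\bfeta),\dots) = z$ yields $\partial_z \kappa_\ell = 1/\partial_{\kappa_1}\lambda(\underline{\bkappa})$ at the base point, which together with the non-glancing hypothesis shows non-vanishing. The delicate part is the reality claim: Lemma \ref{lem0} ensures $|\lambda(\bkappa)| \le 1$ on $(\cercle)^d$ with equality at $\underline{\bkappa}$, so the curve $t \mapsto (\underline{\kappa}_1\, {\rm e}^{i\, t},{\rm e}^{i\, \underline{\eta}_2},\dots,{\rm e}^{i\, \underline{\eta}_d})$ exhibits $t=0$ as a local maximum of $|\lambda|^2$; the vanishing of the first derivative at $t=0$ produces $\text{\rm Im}\big[\underline{\kappa}_1\, \overline{\underline{z}}\, \partial_{\kappa_1}\lambda(\underline{\bkappa})\big] = 0$, which combined with the formula for $\partial_z \kappa_\ell$ and $|\underline{z}| = |\underline{\kappa}_1| = 1$ yields the desired reality. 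The remaining work is bookkeeping: ensuring that the Riesz projectors, Stein symmetrizers, and scalar blocks can all be patched on a single neighborhood ${\mathcal O}$.
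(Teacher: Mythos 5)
Your route is, in substance, the same one the paper relies on: the paper does not prove Theorem \ref{thmjfc1} itself but quotes it from \cite{jfcsinum}, and the proof there (specialized to the non-glancing case) is exactly the combination you describe — Riesz projectors depending holomorphically on $z$ and analytically on $\bfeta$ for the spectral clusters of $\M$ that stay off $\cercle$, a discrete Lyapunov/Stein symmetrizer turned into the quantitative bounds $\M_\ell^*\M_\ell \le (1-\delta)I$ or $\ge (1+\delta)I$ by a constant change of basis fixed at $(\underline{z},\underline{\bfeta})$, and scalar blocks for the unit-modulus eigenvalues obtained from the holomorphic eigenvalue $\lambda$ of ${\mathcal A}$ via the implicit function theorem, with the reality of $\underline{z}\,\partial_z\M_\ell\,\overline{\M_\ell}$ coming from the first-order criticality of $|\lambda|^2$ along the $\kappa_1$-circle (your identity $\text{Im}\big[\overline{\underline{z}}\,\underline{\kappa}_1\,\partial_{\kappa_1}\lambda(\underline{\bkappa})\big]=0$ combined with $\partial_z\kappa_\ell=1/\partial_{\kappa_1}\lambda$ does give the claim, since $|\underline z|=|\underline{\kappa}_1|=1$).

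The one concrete misstep is your justification of the simplicity of a unit-modulus eigenvalue $\underline{\kappa}_1$ of $\M(\underline{z},\underline{\bfeta})$: it does \emph{not} follow from Lemma \ref{lem0}. That lemma gives simplicity of $\underline{z}$ as a root in $z$ of the dispersion relation for fixed $\bkappa\in(\cercle)^d$, whereas what you need is simplicity of $\underline{\kappa}_1$ as a root in $\kappa_1$ for fixed $\underline{z}$; these are different multiplicities, and Theorem \ref{thmg} (the glancing case, where $\M_g(\underline{z},\underline{\bfeta})$ is a Jordan block of size $m\ge 2$ with unit-modulus eigenvalue) shows that under Assumptions \ref{assumption0}--\ref{assumption3} alone the $\kappa_1$-multiplicity can exceed $1$. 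The correct argument uses the non-glancing hypothesis itself: geometric regularity factorizes the dispersion relation near $(\underline{\kappa}_1,\underline{z})$ as a nonvanishing factor times $z-\lambda(\kappa_1,{\rm e}^{i\underline{\eta}_2},\dots,{\rm e}^{i\underline{\eta}_d})$ (up to the paper's sign convention), so the algebraic multiplicity of $\underline{\kappa}_1$ as an eigenvalue of $\M(\underline{z},\underline{\bfeta})$ equals the order of vanishing of $\lambda(\cdot)-\underline{z}$ at $\underline{\kappa}_1$, which is $1$ precisely because $\partial_{\kappa_1}\lambda(\underline{\bkappa})\neq 0$. With this repaired, the rank-one Riesz projector, the scalar block $\kappa_\ell(z,\bfeta)$, and the rest of your construction (including the patching on a common neighborhood ${\mathcal O}$, where Assumption \ref{assumption2'} guarantees $\M$ extends holomorphically slightly inside $\cercle$) go through; also note that Assumption \ref{assumption1} is already used before the reality claim, through Lemma \ref{lem2} and the bound $|\lambda|\le 1$ on $(\cercle)^d$.
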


\noindent A Corollary of Theorem \ref{thmjfc1} is a (unique) continuation result for the stable and unstable 
subspaces of $\M(z,\bfeta)$.

\begin{corollary}
\label{cor1}
Let Assumptions \ref{assumption0}, \ref{assumption1}, \ref{assumption2'} and \ref{assumption3} be satisfied, 
and assume that the scheme \eqref{cauchy} is non-glancing. Then there exists $\delta>0$ such that the 
stable and unstable subspaces $\E^s,\E^u$ of $\M$ extend holomorphically/analaytically/periodically over 
$\{ \zeta \in \C \, , \, |\zeta|>1-2\, \delta \} \times \R^{d-1}$. In particular, there holds
$$
\sum_{n \ge 0} \dfrac{1}{(1-\delta)^n} \, \sup_{\bfeta \in \R^{d-1}} \| \Pi_n(\bfeta) \| <+\infty \, .
$$
Moreover, for all $z \in \C$ with $|z|>1-2\, \delta$ and $\bfeta \in \R^{d-1}$, the direct sum 
\eqref{decomposition1} holds. 
\end{corollary}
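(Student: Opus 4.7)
The plan is to combine the local block reduction of Theorem~\ref{thmjfc1} with a compactness argument on $\cercle \times (\R/2\pi\Z)^{d-1}$. At each point $(\underline{z}, \underline{\bfeta})$ with $\underline{z} \in \cercle$, Theorem~\ref{thmjfc1} provides a holomorphic/analytic invertible change of basis $T$ putting $\M$ in block-diagonal form with blocks of three types. By Lemma~\ref{lem2}, $\M(z,\bfeta)$ has no eigenvalue on $\cercle$ when $z \in \U$, so type~1 (expanding) and type~2 (contracting) blocks keep their spectral character on a full neighborhood of $(\underline{z},\underline{\bfeta})$ in $\C \times \R^{d-1}$, in particular for $z$ slightly inside $\D$. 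Only the scalar type~3 blocks, which necessarily lie on $\cercle$ at $\underline{z}$, require care.

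For each type~3 block $\M_\ell$, the real number $\underline{z}\, \partial_z \M_\ell(\underline{z},\underline{\bfeta})\, \overline{\M_\ell(\underline{z},\underline{\bfeta})}$ coincides, up to a factor of $2$, with the radial derivative $\partial_r |\M_\ell(r\underline{z},\underline{\bfeta})|^2$ at $r=1$. Its nonzero sign thus determines on which side of $\cercle$ the scalar $\M_\ell(z,\bfeta)$ lies when $z$ slightly exceeds $\underline{z}$ in modulus: if positive, $|\M_\ell|>1$ on the $\U$ side and the block contributes to $\E^u$; if negative, $|\M_\ell|<1$ and the block contributes to $\E^s$. This classification of each type~3 block as ``formerly stable'' or ``formerly unstable'' extends across $\cercle$ without ambiguity, and the associated projector---obtained by placing the appropriate diagonal entries in the block-diagonal basis furnished by $T$---is holomorphic on the full neighborhood of Theorem~\ref{thmjfc1} and agrees with $\Pi^{s,u}$ on its intersection with $\U \times \R^{d-1}$.

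Local extensions obtained at different base points agree on overlaps by uniqueness of holomorphic continuation from their common part in $\U \times \R^{d-1}$, which is a nonempty open set. Compactness of $\cercle \times (\R/2\pi\Z)^{d-1}$ then yields finitely many neighborhoods whose union covers a slab $\{1-2\delta < |\zeta| \le 1+2\delta\} \times \R^{d-1}$ for some uniform $\delta>0$; combined with the original domain $\U \times \R^{d-1}$, this produces the extension to $\{|\zeta|>1-2\delta\} \times \R^{d-1}$. The identities $\Pi^s + \Pi^u = I$ and $\Pi^s \, \Pi^u = 0$ propagate holomorphically, yielding the direct sum \eqref{decomposition1} on the enlarged domain.

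For the growth bound, the change of variables $w=1/z$ converts $\Pi^s(z,\bfeta) = \sum_{n \ge 0} z^{-n}\Pi_n(\bfeta)$ into a Taylor series in $w$ holomorphic on $\{|w|<(1-2\delta)^{-1}\}$ and bounded uniformly in $\bfeta$ on compact subsets (by continuity and $2\pi$-periodicity in $\bfeta$). Cauchy's estimates on the circle $|w|=(1-3\delta/2)^{-1}<(1-2\delta)^{-1}$ give $\sup_{\bfeta}\|\Pi_n(\bfeta)\| \le C\,(1-3\delta/2)^n$, so $(1-\delta)^{-n}\sup_\bfeta \|\Pi_n(\bfeta)\|$ is dominated by a geometric series of ratio $(1-3\delta/2)/(1-\delta)<1$, establishing the summability claim. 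The hard part is paragraph~2: one must verify that the transversal crossing supplied by the non-glancing condition really translates into a coherent holomorphic assignment of each type~3 block to $\E^s$ or $\E^u$, matching the spectral definition already in force on $\U \times \R^{d-1}$.
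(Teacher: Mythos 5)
Your proof is correct and follows essentially the route the paper intends: Corollary \ref{cor1} is presented there as an immediate consequence of the block reduction of Theorem \ref{thmjfc1} (from \cite{jfcsinum}), i.e.\ in the absence of glancing blocks the expanding/contracting blocks together with the transversally crossing scalar blocks (classified by the sign of the real number $\underline{z}\,\partial_z \M_\ell\,\overline{\M_\ell}$, exactly as you argue) define $\E^{s,u}$ and $\Pi^{s,u}$ across $\cercle$, and the geometric decay of the $\Pi_n(\bfeta)$ then follows from Cauchy estimates on the enlarged annulus, uniformly in $\bfeta$ by periodicity. The one step to tighten is the gluing: a connected component of the overlap of two chart neighborhoods need not meet $\U\times\R^{d-1}$, so you should first restrict to a thin collar $\{1-2\delta<|z|\le 1+2\delta\}$ covered by suitably shrunken charts (Lebesgue-number argument), after which the overlap relevant to any collar point contains a connected ball meeting $\U\times\R^{d-1}$ and the identity theorem yields the consistency you invoke.
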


Since the direct sum \eqref{decomposition1} holds up to $|z|=1$ (and even a little beyond the unit circle), 
we can follow the theory in \cite{gks}, see also \cite{jfcnotes} for a thorough exposition, and try to prove 
strong stability for \eqref{transparent0} by verifying the so-called Uniform Kreiss-Lopatinskii condition (the 
main result in \cite{gks} is to show that this algebraic condition is actually {\it equivalent} to strong stability). 
Verifying the Uniform Kreiss-Lopatinskii condition amounts to first performing a Laplace-Fourier transform 
in the time and tangential space variables, which reduces \eqref{transparent0} to the recurrence relation
\begin{equation}
\label{resolvent}
\begin{cases}
{\dps \sum_{\ell_1=-r_1}^{p_1}} a_{\ell_1}(z,\bfeta) \, w_{j_1+\ell_1} =F_{j_1} \, , & j_1 \ge 1 \, ,\\
\Pi^s (z,\bfeta) \, \begin{pmatrix}
w_{p_1} \\
\vdots \\
w_{1-r_1} \end{pmatrix} =G \, . &
\end{cases}
\end{equation}
Then the goal is to show that when $|z| \ge 1$, there is no non-trivial solution to the homogeneous equation 
\eqref{resolvent} (obtained with $(F_{j_1})=0$, $G=0$). The solutions of interest are those that belong to 
$\ell^2$ when $z$ belongs to $\U$, and those whose initial data $(w_{p_1},\dots,w_{1-r_1})^T$ are obtained 
by a continuation argument from $\U$ to its boundary $\cercle$ when $z \in \cercle$. For $z \in \U$, we can 
use Lemma \ref{lem2} and parametrize the set of $\ell^2$ solutions of the recurrence relation
$$
\sum_{\ell_1=-r_1}^{p_1} a_{\ell_1}(z,\bfeta) \, w_{j_1+\ell_1} =0 \, ,\quad j_1 \ge 1 \, ,
$$
by the stable subspace $\E^s(z,\bfeta)$ of $\M(z,\bfeta)$. Among all such vectors, it is clear that the only 
one that satisfies the homogeneous numerical boundary condition
\begin{equation}
\label{homcond}
\Pi^s (z,\bfeta) \, \begin{pmatrix}
w_{p_1} \\
\vdots \\
w_{1-r_1} \end{pmatrix} = 0 \, ,
\end{equation}
is the zero vector. In other words, we have just proved that the system \eqref{resolvent} has no non-zero 
solution when the source terms vanish and $z \in \U$. Hence the so-called Godunov-Ryabenkii condition 
holds for \eqref{transparent0} (non-existence of unstable eigenvalues). Proving that the Uniform 
Kreiss-Lopatinskii condition holds amounts to showing the same `injectivity' property up to $z \in \cercle$. 
As a first step, let us observe that Corollary \ref{cor1} shows that the spectral projector $\Pi^s$ also extends 
holomorphically/analytically/periodically on a neighborhood of $\Ubar \times \R^{d-1}$. It is therefore legitimate 
to consider the resolvent equation \eqref{resolvent} for $z \in \cercle$. In that case, the only vector in the 
extended stable subspace\footnote{Recall that for $z \in \cercle$, initial data in $\E^s(z,\bfeta)$ do not 
necessarily correspond to $\ell^2$ solution of the recurrence relation but can be viewed as all the possible 
limits of such $\ell^2$ solutions.} $\E^s(z,\bfeta)$ that satisfies the homogeneous numerical boundary condition 
\eqref{homcond} is the zero vector. In other words, we have verified that the Uniform Kreiss-Lopatinskii 
condition holds. Applying the main result of \cite{gks} (more precisely, see \cite{jfcnotes} for the extension 
of the theory in \cite{gks} to the general case that we consider here), the scheme \eqref{transparent0} is 
strongly stable.
\bigskip

The above argument may look somehow trivial, but the subtle point is that in the theory of \cite{gks}, one 
assumes that the numerical boundary conditions for the resolvent equation are `well-defined' for $z \in 
\Ubar$ and the difficult part of the job is to extend the stable subspace up to the boundary $\cercle$ of 
$\U$. Here it is not even obvious that the numerical boundary conditions in \eqref{resolvent} are well-defined 
for $z \in \Ubar$. As a matter of fact, the main result in \cite{jfcsinum} shows that the spectral projector 
$\Pi^s$ extends (even continuously) up to $\cercle$ {\it if and only if} the scheme \eqref{cauchy} is 
non-glancing. When glancing (numerical) wave packets occur, the spectral projector $\Pi^s$ has a 
singularity at some point of $\cercle$. This singular behavior will be one obstacle we shall have to 
circumvent in the last paragraph of this Section.

\subsection{Semigroup estimate}

In this paragraph, we follow the analysis in \cite{jfcag,jfcX} and prove the validity of the stability estimate 
\eqref{estimation} when one considers non-zero initial data in \eqref{transparent0}. We therefore assume 
that the scheme \eqref{cauchy} is non-glancing and that for all $\bxi \in \R^d$, the roots to \eqref{dispersion} 
are simple (the latter condition being automatic for $s=0$). Under such assumptions, we can apply the 
following result from \cite{jfcX}:

\begin{theorem}[Existence of dissipative boundary conditions]
\label{thmjfcX}
Let Assumptions \ref{assumption0}, \ref{assumption1} and \ref{assumption2'} be satisfied. Assume furthermore 
that for all $\bxi \in \R^d$, the $s+1$ roots to \eqref{dispersion} are simple. Then there exists a constant $C>0$ 
such that for any given initial data $f^0,\dots,f^s \in \ell^2$, there exists a  sequence 
$(v_j^n)_{j_1 \ge 1-r_1,n\in \N}$ that satisfies
$$
\begin{cases}
{\dps \sum_{\sigma=0}^{s+1}} \, Q_\sigma \, v_j^{n+\sigma} =0 \, ,& 
n\ge 0 \, ,\quad j_1 \ge 1 \, ,\\
(v_j^0,\dots,v_j^s) = (f_j^0,\dots,f_j^s) \, , & j_1 \ge 1-r_1 \, ,
\end{cases}
$$
and
\begin{equation*}
\sup_{n \in \N} \, \Ng v^n \Nd_{1-r_1,+\infty}^2 
+\sum_{n\ge s+1} \, \sum_{j_1=1-r_1}^{p_1} \Delta t \, \| v_{(j_1,\cdot)}^n \|_{\ell^2(\Z^{d-1})}^2 
\le C \, \sum_{\sigma=0}^s \Ng f^\sigma \Nd_{1-r_1,+\infty}^2 \, .
\end{equation*}
\end{theorem}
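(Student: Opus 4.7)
The strategy is to construct $v$ as the solution of the half-space scheme \eqref{cauchy} supplemented with a carefully chosen \emph{strictly dissipative} numerical boundary condition, rather than the transparent one, and then to deduce the estimate from the Kreiss--Lopatinskii stability theory of \cite{gks}, extended to the multi-level and implicit setting as in \cite{jfcnotes}.

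First, I would use the simple-roots hypothesis on the dispersion relation \eqref{dispersion} together with Lemma \ref{lem0} to upgrade the geometric regularity of $\mathcal A$ on $(\cercle)^d$ to a smooth simultaneous diagonalization, the $s+1$ eigenvalues lying in $\Dbar$ and depending holomorphically on $\bkappa$. Translating this information back to the companion matrix $\M(z,\bfeta)$ should yield, near each boundary point $(\underline z,\underline\bfeta) \in \cercle \times \R^{d-1}$, a local block decomposition of $\M$ in which every scalar block is either strictly stable, strictly unstable, or unimodular with a nonzero first-order transverse velocity --- a variant of Theorem \ref{thmjfc1} whose proof does not require the non-glancing condition, precisely because the simplicity of the roots of \eqref{dispersion} already forces each unimodular block to be one-dimensional and to correspond to a smooth mode.

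Next, on a neighborhood of $\Ubar \times \R^{d-1}$ I would construct a smooth supplementary subspace $\widetilde{\E}(z,\bfeta)$ to the unstable subspace $\E^u(z,\bfeta)$, chosen block-by-block so that the associated oblique projector $\widetilde{\Pi}$ corresponds to a strictly dissipative boundary condition: on each unimodular block, the selected direction should make that block's contribution to the discrete energy flux through the boundary strictly negative. A partition of unity in $(z,\bfeta)$ then globalizes the construction. The scheme \eqref{cauchy} with the boundary condition $\widetilde{\Pi}\, W_1^n = 0$, where $W_1^n := (v_{(p_1,\cdot)}^n,\dots,v_{(1-r_1,\cdot)}^n)^\top$, is solvable in $\ell^2$ by a standard resolvent-semigroup argument and satisfies the Uniform Kreiss--Lopatinskii condition by design; the theory of \cite{gks} (adapted as in \cite{jfcnotes}) then yields the $\ell^2$-in-time trace bound, while the accompanying Kreiss symmetrizer, built by assembling the block-wise dissipative ones, yields the semigroup bound.

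The principal obstacle is precisely constructing $\widetilde{\E}$ smoothly across the \emph{glancing} boundary frequencies, where $\E^s$ itself may fail to extend continuously, as recalled at the end of the preceding paragraph. Here the simple-roots hypothesis is crucial: by forcing every unimodular block to be scalar with a nonvanishing group velocity, it makes the dissipative direction in each such block expressible explicitly as a smooth function of $(z,\bfeta)$ in terms of the corresponding eigenvector of $\mathcal A$, after which the partition-of-unity patching is routine. Once $\widetilde{\Pi}$ is in hand, the bound on $v$ reduces to the strong stability estimate \eqref{stabilitenumibvp} with vanishing boundary data but nonzero initial data, which is precisely the kind of statement the dissipative symmetrizer is designed to produce.
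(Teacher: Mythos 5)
There is a genuine gap, and it sits exactly where you place your ``principal obstacle''. Your argument hinges on the claim that simplicity of the $s+1$ roots in $z$ of the dispersion relation \eqref{dispersion} forces every unimodular block in the reduction of $\M(z,\bfeta)$ to be scalar with nonzero transverse velocity, i.e.\ that the simple-roots hypothesis already rules out glancing. This is false: the simple-roots hypothesis concerns multiplicity in the \emph{time} variable $z$ at real spatial frequencies $\bxi$, whereas glancing is a degeneracy in the \emph{normal} variable $\kappa_1$, namely $\partial_{\kappa_1}\lambda(\underline{\bkappa})=0$ for a (simple) unimodular eigenvalue $\lambda$ of ${\mathcal A}$, which produces a Jordan-type block $\M_g$ of size $m\ge 2$ as in Theorem \ref{thmg}. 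The two conditions are independent; the leap-frog scheme discussed in Section \ref{sect:examples} has simple $z$-roots for $|\mu|<1$ (they are $\pm\sqrt{1-\mu^2\sin^2\xi}-i\,\mu\,\sin\xi$) and nonetheless admits glancing wave packets. Since Theorem \ref{thmjfcX} does \emph{not} assume the non-glancing condition, your block-by-block construction of the dissipative supplementary subspace $\widetilde{\E}$, the ensuing Uniform Kreiss--Lopatinskii verification and the assembled Kreiss symmetrizer all break down precisely at glancing frequencies, where $\E^s$ need not even extend continuously to $\cercle$ (this is the very phenomenon exploited in the necessity part of Theorem \ref{thm2}). A secondary concern is that even away from glancing, GKS strong stability is an estimate for zero initial data; converting it into the sup-in-time bound with nonzero $f^0,\dots,f^s$ is not automatic and is in fact the whole difficulty the auxiliary sequence is meant to resolve.

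For comparison: the paper does not prove this statement at all; it quotes it from \cite{jfcX}, where the proof is by an energy (Leray--G{\aa}rding type multiplier) method. There the simple-roots assumption is used to build a multiplier from the $z$-derivative of the symbol of the Cauchy scheme, which yields simultaneously the semigroup bound and the control of the trace terms for a suitable auxiliary boundary condition, without ever needing the stable subspace to extend to the unit circle; glancing frequencies are thus circumvented rather than excluded. If you want to salvage your route, you would have to construct symmetrizers (and dissipative subspaces) across genuine glancing blocks of size $m\ge 2$, which is a substantially harder task and not something the simple-roots hypothesis lets you avoid.
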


\noindent With the help of Theorem \ref{thmjfcX}, we go back to the numerical scheme \eqref{transparent0}. 
We tacitly assume again that the source terms in \eqref{transparent0} satisfy the necessary compatibility 
conditions for a solution to exist. We then decompose the solution to \eqref{transparent0} as $u_j^n =v_j^n 
+w_j^n$, where the sequence $(v_j^n)$ is given by Theorem \ref{thmjfcX}, and the remaining part $(w_j^n)$ 
satisfies
\begin{equation}
\label{remainder}
\begin{cases}
{\dps \sum_{\sigma=0}^{s+1}} \, Q_\sigma \, w_j^{n+\sigma} =\Delta t \, F_j^{n+s+1} \, ,& 
n\ge 0 \, ,\quad j_1 \ge 1 \, ,\\
{\dps \sum_{m=0}^{n+s+1}} \, \bpi_{n+s+1-m} \, \begin{pmatrix}
w_{(p_1,\cdot)}^m \\
\vdots \\
w_{(1-r_1,\cdot)}^m \end{pmatrix} = g^{n+s+1} -{\dps \sum_{m=0}^{n+s+1}} \, \bpi_{n+s+1-m} \, \begin{pmatrix}
v_{(p_1,\cdot)}^m \\
\vdots \\
v_{(1-r_1,\cdot)}^m \end{pmatrix}\, , & n\ge 0 \, ,\\
(w_j^0,\dots,w_j^s) = (0,\dots,0) \, , & j_1 \ge 1-r_1 \, .
\end{cases}
\end{equation}
The main point to keep in mind is that we have reduced to the case of vanishing initial data for $(w_j^n)$. 
Since the scheme \eqref{cauchy} is non-glancing, we have already seen that \eqref{transparent0} is strongly 
stable and therefore satisfies the estimate \eqref{stabilitenumibvp} (with the interior and boundary source terms 
as given in \eqref{remainder}). Since we need now to estimate these source terms, we define
$$
\forall \, n \ge 0 \, ,\quad 
\widetilde{g}^{n+s+1} := g^{n+s+1} -{\dps \sum_{m=0}^{n+s+1}} \, \bpi_{n+s+1-m} \, \begin{pmatrix}
v_{(p_1,\cdot)}^m \\
\vdots \\
v_{(1-r_1,\cdot)}^m \end{pmatrix} \, .
$$
The bound given in Corollary \ref{cor1} for the matrices $\Pi_n(\bfeta)$ implies that the Fourier multipliers 
$(\bpi_n)$ satisfy
$$
\sum_{n \ge 0} \| \bpi_n \|_{{\mathcal B}(\ell^2(\Z^{d-1}))} < +\infty \, .
$$
Actually, the decay is even exponential with respect to $n$, but we shall only make use of the fact that 
the norms of these operators belong to $\ell^1$. We use the above definition of the source term 
$\widetilde{g}^{n+s+1}$ and derive the estimates (here $\gamma>0$ is a parameter and the constants 
$C$ below are independent of $\gamma$):
\begin{align*}
\sum_{n\ge s+1} \, \Delta t \, {\rm e}^{-2\, \gamma \, n\, \Delta t} \, \| \widetilde{g}^n \|_{\ell^2(\Z^{d-1})}^2 &\le 
C \, \left\{ 
\sum_{n\ge s+1} \, \Delta t \, {\rm e}^{-2\, \gamma \, n\, \Delta t} \, \| g^n \|_{\ell^2(\Z^{d-1})}^2 \right. \\
& \quad \quad \left. +\sum_{n\ge s+1} \, \Delta t \, {\rm e}^{-2\, \gamma \, n\, \Delta t} \, \left\| \sum_{m=0}^n 
\bpi_{n-m} \, \begin{pmatrix}
v_{(p_1,\cdot)}^m \\
\vdots \\
v_{(1-r_1,\cdot)}^m \end{pmatrix} \right\|_{\ell^2(\Z^{d-1})}^2 \right\} \\
&\le C \, \left\{ 
\sum_{n\ge s+1} \, \Delta t \, {\rm e}^{-2\, \gamma \, n\, \Delta t} \, \| g^n \|_{\ell^2(\Z^{d-1})}^2 \right. \\
& \quad \quad \left. 
+\sum_{n\ge 0} \, \Delta t \, {\rm e}^{-2\, \gamma \, n\, \Delta t} \, \sum_{j_1=1-r_1}^{p_1} 
\| v_{(j_1,\cdot)}^n \|_{\ell^2(\Z^{d-1})}^2 \right\} \\
&\le C \, \left\{ \sum_{\sigma=0}^s \Ng f^\sigma \Nd_{1-r_1,+\infty}^2 
+\sum_{n\ge s+1} \, \Delta t \, {\rm e}^{-2\, \gamma \, n\, \Delta t} \, \| g^n \|_{\ell^2(\Z^{d-1})}^2 \right\} \, ,
\end{align*}
where we have first used the standard $\ell^1 \star \ell^2$ convolution estimate and then the bound 
provided by Theorem \ref{thmjfcX} for the trace of $(v_j^n)$.

Since we have an estimate of the source terms in \eqref{remainder}, we can use the strong stability of 
\eqref{transparent0} and obtain the following estimate for the solution $(w_j^n)$ to \eqref{remainder}:
\begin{multline}
\label{estimation'}
\dfrac{\gamma}{\gamma \, \Delta t+1} \, \sum_{n\ge 0} \Delta t \, {\rm e}^{-2\, \gamma \, n\, \Delta t} \, 
\Ng w^n \Nd_{1-r_1,+\infty}^2 
+\sum_{n\ge s+1} \, \sum_{j_1=1-r_1}^{p_1} \Delta t \, {\rm e}^{-2\, \gamma \, n\, \Delta t} \, 
\| w_{(j_1,\cdot)}^n \|_{\ell^2(\Z^{d-1})}^2 \\
\le C \left\{ \sum_{\sigma=0}^s \Ng f^\sigma \Nd_{1-r_1,+\infty}^2 
+\dfrac{\gamma \, \Delta t+1}{\gamma} \, \sum_{n \ge s+1} \Delta t \, 
{\rm e}^{-2\, \gamma \, n\, \Delta t} \, \Ng F^n \Nd_{1,+\infty}^2 
+\sum_{n\ge s+1} \, \Delta t \, {\rm e}^{-2\, \gamma \, n\, \Delta t} \, \| g^n \|_{\ell^2(\Z^{d-1})}^2 \right\} \, .
\end{multline}
The goal now is to combine \eqref{estimation'} with the estimate of Theorem \ref{thmjfcX} in order to 
derive the estimate \eqref{estimation} of Theorem \ref{thm2} we are aiming at. Both \eqref{estimation'} 
and the estimate of Theorem \ref{thmjfcX} provide with an estimate for the traces of $(v_j^n)$ and 
$(w_j^n)$ that is sufficient for deriving the estimate of the trace of $(u_j^n)$ in \eqref{estimation}. 
Unfortunately, this is not over yet since on the left hand side of \eqref{estimation'}, we only control 
the norm
$$
\dfrac{\gamma}{\gamma \, \Delta t+1} \, \sum_{n\ge 0} \Delta t \, {\rm e}^{-2\, \gamma \, n\, \Delta t} \, 
\Ng w^n \Nd_{1-r_1,+\infty}^2 \, ,
$$
and not the (stronger) semigroup norm 
$$
\sup_{n \in \N} \, {\rm e}^{-2\, \gamma \, n\, \Delta t} \, \Ng w^n \Nd_{1-r_1,+\infty}^2 \, .
$$
However, at this stage, the {\it exact same} argument as in \cite[Paragraph 3.1]{jfcX} using the multiplier 
technique developed in that article provides with the semigroup estimate of $(w_j^n)$. This part of the 
argument in \cite{jfcX} is not restricted to the `local' numerical boundary conditions considered in that 
paper but applies as long as one already controls the trace of the solution to \eqref{remainder} (which is 
provided here, as in \cite{jfcX}, by the strong stability of \eqref{transparent0}). Hence we can improve the 
estimate \eqref{estimation'} into
\begin{multline}
\label{estimation''}
\sup_{n \in \N} \, {\rm e}^{-2\, \gamma \, n\, \Delta t} \, \Ng w^n \Nd_{1-r_1,+\infty}^2 
+\sum_{n\ge s+1} \, \sum_{j_1=1-r_1}^{p_1} \Delta t \, {\rm e}^{-2\, \gamma \, n\, \Delta t} \, 
\| w_{(j_1,\cdot)}^n \|_{\ell^2(\Z^{d-1})}^2 \\
\le C \left\{ \sum_{\sigma=0}^s \Ng f^\sigma \Nd_{1-r_1,+\infty}^2 
+\dfrac{\gamma \, \Delta t+1}{\gamma} \, \sum_{n \ge s+1} \Delta t \, 
{\rm e}^{-2\, \gamma \, n\, \Delta t} \, \Ng F^n \Nd_{1,+\infty}^2 
+\sum_{n\ge s+1} \, \Delta t \, {\rm e}^{-2\, \gamma \, n\, \Delta t} \, \| g^n \|_{\ell^2(\Z^{d-1})}^2 \right\} \, ,
\end{multline}
and combining \eqref{estimation''} with the estimate for $(v_j^n)$ provided by Theorem \ref{thmjfcX}, 
we complete the proof of \eqref{estimation}.

\subsection{Necessity of the non-glancing condition}

Our goal in this Paragraph is to show the last part of Theorem \ref{thm2}, meaning that the non-glancing 
condition is necessary for strong stability of \eqref{transparent0}. We therefore assume from now on that 
the scheme \eqref{cauchy} is glancing and that strong stability holds for \eqref{transparent0}. Our goal 
will be to obtain a contradiction.

By the analysis of \cite{gks}, strong stability of \eqref{transparent0} is equivalent to the fulfillment of a 
uniform stability estimate for the solution to the resolvent equation \eqref{resolvent}. More precisely, 
since we have assumed that \eqref{transparent0} is strongly stable, then there exists a constant $C>0$ 
such that for all $z \in \U$ and all $\bfeta \in \R^{d-1}$, for all $(F_{j_1})_{j_1 \ge 1} \in \ell^2$ and for all 
$G \in \E^s (z,\bfeta)$, the resolvent equation \eqref{resolvent} has a unique solution $(w_{j_1})_{j_1 \ge 1-r_1} 
\in \ell^2$ and that solution satisfies
$$
\dfrac{|z|-1}{|z|} \, \sum_{j_1 \ge 1-r_1} |w_{j_1}|^2 +\sum_{j_1=1-r_1}^{p_1} |w_{j_1}|^2 
\le C \, \left\{ \dfrac{|z|}{|z|-1} \, \sum_{j_1 \ge 1} |F_{j_1}|^2 +|G|^2 \right\} \, .
$$
Let us be a little more specific and consider the resolvent equation \eqref{resolvent} in the particular 
case $G=0$. Then using the companion matrix $\M$ in \eqref{defM} to rewrite the recurrence relation 
in \eqref{resolvent}, we can find an expression for the solution to \eqref{resolvent}. In particular, there 
holds
\begin{equation}
\label{solresolvent'}
{\mathcal W}_1 := \begin{pmatrix}
w_{p_1} \\
\vdots \\
w_{1-r_1} \end{pmatrix} =-\sum_{k \ge 1} \M(z,\bfeta)^{-k} \, \Pi^u(z,\bfeta) \, \begin{pmatrix}
F_k/a_{p_1}(z,\bfeta) \\
0 \\
\vdots \\
0 \end{pmatrix} \, .
\end{equation}
Thanks to our strong stability assumption, we know that uniformly with respect to $(z,\bfeta) \in \U \times 
\R^{d-1}$, the vector ${\mathcal W}_1$ defined in \eqref{solresolvent'} satisfies
\begin{equation}
\label{estimW1'}
|{\mathcal W}_1|^2 \le C \, \dfrac{|z|}{|z|-1} \, \sum_{j_1 \ge 1} |F_{j_1}|^2 \, .
\end{equation}
The inconvenient feature of \eqref{solresolvent'} is that the source term, meaning the vector to which 
we apply the matrix $\M(z,\bfeta)^{-k} \, \Pi^u(z,\bfeta)$ on the right hand side, should be proportional 
to the first vector of the canonical basis. However, one can argue as in \cite[Proposition 4]{jfcnotes} and 
show that the same estimate as \eqref{estimW1'} holds for arbitrary source terms in $\C^{p_1+r_1}$. 
More precisely, if strong stability holds and under the assumptions of Theorem \ref{thm2}, there exists 
a constant $C>0$ such that for all $z \in \U$ with $|z| \le 2$, for all $\bfeta \in \R^{d-1}$ and for all 
$({\mathcal F}_{j_1})_{j_1 \ge 1} \in \ell^2$, the vector
\begin{equation}
\label{solresolvent}
{\mathcal W}_1 := -\sum_{k \ge 1} \M(z,\bfeta)^{-k} \, \Pi^u(z,\bfeta) \, {\mathcal F}_k \, ,
\end{equation}
satisfies the estimate
\begin{equation}
\label{estimW1}
|{\mathcal W}_1|^2 \le C \, \dfrac{|z|}{|z|-1} \, \sum_{j_1 \ge 1} |{\mathcal F}_{j_1}|^2 \, .
\end{equation}

Our goal now is to show that, if \eqref{cauchy} is glancing, then the estimate \eqref{estimW1} breaks 
down for a convenient choice of the frequency $z$ (that should be sufficiently close to $\U$) and of 
the source term $({\mathcal F}_{j_1})_{j_1 \ge 1} \in \ell^2$. Let us therefore recall the following result 
from \cite{jfcsinum}, which will be the starting point for our construction of the source term 
$({\mathcal F}_{j_1})_{j_1 \ge 1} \in \ell^2$ in \eqref{solresolvent}.

\begin{theorem}[Block reduction of $\M$]
\label{thmg}
Let Assumptions \ref{assumption0}, \ref{assumption1}, \ref{assumption2'} and \ref{assumption3} be satisfied, 
and assume that the scheme \eqref{cauchy} is glancing. Then there exists $\underline{z} \in \Ubar$ and 
$\underline{\bfeta} \in \R^{d-1}$, there exists an open neighborhood ${\mathcal O}$ of $(\underline{z}, 
\underline{\bfeta})$ in $\C \times \R^{d-1}$ and there exists an invertible matrix $T(z,\bfeta)$ that is 
holomorphic/analytic with respect to $(z,\bfeta) \in {\mathcal O}$ such that:
\begin{equation*}
\forall \, (z,\bfeta) \in {\mathcal O} \, ,\quad T(z,\bfeta)^{-1}\, \M(z,\bfeta) \, T(z,\bfeta) =\begin{pmatrix}
\M_g(z,\bfeta) & 0 \\
0 & \M_\sharp(z,\bfeta) \end{pmatrix} \, ,
\end{equation*}
where the diagonal block $\M_g$ has size $m \times m$, $m \ge 2$, and it satisfies
\begin{equation*}
\M_g(\underline{z},\underline{\bfeta})=\underline{\kappa} \, \begin{pmatrix}
1 & 1 & 0 & 0 \\
0 & \ddots & \ddots & 0 \\
\vdots & \ddots & \ddots & 1 \\
0 & \dots & 0 & 1 \end{pmatrix} \, ,\quad \underline{\kappa} \in \cercle \, .
\end{equation*}
Moreover the lower left coefficient $\underline{\nu}$ of $\partial_z \M_g(\underline{z},\underline{\bfeta})$ 
is such that for all $\theta \in \C$ with $\text{\rm Re } \theta>0$, and for all complex number $\zeta$ such that 
$\zeta^m =\overline{\underline{\kappa}} \, \underline{\nu} \, \underline{z} \, \theta$, then $\text{\rm Re } \zeta 
\neq 0$.
\end{theorem}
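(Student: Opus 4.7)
My strategy is to translate the glancing hypothesis, which is phrased at the level of the time amplification matrix $\mathcal{A}$, into a coalescence of $\kappa_1$-eigenvalues of the space companion matrix $\M$, and then to invoke standard analytic spectral perturbation theory to produce the block reduction. The first step is to fix, thanks to Definition \ref{def2}, a point $\underline{\bkappa} = (\underline{\kappa}, \underline{\kappa_2}, \dots, \underline{\kappa_d}) \in (\cercle)^d$ and $\underline{z} \in \cercle \cap \text{\rm sp}(\mathcal{A}(\underline{\bkappa}))$ for which the holomorphic branch $\lambda$ furnished by Lemma \ref{lem0} satisfies $\partial_{\kappa_1} \lambda(\underline{\bkappa}) = 0$, and to set $\underline{\bfeta} := (-i \log \underline{\kappa_2}, \dots, -i \log \underline{\kappa_d})$. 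Since the dispersion relation \eqref{dispersion} is (up to the factor $\kappa_1^{r_1}$) the characteristic polynomial of $\M(z, \bfeta)$ in $\kappa_1$, reading $z = \lambda(\bkappa)$ as an implicit relation between $\kappa_1$ and $z$ at fixed $\bfeta = \underline{\bfeta}$, the vanishing of $\partial_{\kappa_1} \lambda$ at $\underline{\bkappa}$ forces $\underline{\kappa}$ to be a root of $\det(\kappa \, I - \M(\underline{z}, \underline{\bfeta}))$ of some multiplicity $m \ge 2$.

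In the second step I apply the holomorphic functional calculus (\cite{baumgartel,kato}) on a sufficiently small neighborhood $\mathcal{O}$ of $(\underline{z}, \underline{\bfeta})$: since $\M$ depends analytically on $(z, \bfeta)$ and since the remaining eigenvalues of $\M(\underline{z}, \underline{\bfeta})$ are separated from $\underline{\kappa}$, the Riesz projector onto the eigenvalue cluster around $\underline{\kappa}$ depends analytically on $(z, \bfeta) \in \mathcal{O}$, and the associated similarity $T(z, \bfeta)$ block-diagonalizes $\M$ into an $m \times m$ block $\M_g$ carrying the cluster, and a complementary block $\M_\sharp$ whose spectrum stays away from $\underline{\kappa}$. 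The third step exploits the companion structure of $\M$: every eigenspace is one-dimensional, so $\underline{\kappa}$ has geometric multiplicity one within $\M_g(\underline{z}, \underline{\bfeta})$ and its Jordan normal form is necessarily a single block of size $m$. Composing $T$ with a further constant change of basis puts $\M_g(\underline{z}, \underline{\bfeta})$ in the form $\underline{\kappa}(I + N)$ displayed in the statement, where $N$ is the standard upper shift nilpotent.

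The last step extracts the information on $\partial_z \M_g(\underline{z}, \underline{\bfeta})$. Writing the Puiseux expansion of the $m$ coalescing eigenvalues as $\kappa_1 - \underline{\kappa} \sim \zeta \cdot (z - \underline{z})^{1/m}$ and substituting into $\det(\kappa \, I - \M_g(z, \underline{\bfeta})) = 0$, a cofactor expansion around the nilpotent matrix $\underline{\kappa} \, N$ identifies $\zeta^m$ with a constant multiple of $\underline{\nu} \, (z - \underline{z})$, where $\underline{\nu}$ is the lower-left entry of $\partial_z \M_g(\underline{z}, \underline{\bfeta})$; after absorbing the $\underline{z}$-factors arising from the normalization of $\M_g$ one recovers $\zeta^m = \overline{\underline{\kappa}} \, \underline{\nu} \, \underline{z} \, \theta$. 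The non-vanishing of $\underline{\nu}$ is equivalent to the $m$ Puiseux branches being genuinely distinct, which is forced by Lemma \ref{lem0} since $\underline{z}$ is a simple root of \eqref{dispersion} at $\bkappa = \underline{\bkappa}$. Finally, the real-part condition $\text{\rm Re } \zeta \ne 0$ for every $\theta$ with $\text{\rm Re } \theta > 0$ is obtained by contradiction: if some branch had purely imaginary $\zeta$, then for $z$ slightly inside $\U$ the corresponding eigenvalue $\kappa_1(z)$ would remain on $\cercle$, contradicting Lemma \ref{lem2}.

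\textbf{Main obstacle.} The delicate point is verifying the sharp real-part condition uniformly in $\theta$ in the right half-plane: while the algebraic identification of $\underline{\nu}$ with the leading Puiseux coefficient is essentially bookkeeping, excluding the pathological configuration in which one of the $m$ branches could stay tangent to $\cercle$ as $z$ enters $\U$ requires carefully coupling the power boundedness \eqref{powerbounded} (which constrains time-eigenvalues) with the space-side splitting of $\M$-eigenvalues provided by Lemma \ref{lem2}.
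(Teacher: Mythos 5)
Your Steps 1--3 follow the right route, and it is essentially the route of the reference the paper relies on (the paper does not prove Theorem \ref{thmg}; it quotes the discrete block structure from \cite{jfcsinum}): under Assumption \ref{assumption2'} the matrix $\M(z,\bfeta)$ is defined up to $z\in\cercle$; the glancing hypothesis, combined with the local factorization of the dispersion relation provided by Lemma \ref{lem0} (the factor $\vartheta$ being nonzero), makes $\underline{\kappa}$ an eigenvalue of $\M(\underline{z},\underline{\bfeta})$ of algebraic multiplicity $m\ge 2$ equal to the order of the first nonvanishing $\kappa_1$-derivative of the branch $\lambda$; the Riesz projector gives the analytic block reduction, nonderogatoriness of companion matrices gives a single Jordan block, and a constant change of basis produces $\underline{\kappa}(I+N)$. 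Your determinant expansion also correctly yields $\underline{\nu}=\underline{\kappa}^{1-m}/c\neq 0$, where $c$ is the leading Taylor coefficient of $\lambda(\kappa_1)-\underline{z}$ at $\underline{\kappa}$, so the bookkeeping leading to $\zeta^m=\overline{\underline{\kappa}}\,\underline{\nu}\,\underline{z}\,\theta$ is recoverable.

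The genuine gap is in the final step, exactly the one you flag. The implication ``$\mathrm{Re}\,\zeta=0$ for some branch $\Rightarrow$ the eigenvalue $\kappa_1(z)$ stays on $\cercle$ for $z$ slightly in $\U$'' is false: $\mathrm{Re}\,\zeta=0$ only says the modulus is unchanged at order $(z-\underline{z})^{1/m}$, and the higher-order Puiseux terms can (and generically do) move the branch off the unit circle, so no contradiction with Lemma \ref{lem2} is obtained in the direction you argue. Moreover the statement to be proved is not of that form: writing $A:=\mathrm{Re}\big(\overline{\underline{z}}\,c\,(i\,\underline{\kappa})^m\big)$, an elementary computation with $m$-th roots shows that the asserted property of $\underline{\nu}$ is \emph{equivalent} to $A\le 0$ when $m$ is even and to $A=0$ when $m$ is odd, so borderline (purely tangential) behaviour is allowed, respectively forced, and cannot be excluded by any strict off-circle argument. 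The correct argument runs in the opposite direction: restrict $\kappa_1=\underline{\kappa}\,{\rm e}^{i\xi}$ to the unit circle and use Lemma \ref{lem0} (all roots $z$ of \eqref{dispersion} with $\bkappa$ on the torus lie in $\Dbar$; equivalently, the contrapositive of Lemma \ref{lem2}), which gives $|\lambda(\underline{\kappa}{\rm e}^{i\xi})|\le 1$ for small real $\xi$; expanding $|\lambda(\underline{\kappa}{\rm e}^{i\xi})|^2=1+2\,A\,\xi^m+O(\xi^{m+1})$ and letting $\xi\to 0^{\pm}$ yields precisely $A\le 0$ ($m$ even) and $A=0$ ($m$ odd), hence the stated condition on $\underline{\nu}$. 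So Lemma \ref{lem2} (or rather the Cauchy stability encoded in Lemma \ref{lem0}) does suffice, but only when exploited by moving $\kappa_1$ along $\cercle$ and constraining where $z=\lambda(\kappa_1)$ may go, not by moving $z$ into $\U$ and claiming the space eigenvalue remains unimodular.
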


\noindent In the terminology of \cite{jfcsinum}, the block $\M_g$ is of the fourth type (the subscript $g$ here 
refers to `glancing'). In all what follows, we keep the tangential frequency $\bfeta$ in \eqref{solresolvent} 
fixed and equal to that $\underline{\bfeta} \in \R^{d-1}$ given in Theorem \ref{thmg}. We shall also choose 
$z=z_\eps :=(1+\eps)\, \underline{z}$, with $\eps>0$ small enough so that $(z_\eps,\underline{\bfeta})$ 
belongs to the neighborhood ${\mathcal O}$ of $(\underline{z},\underline{\bfeta})$ given by Theorem 
\ref{thmg}. Since $\bfeta$ is fixed, we forget to recall the $\bfeta$-dependence of all quantities from now on.

The $m$ first column vectors of the matrix $T(z)$ in Theorem \ref{thmg} are denoted $T_1(z),\dots,T_m(z)$. 
They satisfy
$$
\M(z) \, \begin{pmatrix}
T_1(z) & \cdots & T_m(z) \end{pmatrix} =\begin{pmatrix}
T_1(z) & \cdots & T_m(z) \end{pmatrix} \, \M_g(z) \, .
$$
At $z=\underline{z}$, $\M_g(\underline{z})$ has the (only) eigenvalue $\underline{\kappa} \in \cercle$ with 
algebraic multiplicity $m$. The geometric multiplicity is $1$. For $z \in \U$ close to $\underline{z}$, we know 
from Lemma \ref{lem2} that $\M_g(z)$ has no eigenvalue on $\cercle$ for eigenvalues of $\M_g(z)$ are also 
eigenvalues of $\M(z)$. Therefore the number $\mu$ of stable eigenvalues of $\M_g(z)$ when $z \in \U$ is 
close to $\underline{z}$ is constant. This number is denoted $\mu$ from now on. Its value is given in 
\cite[Proposition 4.1]{jfcpise}:
$$
\mu =\begin{cases}
m/2 \, ,& \text{if $m$ is even,} \\
(m \pm 1)/2  \, ,& \text{if $m$ is odd.}
\end{cases}
$$
The choice between $\pm 1$ when $m$ is odd depends on the lower left coefficient $\underline{\nu}$ 
of $\partial_z \M_g (\underline{z})$. The eigenvalues and eigenvectors of $\M_g(z)$ have a Puiseux 
expansion close to $\underline{z}$, see \cite{baumgartel}. Such expansions are computed as in 
\cite[Proposition 4.1]{jfcpise} (see similar arguments for the continuous problem in \cite{kreiss2,sarason1}). 
The expansions read
\begin{align*}
\kappa (z) &= \underline{\kappa} \, \big( 1+\zeta_1 \, w^{1/m} +\zeta_2 \, w^{2/m} +\cdots +\zeta_k \, w^{k/m} 
+\cdots \big) \, ,\\
r(z) &={\bf r}_0 +{\bf r}_1 \, w^{1/m} +{\bf r}_2 \, w^{2/m} +\cdots +{\bf r}_k \, w^{k/m} +\cdots \, ,
\end{align*}
with $w:=(z-\underline{z})/\underline{z}$, and the vectors ${\bf r}_0,\dots,{\bf r}_{m-1}$ form a basis of 
$\C^m$. Moreover the first coefficient $\zeta_1$ in the expansion of $\kappa(z)$ is nonzero and can be 
chosen to be real if $m$ is odd. Independently of $m$, the number $\zeta_1$ is such that for any $m$-th 
root of unity $\omega$, the real part of $\zeta_1 \, \omega$ is nonzero.

Let us label the $m$-th roots of unity as $\omega_1,\dots,\omega_m$ and specify $z=z_\eps := (1+\eps) \, 
\underline{z}$, $\eps>0$ small enough. Then the eigenvalues $\kappa_\ell(\eps)$, $\ell=1,\dots,m$, of 
$\M_g(z_\eps)$ have the expansions
$$
\kappa_\ell(\eps) = \underline{\kappa} \, \big( 1+\zeta_1 \, \omega_\ell \, \eps^{1/m} \big) +O(\eps^{2/m}) \, ,
$$
and the associated eigenvectors read
$$
r_\ell(\eps) =\sum_{k=0}^{m-1} {\bf r}_k \, \omega_\ell^k \, \eps^{k/m} +O(\eps) \, .
$$
The $m$-th roots of unity are labeled in such a way that $\kappa_1(\eps),\dots,\kappa_\mu(\eps)$ are the 
stable eigenvalues of $\M_g(z_\eps)$, and $\kappa_{\mu+1}(\eps),\dots,\kappa_m(\eps)$ are the unstable 
eigenvalues. To each eigenvector $r_\ell(\eps)$ for $\M_g(z_\eps)$, there corresponds an eigenvector
$$
{\mathcal T}_\ell(\eps) := \begin{pmatrix}
T_1(z_\eps) & \cdots & T_m(z_\eps) \end{pmatrix} \, r_\ell(\eps) \, ,
$$
for the matrix $\M(z_\eps)$, with the same eigenvalue $\kappa_\ell(\eps)$. In particular, ${\mathcal T}_1, 
\dots,{\mathcal T}_\mu$ are stable eigenvectors and ${\mathcal T}_{\mu+1},\dots,{\mathcal T}_m$ are 
unstable eigenvectors.

The goal is now to choose a source term $({\mathcal F}_{j_1})$ of size $\sim 1$ in $\ell^2$ but such that 
the projection on the unstable subspace of $\M(z_\eps)$ is large and proportional to some given unstable 
eigenvector. Namely, we first define
\begin{equation}
\label{defFeps}
{\mathcal F}(\eps) := \sum_{\ell=1}^{\mu+1} \alpha_\ell(\eps) \, {\mathcal T}_\ell(\eps) 
=\begin{pmatrix}
T_1(z_\eps) & \cdots & T_m(z_\eps) \end{pmatrix} \, \sum_{\ell=1}^{\mu+1} \alpha_\ell(\eps) \, r_\ell(\eps) 
\, ,
\end{equation}
where the coefficients $\alpha_1(\eps),\dots,\alpha_{\mu+1}(\eps)$ are chosen such that
\begin{equation}
\label{defalphaell}
\begin{pmatrix}
1 & \cdots & 1 \\
\vdots & & \vdots \\
\omega_1^\mu & \cdots & \omega_{\mu+1}^\mu \end{pmatrix} \, \begin{pmatrix}
\alpha_1(\eps) \\
\vdots \\
\alpha_{\mu+1}(\eps) \end{pmatrix} =\begin{pmatrix}
0 \\
\vdots \\
0 \\
\eps^{-\mu/m} \end{pmatrix} \, .
\end{equation}
With this definition of the coefficients $\alpha_\ell(\eps)$, and using the above Puiseux expansions 
of the eigenvectors $r_\ell(\eps)$, we have
$$
{\mathcal F}(\eps) =\begin{pmatrix}
T_1(\underline{z}) & \cdots & T_m(\underline{z}) \end{pmatrix} \, {\bf r}_\mu +o(1) \, ,
$$
so, for some positive constant $c>0$, we have
$$
|{\mathcal F}(\eps)| =c+o(1) \, ,
$$
as $\eps$ tends to zero. With $z=z_\eps$, we choose in \eqref{solresolvent} the source term
$$
\forall \, j_1 \ge 1 \, ,\quad 
{\mathcal F}_{j_1} := \dfrac{\big( |\kappa_{\mu+1}(\eps)|^2 -1 \big)^{1/2}}{\overline{\kappa_{\mu+1}(\eps)^{j_1}}} 
\, {\mathcal F}(\eps) \, ,
$$
with the vector ${\mathcal F}(\eps)$ given in \eqref{defFeps}. This choice gives
$$
\sum_{j_1 \ge 1} |{\mathcal F}_{j_1}|^2 =|{\mathcal F}(\eps)|^2 =c+o(1) \, ,
$$
and the corresponding vector ${\mathcal W}_1$ in \eqref{solresolvent} reads
\begin{align*}
{\mathcal W}_1 &= -\alpha_{\mu+1}(\eps) \, \sum_{k \ge 1} 
\dfrac{\big( |\kappa_{\mu+1}(\eps)|^2 -1 \big)^{1/2}}{\overline{\kappa_{\mu+1}(\eps)^k}} \, \M(z_\eps)^{-k} \, 
{\mathcal T}_{\mu+1}(\eps) \\
&= -\alpha_{\mu+1}(\eps) \, \sum_{k \ge 1} 
\dfrac{\big( |\kappa_{\mu+1}(\eps)|^2 -1 \big)^{1/2}}{|\kappa_{\mu+1}(\eps)|^{2\, k}} \, {\mathcal T}_{\mu+1}(\eps) \\
&= -\dfrac{\alpha_{\mu+1}(\eps)}{\big( |\kappa_{\mu+1}(\eps)|^2 -1 \big)^{1/2}} \, {\mathcal T}_{\mu+1}(\eps) \, .
\end{align*}
The bound \eqref{estimW1} then gives
$$
\dfrac{|\alpha_{\mu+1}(\eps)|^2}{|\kappa_{\mu+1}(\eps)|^2 -1} \, |{\mathcal T}_{\mu+1}(\eps)|^2 \le 
\dfrac{C}{\eps} \, ,
$$
with $C>0$ uniform with respect to $\eps$. The conclusion follows from the asymptotics of the quantities 
on the left hand side of this last inequality. Namely, from our construction of the eigenvectors ${\mathcal T}_\ell$ 
and the Puiseux expansion of the eigenvalues (recall that the real part of $\zeta_1 \, \omega_{\mu+1}$ is 
non-zero and therefore positive since $\kappa_{\mu+1}(\eps)$ is an unstable eigenvalue), we have
$$
|\kappa_{\mu+1}(\eps)|^2 -1 \sim c \, \eps^{1/m} \, ,\quad |{\mathcal T}_{\mu+1}(\eps)|^2 \sim c \, ,
$$
with $c$ a positive constant that does not depend on $\eps$. Eventually, the inverse of the Vandermonde 
matrix in \eqref{defalphaell} has a nonzero lower right coefficient (use \eqref{formuleinv}):
$$
\alpha_{\mu+1}(\eps) =\dfrac{\eps^{-\mu/m}}{\prod_{\ell=1}^\mu (\omega_{\mu+1}-\omega_\ell)} \, .
$$
In other words, we have shown that for a suitable constant $C>0$, and for all $\eps>0$ arbitrarily small, 
there holds
$$
1 \le C \, \eps^{(2\, \mu+1)/m-1} +o(\eps^{(2\, \mu+1)/m-1})\, .
$$
Because of the already mentioned result of \cite[Proposition 4.1]{jfcpise}, this forces $m$ to be odd and 
$\mu$ to equal $(m-1)/2$ for otherwise we are led to a contradiction.
\bigskip

It therefore remains to deal with the last possible case: $m$ ($\ge 3$) is an odd number, and $\mu =(m-1)/2$. 
In particular, there are at least two unstable eigenvalues for $\M_g(z_\eps)$. The proof of Theorem \ref{thm2} 
in this case is a slight refinement of the above argument, which consists in first defining (compare with 
\eqref{defFeps}):
$$
{\mathcal F}(\eps) := \sum_{\ell=1}^{\mu+2} \alpha_\ell(\eps) \, {\mathcal T}_\ell(\eps) \, ,
$$
with coefficients $\alpha_1(\eps),\dots,\alpha_{\mu+2}(\eps)$ defined by the relation
$$
\begin{pmatrix}
1 & \cdots & 1 \\
\vdots & & \vdots \\
\omega_1^\mu & \cdots & \omega_{\mu+2}^\mu \end{pmatrix} \, \begin{pmatrix}
\alpha_1(\eps) \\
\vdots \\
\alpha_{\mu+2}(\eps) \end{pmatrix} =\begin{pmatrix}
0 \\
\vdots \\
0 \\
\eps^{-(\mu+1)/m} \end{pmatrix} \, .
$$
In that case, we have
$$
\Pi^u (z_\eps) \, {\mathcal F}(\eps) =\alpha_{\mu+1}(\eps) \, {\mathcal T}_{\mu+1}(\eps) 
+\alpha_{\mu+2}(\eps) \, {\mathcal T}_{\mu+2}(\eps) \, ,
$$
which is a little less nice than before because the unstable part of ${\mathcal F}(\eps)$ has components 
on two (mostly parallel) eigenvectors of $\M(z_\eps)$ so there might be some cancelation between those 
two components. We keep nevertheless the same definition as above for the source term in 
\eqref{solresolvent}, namely:
$$
\forall \, j_1 \ge 1 \, ,\quad {\mathcal F}_{j_1} :=\Big( |\kappa_{\mu+1}(\eps)|^2 -1 \Big)^{1/2} \, 
\dfrac{1}{\overline{\kappa_{\mu+1}(\eps)^{j_1}}} \, {\mathcal F}(\eps) \, ,
$$
but this time with our new definition of ${\mathcal F}(\eps)$. We still have
$$
\sum_{j_1 \ge 1} |{\mathcal F}_{j_1}|^2 =c+o(1) \, ,
$$
but now the corresponding vector ${\mathcal W}_1$ in \eqref{solresolvent} is given by
\begin{equation*}
-\big( |\kappa_{\mu+1}(\eps)|^2 -1 \big)^{-1/2} \, {\mathcal W}_1 
=\dfrac{\alpha_{\mu+1}(\eps)}{|\kappa_{\mu+1}(\eps)|^2 -1} \, {\mathcal T}_{\mu+1}(\eps) 
+\dfrac{\alpha_{\mu+2}(\eps)}{\overline{\kappa_{\mu+1}(\eps)} \, \kappa_{\mu+2}(\eps)-1} 
\, {\mathcal T}_{\mu+2}(\eps) \, .
\end{equation*}
We use again the explicit formula \eqref{formuleinv} for the inverse of a Vandermonde matrix to derive
\begin{align*}
\alpha_{\mu+1}(\eps) &= \dfrac{\eps^{-(\mu+1)/m}}{(\omega_{\mu+1}-\omega_{\mu+2}) \, 
\prod_{k=1}^\mu (\omega_{\mu+1}-\omega_k)} \, ,\\
\alpha_{\mu+2}(\eps) &= \dfrac{\eps^{-(\mu+1)/m}}{(\omega_{\mu+2}-\omega_{\mu+1}) \, 
\prod_{k=1}^\mu (\omega_{\mu+2}-\omega_k)} \, ,\\
\end{align*}
and the Puiseux expansion of the eigenvalues $\kappa_\ell$ give
$$
|\kappa_{\mu+1}(\eps)|^2 -1 \sim \zeta_1 \, (\overline{\omega_{\mu+1}} +\omega_{\mu+1}) \, 
\eps^{1/m} \, ,\quad 
\overline{\kappa_{\mu+1}(\eps)} \, \kappa_{\mu+2}(\eps)-1 \sim \zeta_1 \, 
(\overline{\omega_{\mu+1}} +\omega_{\mu+2}) \, \eps^{1/m} \, .
$$
Moreover, the eigenvectors ${\mathcal T}_{\mu+1}(\eps)$ and ${\mathcal T}_{\mu+2}(\eps)$ share the 
same finite non zero limit as $\eps$ tends to zero. Simplifying the previous expression of ${\mathcal W}_1$ 
by the non-zero quantity $\zeta_1 \, (\omega_{\mu+2}-\omega_{\mu+1})$, we obtain that there exists a 
non-zero vector ${\bf W}$ in $\C^{p_1+r_1}$ such that
$$
\eps^{(\mu+3/2)/m} \, {\mathcal W}_1 \rightarrow \left( 
\dfrac{1}{(\overline{\omega_{\mu+1}} +\omega_{\mu+1}) \, \prod_{k=1}^\mu (\omega_{\mu+1}-\omega_k)} 
-\dfrac{1}{(\overline{\omega_{\mu+1}} +\omega_{\mu+2}) \, \prod_{k=1}^\mu (\omega_{\mu+2}-\omega_k)} 
\right) \, {\bf W} \, . 
$$
Since $2\, \mu+3$ is larger than $m$, we shall obtain a contradiction as in the previous simpler analysis 
provided that we can show that the quantity
\begin{equation}
\label{argfinal}
(\overline{\omega_{\mu+1}} +\omega_{\mu+1}) \, \prod_{k=1}^\mu (\omega_{\mu+1}-\omega_k) 
-(\overline{\omega_{\mu+1}} +\omega_{\mu+2}) \, \prod_{k=1}^\mu (\omega_{\mu+2}-\omega_k) \, ,
\end{equation}
is non-zero.

Let us recall a little the situation here. The $\omega_\ell$, $\ell=1,\dots,\mu$ are the $m$-th roots of unity 
for which $\kappa_\ell$ is a stable eigenvalue. This corresponds to those $m$-th roots of unity for which 
$\zeta_1 \, \omega_\ell$ has negative real part (recall that $\zeta_1$ is a non-zero real number). In 
particular, the set $\{ \omega_1,\dots,\omega_\mu \}$ is invariant by complex conjugation. Furthermore, 
$\omega_{\mu+1}$ and $\omega_{\mu+2}$ are any $m$-th roots of unity for which $\zeta_1 \, \omega_\ell$ 
has positive real part. In particular, one can always choose $\omega_{\mu+2}$ as the complex conjugate 
of $\omega_{\mu+1}$. In that case, \eqref{argfinal} reduces to showing
$$
(\overline{\omega_{\mu+1}} +\omega_{\mu+1}) \, \prod_{k=1}^\mu (\omega_{\mu+1}-\omega_k) 
-2 \, \overline{\omega_{\mu+1}} \, \overline{\prod_{k=1}^\mu (\omega_{\mu+1}-\omega_k)} \neq 0 \, ,
$$
and a sufficient condition for this to happen is
\begin{equation}
\label{argfinal'}
\big( \text{\rm Im } \omega_{\mu+1} \big) \, \text{\rm Im} \left( 
\prod_{k=1}^\mu (\omega_{\mu+1}-\omega_k) \right) \neq 0 \, .
\end{equation}
If $\zeta_1$ is negative, then necessarily $m$ is of the form $3+4\, M$, $M\in \N$, and $\mu=1+2\, M$. 
We then choose
$$
\omega_{\mu+1} :=\exp \left( 2\, i \, \pi \dfrac{M+1}{4\, M+3} \right) \, ,
$$
and one can rather easily check that the condition \eqref{argfinal'} is satisfied. A similar calculation yields 
\eqref{argfinal'} if $\zeta_1$ is positive (in that case $m$ is of the form $5+4\, M$, $M\in \N$).

\section{Examples}
\label{sect:examples}

\subsection{Numerical schemes for the transport equation}

In this first Paragraph, the underlying partial differential equation we consider is a one-dimensional 
transport equation:
$$
\partial_t v +a \, \partial_x v =0 \, ,
$$
with $a \neq 0$ a fixed velocity. We explain how the theory developed in this article applies to two 
possible discretizations of that equation, namely the Lax-Wendroff and leap-frog schemes, both of 
which being second order in time and space (at least for sufficiently smooth solutions).

\paragraph{The Lax-Wendroff scheme.}

Given some time and space steps $\Delta t, \Delta x$, and letting for simplicity $\mu$ denote the 
dimensionless parameter
$$
\mu := \dfrac{\Delta t}{\Delta x} \, a \neq 0 \, ,
$$
the Lax-Wendroff scheme reads:
\begin{equation}
\label{lw}
\begin{cases}
u_j^{n+1} -\dfrac{\mu}{2} \, (1+\mu) \, u_{j-1}^n +(\mu^2-1) \, u_j^n 
+\dfrac{\mu}{2} \, (1-\mu) \, u_{j+1}^n =0 \, ,& n \in \N \, ,\, j \in \Z \, ,\\
u^0 =f \in \ell^2(\Z) \, .
\end{cases}
\end{equation}
The scheme \eqref{lw} fits into the framework of \eqref{cauchy} with $s=0$, $Q_1=I$ (the scheme is 
explicit), and
$$
a_{-1,0}=-\dfrac{\mu}{2} \, (1+\mu) \, ,\quad a_{0,0} =\mu^2-1 \, ,\quad 
a_{1,0} =\dfrac{\mu}{2} \, (1-\mu) \, .
$$
The stencil of the scheme \eqref{lw} depends on whether $|\mu|=1$. To avoid dealing with particular 
cases, we therefore assume $|\mu| \neq 1$, and therefore $p=r=1$ in \eqref{lw}. Then it is a rather 
well-known fact, see e.g. \cite{gko}, that the Lax-Wendroff scheme satisfies the stability Assumption 
\ref{assumption1} if and only if $|\mu|<1$. Since $s=0$, the stability estimate \eqref{bornestabilitecauchy} 
is even satisfied with the constant $1$ (the $\ell^2$ norm of the solution to \eqref{lw} is nonincreasing 
with respect to $n$). We thus fix from now on a positive constant $\lambda$ such that $\lambda \, |a|<1$. 
The set $\bdelta$ of discretization parameters is
$$
\bdelta := \big\{ (\Delta t,\Delta t/\lambda) \, ,\quad \Delta t \in (0,1] \big\} \, ,
$$
and we consider the iteration \eqref{lw} with $\mu :=\lambda\, a$. The coefficients in \eqref{lw} are 
therefore independent of the discretization parameters $(\Delta t,\Delta x) \in \bdelta$. Because 
$a_{-1,0}$ and $a_{1,0}$ are nonzero, both Assumptions \ref{assumption0} and Assumption 
\ref{assumption3} are satisfied. We have also seen that Assumption \ref{assumption1} is satisfied 
thanks to our restriction on $\lambda$. The definition \eqref{defA-d} reduces here to
$$
a_{-1}(z)=a_{-1,0} \neq 0 \, ,\quad a_1(z)=a_{1,0} \neq 0\, ,
$$
and therefore Assumption \ref{assumption2'} is satisfied (the strong version of the non-characteristic 
boundary assumption). We are now going to verify that the Lax-Wendroff scheme is non-glancing. 
The amplification matrix ${\mathcal A}$ in \eqref{defA} reduces here to the complex number
$$
\forall \, \kappa \in \C \setminus \{ 0\} \, ,\quad {\mathcal A}(\kappa) 
=\dfrac{\mu}{2} \, (1+\mu) \, \kappa^{-1} -\dfrac{\mu}{2} \, (1-\mu) \, \kappa +1-\mu^2 \, .
$$
We thus compute
$$
\forall \, \xi \in \R \, ,\quad |{\mathcal A}({\rm e}^{i\, \xi})|^2 =1-4\, \mu^2 \, (1-\mu^2) \, \sin^4 \dfrac{\xi}{2} 
\le 1 \, .
$$
Therefore the only $\kappa \in \cercle$ for which ${\mathcal A}(\kappa)$ has modulus $1$ is $\kappa=1$, 
but then ${\mathcal A}'(1)=-\mu$ is nonzero. The Lax-Wendroff scheme \eqref{lw} is therefore 
non-glancing\footnote{This is actually a consequence here of the fact that the scheme is dissipative of 
order $4$, see \cite[Chapter 5]{gko}.} and we can apply Theorem \ref{thm2} when the computational 
domain $\Z$ is truncated on one side with the transparent numerical boundary conditions which we 
now make explicit.

We examine the transparent boundary condition at $j=0$ associated with \eqref{lw}. For convenience, 
we adopt here the formulation \eqref{transparent1} using a linear form for describing the unstable 
subspace $\E^u(z)$ of the matrix $\M(z)$ in \eqref{defM11}. Let us recall that in \eqref{transparent1}, 
the coefficients $\widetilde{\kappa}_{u,n}$ correspond to the Laurent series of $\kappa_u^{-1}$, where 
$\kappa_u(z) \in \U$ is the unstable root to the equation
$$
a_{-1}(z) +a_0(z) \, \kappa +a_1(z) \, \kappa^2 =0 \, .
$$
For the Lax-Wendroff scheme \eqref{lw}, $\kappa_u(z)^{-1}$ satisfies the equation
$$
\forall \, z \in \U \, ,\quad 
-\mu \, (1+\mu) \, \kappa_u(z)^{-2} +2 \, (z+\mu^2-1) \, \kappa_u(z)^{-1} +\mu \, (1-\mu) =0 \, .
$$
We plug the series $\sum_{n \ge 1} \widetilde{\kappa}_{u,n} \, z^{-n}$ in the latter equation and identify 
inductively the coefficients, which yields:
\begin{align}
\forall \, n \ge 2 \, ,\quad \widetilde{\kappa}_{u,n+1} &= (1-\mu^2) \, \widetilde{\kappa}_{u,n}
+\dfrac{1}{2} \, \mu \, (1+\mu) \, \sum_{m=1}^{n-1} \widetilde{\kappa}_{u,m} \, \widetilde{\kappa}_{u,n-m}
\, ,\label{coeffslw} \\
\text{\rm with } \quad \widetilde{\kappa}_{u,1} &=-\dfrac{1}{2} \, \mu \, (1-\mu) \, ,\quad 
\widetilde{\kappa}_{u,2} =(1-\mu^2) \, \widetilde{\kappa}_{u,1} \, .\notag
\end{align}
From the relation $\kappa_s(z) \, \kappa_u(z) =-(1+\mu)/(1-\mu)$, we also get the Laurent series expansion 
of $\kappa_s$:
\begin{equation}
\label{coeffslw'}
\kappa_s(z) =\sum_{n \ge 1} \dfrac{\kappa_{s,n}}{z^n} \, ,\quad \kappa_{s,n} := -\dfrac{1+\mu}{1-\mu} \, 
\widetilde{\kappa}_{u,n} \, .
\end{equation}
The stable eigenvalue $\kappa_s(z)$ is used to write the transparent numerical boundary condition at 
the right end $j=J+1$ of the computational domain.

If one thus  truncates the computation domain $\Z$ on both sides and therefore reduces to an interval 
$[0,J+1]$, the resulting numerical scheme reads
\begin{equation*}
\begin{cases}
u_j^{n+1} -\dfrac{\mu}{2} \, (1+\mu) \, u_{j-1}^n +(\mu^2-1) \, u_j^n 
+\dfrac{\mu}{2} \, (1-\mu) \, u_{j+1}^n =0 \, ,& n \in \N \, , \, j=1,\dots,J \, ,\\
u_0^{n+1} ={\dps \sum_{m=0}^n} \widetilde{\kappa}_{u,n+1-m} \, u_1^m \, ,& n \in \N \, ,\\
u_{J+1}^{n+1} ={\dps \sum_{m=0}^n} \kappa_{s,n+1-m} \, u_J^m \, ,& n \in \N \, ,
\end{cases}
\end{equation*}
with coefficients $\widetilde{\kappa}_{u,n}, \kappa_{s,n}$ defined in \eqref{coeffslw}, \eqref{coeffslw'}, and 
some given initial condition $(u_0^0,\dots,u_{J+1}^0)^T \in \R^{J+2}$. The above numerical scheme is 
rather easily implemented thanks to the recursive formula for the coefficients $\widetilde{\kappa}_{u,n}$ 
and $\kappa_{s,n}$. But of course there are easier and efficient {\it local} strategies that are based on 
absorbing boundary conditions and that may work quite as well, see e.g. \cite{ehrhardt,goldberg}.

\paragraph{The leap-frog scheme.}

Given some time and space steps $\Delta t, \Delta x$, and letting again for simplicity $\mu$ denote the 
dimensionless parameter
$$
\mu := \dfrac{\Delta t}{\Delta x} \, a \, ,
$$
the leap-frog scheme reads:
\begin{equation}
\label{lf}
\begin{cases}
u_j^{n+2} +\mu \, (u_{j+1}^{n+1}-u_{j-1}^{n+1}) -u_j^n =0 \, ,& n \in \N \, ,\, j \in \Z \, ,\\
(u^0,u^1) =(f^0,f^1) \in \ell^2(\Z)^2 \, .
\end{cases}
\end{equation}
The scheme \eqref{lf} fits into the framework of \eqref{cauchy} with $s=1$, $Q_2=-Q_0=I$ (the scheme is 
explicit), and
$$
a_{-1,1}=-\mu \, ,\quad a_{0,0} =0 \, ,\quad a_{1,0} =\mu \, ,\quad p=r=1 \, .
$$
Assumptions \ref{assumption0} and \ref{assumption3} are thus satisfied. It is also a standard result that 
the leap-frog scheme is $\ell^2$ stable if and only if $|\mu|<1$, see \cite{RM}. In that case, Assumption 
\ref{assumption1} is satisfied with a constant $C$ that only depends on $\mu$. From the above expression 
of the coefficients, we can also easily check that Assumption \ref{assumption2'} is satisfied.

The amplification matrix ${\mathcal A}$ in \eqref{defA} reads
$$
{\mathcal A}(\kappa) =\begin{pmatrix}
-\mu \, (\kappa-\kappa^{-1}) & 1 \\
1 & 0 \end{pmatrix} \, .
$$
The eigenvalues of ${\mathcal A}({\rm e}^{i\, \xi})$, $\xi \in \R$, are
$$
\pm \sqrt{1-\mu^2 \, \sin^2 \xi} -i \, \mu \, \sin \xi \in \cercle \, .
$$
The derivative of these functions with respect to $\xi$ vanishes when $\xi-\pi/2$ belongs to $\Z \, \pi$, which 
means that the leap-frog scheme admits glancing wave packets, which prevents from applying Theorem 
\ref{thm2}. We can nevertheless derive the transparent boundary conditions by computing the roots 
$\kappa_s(z) \in \D$ and $\kappa_u(z) \in \U$ to the equation
$$
-\mu \, z \, \kappa^{-1} +(z^2-1) +\mu \, z \, \kappa =0 \, ,\quad z \in \U \, .
$$
In particular, the inverse of the unstable root has the following Laurent series expansion
$$
\kappa_u(z)^{-1} =\sum_{n \ge 1} \widetilde{\kappa}_{u,n} \, z^{-n} \, ,
$$
with:
\begin{align*}
\forall \, n \ge 2 \, ,\quad \widetilde{\kappa}_{u,n+1} &= \widetilde{\kappa}_{u,n-1} 
-\mu \, \sum_{m=1}^{n-1} \widetilde{\kappa}_{u,m} \, \widetilde{\kappa}_{u,n-m} \, , \\
\widetilde{\kappa}_{u,1} &= \mu \, ,\quad \widetilde{\kappa}_{u,2} =0 \, .
\end{align*}
The stable root $\kappa_s(z)$ coincides with $-\kappa_u(z)^{-1}$. We can therefore truncate the computation 
domain $\Z$ and implement the numerical scheme
\begin{equation*}
\begin{cases}
u_j^{n+2} -\mu \, (u_{j+1}^{n+1}-u_{j-1}^{n+1}) -u_j^n =0 \, ,& n \in \N \, , \, j=1,\dots,J \, ,\\
u_0^{n+2} ={\dps \sum_{m=0}^n} \widetilde{\kappa}_{u,n+1-m} \, u_1^m \, ,& n \in \N \, ,\\
u_{J+1}^{n+2} ={\dps \sum_{m=0}^n} \kappa_{s,n+1-m} \, u_J^m \, ,& n \in \N \, ,
\end{cases}
\end{equation*}
with any given couple of initial data $(u_0^0,\dots,u_{J+1}^0)^T, (u_0^1,\dots,u_{J+1}^1)^T \in \R^{J+2}$.

\subsection{Numerical schemes for the heat equation}

In this Paragraph, the underlying partial differential equation we consider is the one-dimensional 
heat equation:
$$
\partial_t v -d \, \partial^2_{xx} v =0 \, ,
$$
with $d>0$ the diffusion coefficient. We review our derivation of the transparent boundary condition both 
for the (more than) classical explicit scheme
\begin{equation}
\label{heat1}
\begin{cases}
u_j^{n+1} -\dfrac{d \, \Delta t}{\Delta x^2} \, u_{j-1}^n +2\, \dfrac{d \, \Delta t}{\Delta x^2} \, u_j^n 
-\dfrac{d \, \Delta t}{\Delta x^2} \, u_{j+1}^n =0 \, ,& n \in \N \, ,\, j \in \Z \, ,\\
u^0 =f \in \ell^2(\Z) \, ,
\end{cases}
\end{equation}
and for the implicit scheme based on the BDF2 quadrature rule (see \cite{hnw}):
\begin{equation}
\label{heat2}
\begin{cases}
\left( \dfrac{3}{2} +2\, \dfrac{d \, \Delta t}{\Delta x^2} \right) \, u_j^{n+2} 
-\dfrac{d \, \Delta t}{\Delta x^2} \, u_{j-1}^{n+2} -\dfrac{d \, \Delta t}{\Delta x^2} \, u_{j+1}^{n+2} 
-2 \, u_j^{n+1} +\dfrac{1}{2} \, u_j^n =0 \, ,& n \in \N \, ,\, j \in \Z \, ,\\
(u^0,u^1) =(f^0,f^1) \in \ell^2(\Z)^2 \, .
\end{cases}
\end{equation}
The scheme \eqref{heat1} is of first order in time and second order in space, while \eqref{heat2} is 
second order in both time and space (again for sufficiently smooth solutions).

Let us first deal with \eqref{heat1}, which, in the framework of \eqref{cauchy}, corresponds to $s=0$, 
$Q_1=I$ (the scheme is explicit) and
$$
a_{-1,0}(\Delta t,\Delta x)=a_{1,0}(\Delta t,\Delta x) :=-\dfrac{d \, \Delta t}{\Delta x^2} \, ,\quad 
a_{0,0}(\Delta t,\Delta x) :=2\, \dfrac{d \, \Delta t}{\Delta x^2} \, .
$$
We emphasize here the dependence of the coefficients on $\Delta t$ and $\Delta x$, though 
of course they only depend on the ratio $\Delta t/\Delta x^2$. The numerical scheme satisfies 
Assumptions \ref{assumption0}, \ref{assumption2} and \ref{assumption3} (it even satisfies the 
stronger Assumption \ref{assumption2'} though we shall not make much use of this fact). As far 
as the stability Assumption \ref{assumption1} is concerned, it is satisfied if and only if there holds 
$d\, \Delta t/\Delta x^2 \le 1$, which can be readily seen by applying the Fourier transform \cite{RM} 
(see also \cite{strang1} for an alternative explanation of this stability condition based on Bernstein's 
inequality). The admissible set of discretization parameters is therefore
$$
\bdelta := \big\{ (\Delta t,\Delta x) \in (0,1]^2 \, / \, d \, \Delta t \le \Delta x^2 \} \, .
$$

For the scheme \eqref{heat1}, the derivation of the transparent boundary conditions is based on 
the analysis of the polynomial equation:
$$
a_{-1,0}(\Delta t,\Delta x) +(z+a_{0,0}(\Delta t,\Delta x)) \, \kappa +a_{1,0}(\Delta t,\Delta x) \, \kappa^2 
=0 \, ,
$$
which, in view of the definition of the coefficients $a_{\ell,0}(\Delta t,\Delta x)$, amounts to
$$
\mu \, (\kappa-1)^2 =z \, \kappa \, ,\quad \mu := \dfrac{d \, \Delta t}{\Delta x^2} \in (0,1] \, .
$$
For $z \in \U$, this equation has one root $\kappa_s(z) \in \D$ and one root $\kappa_u(z) \in \U$, 
both of which depend holomorphically on $z$. The Laurent series expansion of $\kappa_u(z)^{-1}$ 
reads
$$
\kappa_u(z)^{-1} =\sum_{n \ge 1} \widetilde{\kappa}_{u,n} \, z^{-n} \, ,
$$
with:
\begin{align*}
\forall \, n \ge 2 \, ,\quad \widetilde{\kappa}_{u,n+1} &=-2\, \mu \, \widetilde{\kappa}_{u,n} 
+\mu \, \sum_{m=1}^{n-1} \widetilde{\kappa}_{u,m} \, \widetilde{\kappa}_{u,n-m} \, , \\
\widetilde{\kappa}_{u,1} &= \mu \, ,\quad \widetilde{\kappa}_{u,2} =-2\, \mu^2 \, .
\end{align*}
The stable root $\kappa_s(z)$ coincides with $\kappa_u(z)^{-1}$ so we use the convention $\kappa_{s,n} 
:=\widetilde{\kappa}_{u,n}$ in the numerical scheme just below. We can truncate the computation domain 
$\Z$ in \eqref{heat1} and implement the numerical scheme
\begin{equation*}
\begin{cases}
u_j^{n+1} -\mu \, (u_{j-1}^n -2 \, u_j^n +u_{j+1}^n) =0 \, ,& n \in \N \, , \, j=1,\dots,J \, ,\\
u_0^{n+1} ={\dps \sum_{m=0}^n} \widetilde{\kappa}_{u,n+1-m} \, u_1^m \, ,& n \in \N \, ,\\
u_{J+1}^{n+1} ={\dps \sum_{m=0}^n} \kappa_{s,n+1-m} \, u_J^m \, ,& n \in \N \, ,
\end{cases}
\end{equation*}
with $\mu :=d \, \Delta t/\Delta x^2$ and given initial data $(u_0^0,\dots,u_{J+1}^0)^T \in \R^{J+2}$.

We now consider the implicit scheme \eqref{heat2}. Since the scheme is implicit, we first need to 
determine whether it is well-defined, that is whether Assumption \ref{assumption0} is satisfied. 
We have
$$
\forall \, \kappa \in \C \setminus \{ 0\} \, ,\quad 
\widehat{Q_2}(\kappa) =\dfrac{3}{2} +2\, \mu  -\mu \, (\kappa+\kappa^{-1}) \, ,
$$
and it is therefore immediate to verify that $\widehat{Q_2}$ does not vanish on $\cercle$. Furthermore, 
$\widehat{Q_2}$ has exactly two zeroes that are real (recall $\mu>0$); one is located in the interval 
$(0,1)$ and the remaining one in $(1,+\infty)$. (Actually, one is the inverse of the other.) By the residue 
Theorem, the index condition \eqref{index} is satisfied. Let us now verify Assumption \ref{assumption1}. 
Following \cite{Emmrich1,Emmrich2}, we are going to use the so-called $G$-stability of the BDF2 quadrature 
rule, see \cite[Chapter V.6]{hw}. We mulitply the recurrence relation in \eqref{heat2} by $\Delta x \, u_j^{n+2}$ 
and sum over $j \in \Z$. Using the identity
\begin{multline*}
4\, u_j^{n+2} \, \left( \dfrac{3}{2} \, u_j^{n+2} -2\, u_j^{n+1} +\dfrac{1}{2} \, u_j^n \right) \\
=(u_j^{n+2})^2 +(2\, u_j^{n+2} -u_j^{n+1})^2 -(u_j^{n+1})^2 -(2\, u_j^{n+1} -u_j^n)^2 
+(u_j^{n+2}-2\, u_j^{n+1}+u_j^n)^2 \, ,
\end{multline*}
and discrete integration by parts in $j$, we end up with
\begin{multline*}
\Ng u^{n+2} \Nd_{-\infty,+\infty}^2 +\Ng 2\, u^{n+2} -u^{n+1} \Nd_{-\infty,+\infty}^2 
-\Ng u^{n+1} \Nd_{-\infty,+\infty}^2 -\Ng 2\, u^{n+1} -u^n \Nd_{-\infty,+\infty}^2 \\
=-\dfrac{1}{4} \, \Ng u^{n+2} -2\, u^{n+1} +u^n \Nd_{-\infty,+\infty}^2 
-\dfrac{\mu}{4} \,  \Ng ({\bf S}-I) \, u^{n+2} \Nd_{-\infty,+\infty}^2 \le 0 \, .
\end{multline*}
In other words, the energy
$$
E^n := \Ng u^{n+1} \Nd_{-\infty,+\infty}^2 +\Ng 2\, u^{n+1} -u^n \Nd_{-\infty,+\infty}^2 \, ,
$$
is nonincreasing for solutions to \eqref{heat2}, independently of $\mu>0$, and this proves that 
Assumption \ref{assumption1} is satisfied with all possible discretization parameters $(\Delta t,\Delta x)$ 
(that is for $\bdelta :=(0,1]^2$, and the corresponding constant $C$ in \eqref{bornestabilitecauchy} is 
independent of $(\Delta t,\Delta x) \in \bdelta$). We also compute
$$
a_{-1}(z) =-\mu \, z^2 \, ,\quad a_1(z) =-\mu \, z^2 \, ,
$$
so Assumptions \ref{assumption2} and \ref{assumption3} are satisfied. We can thus proceed with the 
construction of transparent boundary conditions for \eqref{heat2}. The equation of interest reads
$$
\kappa^2 -\dfrac{1}{\mu \, z^2} \, \left\{ \left( \dfrac{3}{2} +2\, \mu \right) \, z^2 -2\, z +\dfrac{1}{2} \right\} 
\, \kappa +1 =0 \, .
$$
When $z$ belongs to $\U$, it has one root $\kappa_u(z) \in \U$ and one root $\kappa_s(z) =\kappa_u(z)^{-1} 
\in \D$. We determine the Laurent series expansion of $\kappa_s$:
$$
\kappa_s(z) =\sum_{n \ge 0} \dfrac{\kappa_{s,n}}{z^n} \, .
$$
Plugging this series in the polynomial equation satisfied by $\kappa_s$, we end up with the recursive 
relations:
\begin{align*}
\kappa_{s,0}^2 -\dfrac{1}{\mu} \, \left( \dfrac{3}{2} +2\, \mu \right) \, \kappa_{s,0} +1 &=0 \, ,\quad 
\kappa_{s,0} \in (0,1) \, , \\
\left( 2\, \kappa_{s,0} -\dfrac{3}{2\, \mu} -2 \right) \, \kappa_{s,1} &=-\dfrac{2\, \kappa_{s,0}}{\mu} \, , \\
\forall \, n \ge 2 \, ,\quad \left( 2\, \kappa_{s,0} -\dfrac{3}{2\, \mu} -2 \right) \, \kappa_{s,n} &= 
-\dfrac{2\, \kappa_{s,n-1}}{\mu} +\dfrac{\kappa_{s,n-2}}{2\, \mu} 
-\sum_{m=1}^{n-1} \kappa_{s,m} \, \kappa_{s,n-m} \, .
\end{align*}
It should be noted that a crucial fact that we use here is that $\kappa_{s,0}$ is a simple root of 
the equation $\widehat{Q_2}(\kappa)=0$, which enables us indeed to determine the sequence 
$(\kappa_{s,n})$ inductively.

Since the Laurent series expansion of $\kappa_s$ and $\kappa_u^{-1}$ coincide, the truncation of 
\eqref{heat2} on a finite interval $[0,J+1]$ reads
\begin{equation*}
\begin{cases}
\left( \dfrac{3}{2} +2\, \dfrac{d \, \Delta t}{\Delta x^2} \right) \, u_j^{n+2} 
-\dfrac{d \, \Delta t}{\Delta x^2} \, u_{j-1}^{n+2} -\dfrac{d \, \Delta t}{\Delta x^2} \, u_{j+1}^{n+2} 
-2 \, u_j^{n+1} +\dfrac{1}{2} \, u_j^n =0 \, ,& n \in \N \, ,\, j=1,\dots,J \, ,\\
u_0^{n+1} -\kappa_{s,0} \, u_1^{n+1}={\dps \sum_{m=0}^n} \kappa_{s,n+1-m} \, u_1^m 
\, ,& n \in \N \, ,\\
u_{J+1}^{n+1} -\kappa_{s,0} \, u_J^{n+1} ={\dps \sum_{m=0}^n} \kappa_{s,n+1-m} \, u_J^m \, ,& 
n \in \N \, ,
\end{cases}
\end{equation*}
with the previous recursive definition for the $\kappa_{s,n}$'s, and any couple of initial conditions 
$(u_0^0,\dots,u_{J+1}^0)^T$, $(u_0^1,\dots,u_{J+1}^1)^T \in \R^{J+2}$.

\subsection{Numerical schemes for dispersive equations}

\paragraph{The two-dimensional Schr\"odinger equation}

We consider the two-dimensional linear Schr\"odinger equation
$$
i \, \partial_t v +\dfrac{1}{2} \, \Delta_x v =0 \, ,\quad (t,x) \in \R \times \R^2 \, .
$$
We consider the numerical scheme proposed in \cite{ehrhardt-arnold} that is based on a centered 
second order differentiation in space and the Crank-Nicolson quadrature rule. This yields the numerical 
scheme:
\begin{multline}
\label{cnSchrod}
i \, \dfrac{u_{j_1,j_2}^{n+1} -u_{j_1,j_2}^n}{\Delta t} +\dfrac{1}{4 \, \Delta x_1^2} \Big( 
u_{j_1+1,j_2}^{n+1}-2\, u_{j_1,j_2}^{n+1} +u_{j_1-1,j_2}^{n+1} 
+u_{j_1+1,j_2}^n-2\, u_{j_1,j_2}^n +u_{j_1-1,j_2}^n \Big) \\
+\dfrac{1}{4 \, \Delta x_2^2} \Big( 
u_{j_1,j_2+1}^{n+1}-2\, u_{j_1,j_2}^{n+1} +u_{j_1,j_2-1}^{n+1} 
+u_{j_1,j_2+1}^n-2\, u_{j_1,j_2}^n +u_{j_1,j_2-1}^n \Big) =0 \, ,
\end{multline}
with some given initial condition $u^0 \in \ell^2(\Z^2;\C)$. For future use, we introduce the positive 
parameters:
$$
\mu_1 := \dfrac{\Delta t}{4 \, \Delta x_1^2} \, ,\quad \mu_2 := \dfrac{\Delta t}{4 \, \Delta x_2^2} \, .
$$
The scheme \eqref{cnSchrod} fits into the framework of \eqref{cauchy} with $p_1=p_2=r_1=r_2=1$, and 
with the operators
\begin{align*}
Q_1 &:=i\, I +\mu_1 \, ({\bf S}_1 +{\bf S}_1^{-1} -2\, I) +\mu_2 \, ({\bf S}_2 +{\bf S}_2^{-1} -2\, I) \, , \\
Q_0 &:=-i\, I +\mu_1 \, ({\bf S}_1 +{\bf S}_1^{-1} -2\, I) +\mu_2 \, ({\bf S}_2 +{\bf S}_2^{-1} -2\, I) \, .
\end{align*}
In particular, there holds
$$
\widehat{Q_1}({\rm e}^{i\, \eta_1},{\rm e}^{i\, \eta_2}) 
= i -4\, \mu_1 \, \sin^2 \dfrac{\eta_1}{2} -4\, \mu_2 \, \sin^2 \dfrac{\eta_2}{2} \, ,
$$
so that not only $\widehat{Q_1}({\rm e}^{i\, \eta_1},{\rm e}^{i\, \eta_2})$ is nonzero (that is $Q_1$ is an 
isomorphism on $\ell^2$), but $\widehat{Q_1}(\cdot,{\rm e}^{i\, \eta_2})$ maps the unit circle $\cercle$ 
into the upper half-plane $\{ \zeta \in \C \, , \, \text{\rm Im } \zeta >0 \}$. Hence we can write
$$
\dfrac{1}{2\, i \, \pi} \, \int_{\cercle} 
\dfrac{\partial_{\kappa_1} \widehat{Q_1}(\kappa_1,{\rm e}^{i\, \eta_2})}
{\widehat{Q_1}(\kappa_1,{\rm e}^{i\, \eta_2})} \, {\rm d}\kappa_1 
=\dfrac{1}{2\, i \, \pi} \, \int_{\cercle} \partial_{\kappa_1} \big( \ln \widehat{Q_1}(\kappa_1,{\rm e}^{i\, \eta_2}) 
\big) \, {\rm d}\kappa_1 =0 \, ,
$$
where we have used the principal determination of the logarithm. Hence Assumption \ref{assumption0} 
is satisfied. It is a rather standard property that \eqref{cnSchrod} preserves the $\ell^2$ norm, so 
Assumption \ref{assumption1} is satisfied with the maximal set of discretization parameters $\bdelta 
:= (0,1]^3$ (the constant $C$ in \eqref{bornestabilitecauchy} can be chosen to be $1$). With the 
definition \eqref{defA-d}, we compute (here $\bfeta=\eta_2$ belongs to $\R$ so we rather use the 
more explicit notation $\eta_2$):
$$
a_{-1}(z,\eta_2) =a_1(z,\eta_2) =(z+1) \, \mu_1 \, ,
$$
so Assumption \ref{assumption2} is satisfied (but Assumption \ref{assumption2'} is not !). We can also 
easily check that Assumption \ref{assumption3} is satisfied since $a_{-1}$ and $a_1$ have degree $1$ 
in $z$ for all $\eta_2$. The derivation of transparent numerical boundary conditions for \eqref{cnSchrod} 
was performed in \cite{ehrhardt-arnold} so we shall not reproduce it here. Approximate (namely, 
absorbing) numerical boundary conditions for \eqref{cnSchrod} are proposed and studied in 
\cite{ehrhardt-arnold,AES,aabes,ducomet-zlotnik}. We also refer to \cite{szeftel1,aabes} and references 
therein for the construction of absorbing boundary conditions for the nonlinear Schr\"odinger equation.

\paragraph{The Airy equation}

We are now going back to a one-dimensional problem and consider as in \cite{zheng-wen-han} the 
Airy equation
$$
\partial_t v +\partial_{xxx}^3 v =0 \, .
$$
The extension to a nonzero first order transport term is considered in \cite{BELV} in view of later 
dealing with the Korteweg - de Vries equation. For simplicity, we restrict to this simple framework 
($U_1=0$ and $U_2=1$ in the notation of \cite{BELV}) and explain how one of the schemes 
considered in \cite{BELV} fits into our framework. More precisely, we consider the so-called 
`rightside Crank-Nicolson' scheme proposed in \cite{Qin}:
\begin{equation}
\label{cnAiry}
\dfrac{u_j^{n+1} -u_j^n}{\Delta t} +\dfrac{1}{2 \, \Delta x^3} \Big( 
u_{j+2}^{n+1} -3\, u_{j+1}^{n+1} +3\, u_j^{n+1} -u_{j-1}^{n+1} 
+u_{j+2}^n -3\, u_{j+1}^n +3\, u_j^n -u_{j-1}^n \Big) =0 \, ,
\end{equation}
with some given initial condition $u^0 \in \ell^2$. The other scheme considered in \cite{BELV} 
is centered in space so the stability analysis is identical to the above one for the discretized 
Schr\"odinger equation. With the notation
$$
\mu := \dfrac{\Delta t}{2 \, \Delta x^3} >0 \, ,
$$
the numerical scheme \eqref{cnAiry} fits into the framework of \eqref{cauchy} with $p=2$, $r=1$, and
\begin{equation*}
Q_1 :=I +\mu \, ({\bf S}^2 -3\, {\bf S}_1 +3\, I -{\bf S}^{-1}) \, ,\quad 
Q_0 :=-I +\mu \, ({\bf S}^2 -3\, {\bf S}_1 +3\, I -{\bf S}^{-1}) \, .
\end{equation*}
In particular, there holds
$$
\widehat{Q_1}({\rm e}^{i\, \xi}) =1+8\, \mu \, \sin^4 \dfrac{\xi}{2} -4\, i \, \mu \, \sin \xi \, \sin^2 \dfrac{\xi}{2} \, .
$$
Hence not only $\widehat{Q_1}({\rm e}^{i\, \xi})$ is nonzero but its real part is not smaller than $1$. 
In particular, we can apply the same argument as above for the discretized Schr\"odinger equation 
and use the principal determination of the logarithm to show that $\widehat{Q_1}$ satisfies the index 
condition \eqref{index}. It is also proved in \cite{Qin} that the scheme \eqref{cnAiry} is stable, that is, 
it satisfies Assumption \ref{assumption1} with all possible discretization parameters, that is with 
$\bdelta :=(0,1]^2$. Since \eqref{cnAiry} is based on the Crank-Nicolson quadrature rule, there 
is no difficulty in verifying that Assumptions \ref{assumption2} and \ref{assumption3} are satisfied. 
Again Assumption \ref{assumption2'} is not satisfied. We refer to \cite{BELV} for a derivation of the 
transparent boundary conditions for the scheme \eqref{cnAiry}. In particular, we recover here in our 
general framework the separation property for the roots $\kappa(z)$ (Theorem 3.1 in \cite{BELV}). 
The analysis in \cite{BELV} makes clear how, for this case with $p=2$, one can write transparent 
boundary conditions by using linear forms rather than projectors. This requires in \cite{BELV} using 
some suitable combinations of the two unstable roots (named $\ell_2(z)$ and $\ell_3(z)$ here) in 
order to preserve holomorphy with respect to $z$ on $\U$.

\paragraph{The linearized Benjamin-Bona-Mahony equation}

We discuss eventually the linearized Benjamin-Bona-Mahony equation considered in the recent 
work \cite{BMN}. The partial differential equation under consideration reads
$$
\partial_t \, (u -\varepsilon \, \partial_{xx}^2 u) +c \, \partial_x u =0 \, ,
$$
with $\varepsilon>0$ and $c>0$ (the sign of $c$ is crucial in the `upwinding' procedure below). One 
numerical scheme proposed in \cite{BMN} is based on a centered difference for the dispersive part 
and on an upwind procedure for the transport part. Then one applies the Crank-Nicolson quadrature 
rule to integrate in time. The resulting scheme reads:
\begin{equation}
\label{cnBBM}
u_j^{n+1} -u_j^n -\dfrac{\varepsilon\, \Delta t}{2 \, \Delta x^2} \Big( 
u_{j+1}^{n+1} -2\, u_j^{n+1} +u_{j-1}^{n+1} -u_{j+1}^n +2\, u_j^n -u_{j-1}^n \Big) 
+\dfrac{c\, \Delta t}{2\, \Delta x} \, \Big( u_j^{n+1}-u_{j-1}^{n+1}+u_j^n -u_{j-1}^n \Big) =0 \, ,
\end{equation}
which fits into the framework of \eqref{cauchy} with $p=r=1$, and
$$
Q_1 = -\left( \dfrac{\varepsilon\, \Delta t}{2 \, \Delta x^2} +\dfrac{c\, \Delta t}{2\, \Delta x} \right) \, {\bf S}^{-1} 
+\left( 1+\dfrac{\varepsilon\, \Delta t}{\Delta x^2} +\dfrac{c\, \Delta t}{2\, \Delta x} \right) \, I 
-\dfrac{\varepsilon\, \Delta t}{2 \, \Delta x^2} \, {\bf S} \, .
$$
In particular, one computes
$$
\text{\rm Re } \widehat{Q_1}({\rm e}^{i\, \eta}) =1 
+2\, \dfrac{\varepsilon\, \Delta t}{\Delta x^2} \, \sin^2 \dfrac{\eta}{2} 
+\dfrac{c\, \Delta t}{\Delta x} \, \sin^2 \dfrac{\eta}{2} \ge 1 \, ,
$$
so not only $Q_1$ is an isomorphism on $\ell^2(\Z)$ but the index condition \eqref{index} is satisfied 
(we use the same argument as for the discretization of the Airy equation). Reproducing more or less 
the same computations as for \eqref{cnAiry}, one can also show that Assumption \ref{assumption1} 
is satisfied with all possible discretization parameters, that is with $\bdelta =(0,1]^2$. From the above 
expression of $Q_1$, one can also easily verify that Assumption \ref{assumption3} is satisfied.

We now compute
$$
a_{-1}(z) =-(z-1) \, \dfrac{\varepsilon\, \Delta t}{2 \, \Delta x^2} -(z+1) \, \dfrac{c\, \Delta t}{2\, \Delta x} 
\, ,\quad 
a_1(z) =-(z-1) \, \dfrac{\varepsilon\, \Delta t}{2 \, \Delta x^2} \, ,
$$
so obviously $a_1$ does not vanish on $\U$ (but it vanishes on $\Ubar$). The only root of $a_{-1}$ 
belongs to $\D$ so Assumption \ref{assumption2} is satisfied (but not Assumption \ref{assumption2'}). 
Hence the calculations made explicit in \cite{BMN} also fit in the general framework that we have 
discussed in this article.

\paragraph{Acknowledgments} The author warmly thanks Christophe Besse, Vincent Colin, Paolo Ghiggini, 
Fran\-\c{c}ois Laudenbach, Laurent Meersseman and Pascal Noble for very helpful and stimulating discussions 
related to this work.

\bibliographystyle{alpha}
\bibliography{Transparent}
\end{document}